\documentclass[12pt,reqno]{article}

\title{Annihilating branching Brownian motion}

\author{
Daniel Ahlberg\thanks{Department of Mathematics, Stockholm University, 
\href{mailto:daniel.ahlberg@math.su.se}{daniel.ahlberg@math.su.se}}
\and 
Omer Angel\thanks{Department of Mathematics, University of British Columbia, 
\href{mailto:angel@math.ubc.ca}{angel@math.ubc.ca}}
\and 
Brett Kolesnik\thanks{Department of Statistics, University of Oxford, 
\href{mailto:brett.kolesnik@stats.ox.ac.uk}{brett.kolesnik@stats.ox.ac.uk}}
}

\date{}

\usepackage[T1]{fontenc}
\pdfoutput=1
\usepackage{microtype,hyperref,graphicx,dsfont}
\usepackage{amsthm,amsmath,amssymb}
\usepackage{enumitem}
\usepackage[nameinlink,capitalise]{cleveref}
\usepackage[font=footnotesize,margin=2cm]{caption}
\usepackage{cite}
\urlstyle{same}
\usepackage{todonotes}

\makeatletter 
\renewcommand\@biblabel[1]{#1.} 
\makeatother

\usepackage[nameinlink]{cleveref}
\crefname{theorem}{Theorem}{Theorems}
\crefname{thm}{Theorem}{Theorems}
\crefname{mainthm}{Theorem}{Theorems}
\crefname{lemma}{Lemma}{Lemmas}
\crefname{lem}{Lemma}{Lemmas}
\crefname{remark}{Remark}{Remarks}
\crefname{claim}{Claim}{Claims}
\crefname{subclaim}{Sub-claim}{Sub-claims}
\crefname{prop}{Proposition}{Propositions}
\crefname{proposition}{Proposition}{Propositions}
\crefname{defn}{Definition}{Definitions}
\crefname{corollary}{Corollary}{Corollaries}
\crefname{conjecture}{Conjecture}{Conjectures}
\crefname{question}{Question}{Questions}
\crefname{chapter}{Chapter}{Chapters}
\crefname{section}{Section}{Sections}
\crefname{figure}{Figure}{Figures}

\theoremstyle{plain}

\newtheorem{thm}{Theorem}
\newtheorem*{thm*}{Theorem}
\newtheorem{lemma}[thm]{Lemma}

\newtheorem{prop}[thm]{Proposition}

\newtheorem{question}[thm]{Question}
\theoremstyle{definition}

\theoremstyle{remark}
\newtheorem*{remark}{Remark}
\numberwithin{equation}{section}

\renewcommand{\P}{\mathbb P}
\newcommand{\Z}{\mathbb Z}
\newcommand{\E}{\mathbb E}
\newcommand{\R}{\mathbb R}

\newcommand{\Q}{\mathbb Q}

\newcommand{\eps}{\varepsilon}

\newcommand{\cB}{\mathcal B}
\newcommand{\cC}{\mathcal C}

\newcommand{\cG}{\mathcal G}

\begin{document}

\maketitle

\begin{abstract}
We study an interacting system of competing particles on the real line.
Two populations of positive and negative particles evolve according to  
branching Brownian motion.
When opposing particles meet, 
their charges neutralize and the particles annihilate, 
as in an inert chemical reaction.
We show that, with positive probability, the two populations coexist and that, 
on this event, the interface is asymptotically linear with a random slope.
A variety of generalizations and open problems are discussed. 
\end{abstract}

\maketitle

\section{Introduction}

{\bf Branching Brownian motion} (BBM) is a classical 
model for population growth with diffusion, see, e.g., \cite{MK75,B78,B83,LS87}. 

In this work, we study a generalization of BBM, 
consisting of an 
interacting system of competing particles.
As in standard BBM, independent Brownian particles split dyadically 
at unit rate. Each particle is of one of two types (or colors), which we 
refer to as {\bf positive} and {\bf negative} (or, at times, red and blue).
The key novelty is that opposing particles annihilate upon contact. 
We call this process {\bf annihilating branching Brownian motion} (ABBM).

\begin{figure}[h]
  \centering
 \includegraphics[width=.5\textwidth]{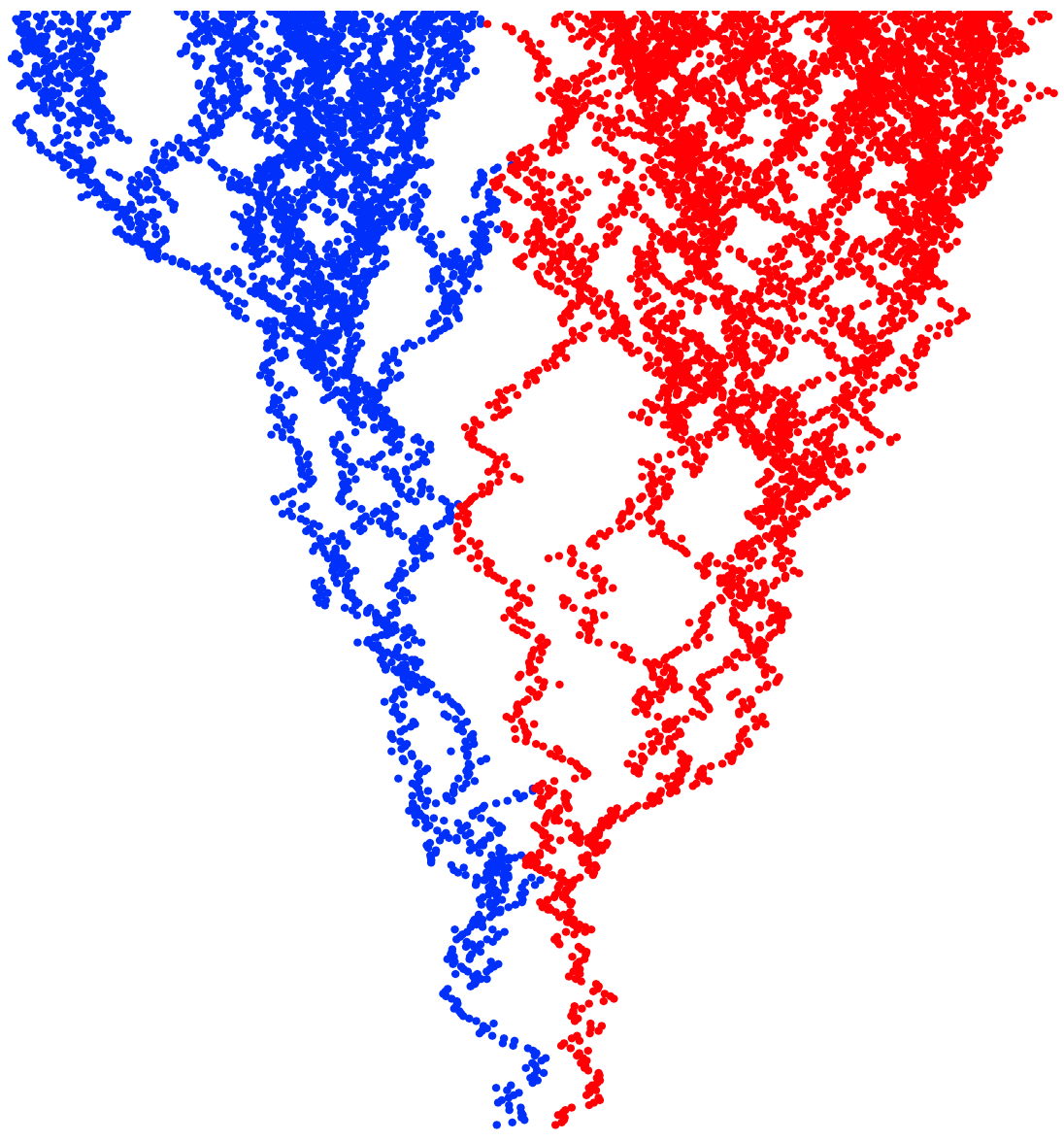}
  \caption{ABBM started with particles at $\pm1$.
    Each horizontal slice is a configuration of particles at a moment in time.}
\end{figure}

With only one type (and hence no annihilation) ABBM reduces to 
classical BBM.
We also note that ABBM naturally generalizes to any number of types,
in which any two particles of different type annihilate upon contact.
If there are initially $k$ types in total, we call it the $k$-type 
ABBM. 

It is possible that one or more of the types 
are annihilated eventually.
It is less obvious whether there is a positive probability 
of \textbf{coexistence}, i.e., the event that all colors initially present 
remain present in the system at all times.
We answer this positively. 

\begin{thm}\label{thm:multi_coex}
Let $k\ge2$. 
For a $k$-type ABBM started from a finite, 
non-trivial initial configuration, 
there is a positive probability of coexistence. 
\end{thm}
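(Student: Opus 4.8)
The plan is to isolate the mechanism by which a colour can vanish --- namely annihilation against other colours --- and to show that, with positive probability, each colour keeps a lineage that is never annihilated. Since in ordinary BBM particles only split and never die, a single colour evolving in isolation survives almost surely; the entire difficulty lies in controlling the annihilations between colours. I would work throughout with the \emph{basic coupling}, in which all colours are driven by the same Brownian motions and branching times and annihilation is superimposed: this realises the $k$ colours as thinnings of $k$ independent BBMs and makes comparisons between different annihilation rules transparent.

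First I would reduce the $k$-colour problem to the two-colour one. Fix a colour $i$ and merge all other colours into a single colour $\ast$. Merging can only suppress annihilations (two formerly distinct opponents of colour $i$ no longer annihilate one another, hence remain present to threaten $i$), so under the basic coupling every annihilation suffered by colour $i$ in the original process also occurs in the merged process. Consequently $\P(i \text{ dies}) \le \P(i \text{ dies in the two-colour process } i \text{ vs } \ast)$, and by a union bound $\P(\text{coexistence}) \ge 1 - \sum_{i=1}^k \P(i \text{ dies in the associated two-colour process})$. This already settles the theorem once the individual two-colour death probabilities are small; for large $k$ the crude union bound must be replaced by the segregated-interface argument below, but in either case the two-colour process is the fundamental object.

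The heart of the proof is therefore the two-colour statement: starting from a finite configuration of reds and blues, with positive probability neither colour dies. On the line the colours read from left to right change only by duplication (branching) and by the removal of an adjacent opposite pair (annihilation), so the spatial order of the colours is preserved and, once the populations segregate, they are separated by a single \emph{interface}. To show that a colour, say blue, survives, I would bound its population from below by a BBM run on a half-line with an absorbing barrier placed at the opposing (red) front: any blue lineage staying on the far side of this barrier is never annihilated. The key input is the classical fact that a BBM with an absorbing barrier moving at speed $c<\sqrt2$, strictly below the linear speed $\sqrt2$ of the BBM front, survives with positive probability. The main obstacle, and the crux of the whole argument, is that the opposing front is not deterministic: the red front is itself eroded by annihilation against blue, so asserting that the effective barrier moves at subcritical speed is a self-referential statement about the interface. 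I would break this circularity by a bootstrap or renewal scheme --- exhibiting a positive-probability ``good'' event on a finite space-time window, after which the interface is guaranteed to advance at strictly subcritical speed on both sides --- together with a monotonicity (FKG-type) comparison that lets one discard unfavourable configurations without destroying the lower bound. Establishing that the interface does not run off to $\pm\infty$, equivalently that the relevant barrier stays subcritical, is where essentially all the work lies.

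Finally, to assemble $k$-colour coexistence from the two-colour estimate I would pass to the segregated picture in which the colours occupy $k$ consecutive regions separated by $k-1$ interfaces $I_1(t) < \cdots < I_{k-1}(t)$; coexistence amounts to all these interfaces persisting with strictly ordered growth, so that each colour retains a region of linearly growing width. Spatially separating the colours at a favourable (positive-probability) moment makes the $k-1$ interfaces approximately independent for a long time, and I would combine this near-independence with the two-colour persistence estimate and a positive-correlation inequality to conclude that all interfaces persist simultaneously with positive probability. The remaining obstacle at this stage is purely one of decoupling neighbouring interfaces and verifying the positive-correlation step; the probabilistic content is already contained in the two-colour analysis.
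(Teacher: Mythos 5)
Your proposal collapses at exactly the point you yourself identify as the crux: the two-colour survival statement. You want to bound the blue population from below by a BBM with absorption at the red front and invoke the classical fact that BBM with an absorbing barrier moving at speed $c<\sqrt{2}$ survives with positive probability. But the only a priori control on the red front is domination by a \emph{free} red BBM, whose front moves at speed exactly $\sqrt{2}$ --- the critical speed, at which BBM with absorption dies out almost surely (Kesten). The assertion that the red front is eroded to a strictly subcritical speed by annihilation against blue is not an input one can assume; it is essentially equivalent to the coexistence statement being proved (indeed, by Theorem \ref{thm:interface} the interface speed on coexistence is a nondegenerate random variable in $(-\sqrt{2},\sqrt{2})$). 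Your proposed way out --- a ``bootstrap or renewal scheme'' exhibiting a good event on a finite space-time window ``after which the interface is guaranteed to advance at strictly subcritical speed on both sides'' --- is a wish, not an argument: no finite-window event can guarantee subcritical advance at all later times without precisely the global control that is in question. Compounding this, the ``FKG-type'' positive-correlation inequalities you invoke (twice) are not available for annihilating systems; annihilation dynamics are not monotone in any useful partial order, and no such inequality is known in this setting.

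By contrast, the paper's proof never touches barriers or absorption. It couples ABBM with a \emph{conservative} process in which opposing particles merge into neutral particles, so that for each type $i$ the measure $\tilde\Xi^i_t = \Xi^i_t + \sum_{j\ne i}\Xi^{\{i,j\}}_t$ is an exact single-type BBM; survival of type $i$ is then certified by the strict additive-martingale inequality $W^i_{\lambda_i} > \sum_{j\ne i} W^j_{\lambda_i}$ via the one-sided bound $\Xi^i_t \ge \tilde\Xi^i_t - \sum_{j\ne i}\tilde\Xi^j_t$, and this inequality is arranged simultaneously for all $i$ with probability at least $1/2$ by a bell-shaped initial configuration (reachable with positive probability from any finite non-trivial one). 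Note also that your opening reduction --- merging all opponents of colour $i$ can only increase the annihilations suffered by colour $i$ --- is itself a nontrivial monotonicity claim about annihilating dynamics that needs a careful induction over collision events under the basic coupling; it is plausibly true, but unproven as stated, and the paper's conservative coupling is designed precisely so that no such monotonicity of the annihilating dynamics is ever needed.
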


The non-triviality assumption mentioned in the above theorem, 
which we will use throughout, is very simple:
A configuration of multi-colored particles in $\R$ 
is called \textbf{non-trivial} if no two particles of distinct colors 
are at the same location.
Some form of non-triviality is obviously needed, 
since otherwise annihilation occurs immediately 
and some (or even all) colors might then
 die out deterministically at time 0.
Also, naturally, a configuration is \textbf{finite} 
if it consists of a finite number of particle.

Next, we consider ABBM with two types, 
and focus on the interface separating the two types of particles 
on the event of coexistence.
We shall refer to a configuration of particles 
as {\bf ordered} if 
no two particles of the same type have a particle of a different type positioned between them. 
(Equivalently, there is a partition of $\R$ into disjoint intervals 
so that each color occupies only one interval, 
and different colors occupy different intervals.)
In the case of two types, a configuration of positive and negative 
particles is ordered if the rightmost negative particle lies 
to the left of the leftmost positive particle
(or vice-versa, 
though by symmetry we may assume the order is as described).
The simplest example of an ordered configuration 
is that of a single negative particle 
and a single positive particle to its right.

Note that, since the diffusion mechanism 
of the Brownian particles is a continuous motion, 
and since the branching mechanism produces particles 
at the position of the parent, 
it follows that ABBM is {\bf order preserving}. 
That is, 
if ABBM is started from an ordered non-trivial initial configuration, 
then the configuration of particles will remain ordered at all future times. 

Consider a two-type ABBM started from a finite, 
non-trivial and ordered configuration.
Suppose, without loss of generality, 
that all negative particles are positioned to the left of all positive particles. 
Then this property will continue to hold at all future times.
Let $I_-(t)$ denote the location of the rightmost negative particle 
and $I_+(t)$ the location of the leftmost positive particle at time $t\ge0$.
(As usual, the supremum and infimum of the empty set 
are $-\infty$ and $+\infty$, respectively.)
Thus the interval $(I_-(t),I_+(t))$ 
is the maximal empty interval separating the 
two types of particles at time $t$.
We define the {\bf interface} between 
the two types as the midpoint 
\[
I(t) := \frac{I_-(t)+I_+(t)}{2}
\]
between the opposing sets of particles. 
Note that we have $I(t)=\pm\infty$ 
for all large $t$ if one of the types dies out, 
and is undefined if both do.
However, our main interest is in the behavior of $I(t)$ 
on the event of coexistence.
Our next result establishes the existence 
of an asymptotic limiting speed for the interface on this event. 

\begin{thm}\label{thm:interface}
  Consider a two-type ABBM started from a finite, non-trivial and ordered initial configuration.
  On the event of coexistence, the limit
  \[
    \lambda_*:= \lim_{t\to\infty} \frac{I(t)}{t}
  \] 
  exists almost surely.
  Moreover, the law of the limiting speed $\lambda_*$, conditioned on coexistence, has support $[-\sqrt{2},\sqrt{2}]$ and no atoms.
\end{thm}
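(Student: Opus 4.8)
My plan is to reduce all three assertions to the large-time growth of the two populations, exploiting that, away from the front, each colour behaves like an ordinary branching Brownian motion whose extremal asymptotics are classical. I would first record the containment $\operatorname{supp}(\lambda_*)\subseteq[-\sqrt{2},\sqrt{2}]$, which needs only a comparison with free (non-annihilating) BBM. Because annihilation only ever deletes particles, the set of surviving negative particles is contained in an ordinary negative BBM, and similarly for the positives. Writing $R_\pm(t)$ and $L_\pm(t)$ for the positions of the rightmost and leftmost particles of the free positive and negative BBMs, the law of large numbers for the BBM extremes gives $R_\pm(t)/t\to\sqrt2$ and $L_\pm(t)/t\to-\sqrt2$ almost surely. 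Since $I_-(t)\le I(t)\le I_+(t)$ by construction, with $I_+(t)$ between $L_+(t)$ and $R_+(t)$ and $I_-(t)$ between $L_-(t)$ and $R_-(t)$, we obtain
\[
-\sqrt2 \;\le\; \liminf_{t\to\infty}\frac{I(t)}{t}\;\le\;\limsup_{t\to\infty}\frac{I(t)}{t}\;\le\;\sqrt2
\]
on coexistence. This simultaneously bounds the support and shows that any almost-sure limit must lie in $[-\sqrt2,\sqrt2]$.

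The heart of the matter is the \emph{existence} of the limit together with an explicit handle on its value, and here the plan is a front-balance analysis. Near a candidate interface position $vt$ with $|v|<\sqrt2$, the negative particles arriving from the left and the positive particles arriving from the right both have local densities growing like $e^{(1-v^2/2)t}$, but with colour-dependent prefactors carrying the (additive/derivative) martingale limits $W_-,W_+$ of the two populations together with a $v$-dependence inherited from the initial spread. The annihilation front stabilises where the two fluxes balance, and solving the resulting balance relation shows that $I(t)/t$ converges to a strictly increasing continuous function $\Phi$ of the ratio $W_-/W_+$, with $\Phi(1)=0$ and $\Phi(r)\to\pm\sqrt2$ as $r\to\infty$ (respectively $r\to0$); the speed saturates at $\pm\sqrt2$ precisely when one colour overwhelms the other, so that the interface is carried by that colour's free front. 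Convergence of the underlying martingales is classical, so this yields the almost-sure limit $\lambda_*=\Phi(W_-/W_+)$, and in particular its existence.

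Granting the representation $\lambda_*=\Phi(W_-/W_+)$, the remaining two claims become soft. The martingale limit of a BBM is known to have a continuous (atomless) law on $(0,\infty)$; the two founding families are independent until their fronts first collide, and one argues that the perpetual deletion at the front is too sparse to create atoms or to destroy the asymptotic independence of $W_-$ and $W_+$. Hence $W_-/W_+$ has a continuous law with full support $(0,\infty)$. Since $\Phi$ is a continuous strictly increasing bijection onto $(-\sqrt2,\sqrt2)$, the law of $\lambda_*$ conditioned on coexistence is atomless with support whose closure is $[-\sqrt2,\sqrt2]$; the endpoints arise in the limit $W_-/W_+\to\infty$ or $0$, which occurs with positive probability—one colour becoming overwhelmingly larger while the other barely survives along a fast spine—yielding coexistence with speed arbitrarily close to $\pm\sqrt2$.

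I expect the front-balance step to be the main obstacle. Unlike a single BBM, the two populations are coupled through a moving absorbing boundary, so the standard tools—Bramson's logarithmic correction, convergence of the derivative martingale, and the fine structure of the extremal process—must be transported to a BBM killed at a random, self-consistently determined front. Making the flux-balance heuristic rigorous (controlling the prefactors uniformly in $v$, showing the gap $I_+(t)-I_-(t)$ is $o(t)$, and proving that annihilation disturbs neither the bulk limits $W_\pm$ nor their asymptotic independence) is where the real work lies; once the representation is in hand, the atomlessness and full-support conclusions are comparatively routine.
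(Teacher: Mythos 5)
Your opening step (the containment of the support in $[-\sqrt{2},\sqrt{2}]$ via the subset coupling with free BBM) is fine and matches the paper's argument with the event $\cG_1$ and the bound \eqref{eq:max_bound}. But the heart of your proposal --- the representation $\lambda_*=\Phi(W_-/W_+)$ for a deterministic, strictly increasing $\Phi$ --- is never proved, and as formulated it cannot be right. The interface speed is not a function of a single scalar ratio of martingale limits: in the paper the speed is identified as the (random) sign-change point of the whole random curve $\lambda\mapsto W^+_\lambda-W^-_\lambda$, where $W^\pm_\lambda$ are the additive martingale limits of the two single-type BBMs produced by the conservative coupling (positive-plus-neutral and negative-plus-neutral particles). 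That crossing point depends on the full $\lambda$-profiles of both populations, not on any one ratio; indeed, the paper's proof of full support (Proposition~\ref{prop:support}) works precisely by tuning \emph{both} the multiplicities and the positions of the founders, which changes the crossing point while a single mass ratio would not capture this. What makes the crossing-point characterization usable is the localization result (Proposition~\ref{prop:mart_con}), showing the martingale mass at parameter $\lambda$ is carried by particles within $t^{1/2+\eps}$ of $\lambda t$, combined with order preservation. Your flux-balance picture is essentially the heuristic the paper itself sketches in the introduction --- and explicitly warns does not by itself rule out slopes $a(t)$ drifting over exponential time scales; you acknowledge this step is the main obstacle, but acknowledging it does not close it.

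The second genuine gap is your treatment of dependence. You assert that annihilation is ``too sparse to create atoms or to destroy the asymptotic independence of $W_-$ and $W_+$.'' This is false as stated: a positive fraction of each population is destroyed at the front, the two limits are strongly dependent at all times, and proving that $W^+_\lambda\neq W^-_\lambda$ almost surely on survival is exactly the technical crux of the paper. It occupies all of Section~\ref{sec:refined}: an enhanced coupling with marked particles, a total-variation bound (at most $e^{-1}$) between the process and the process with one branching event suppressed, and L\'evy's 0--1 law, yielding Proposition~\ref{prop:equality}. Atomlessness in $(-\sqrt{2},\sqrt{2})$ then follows from that non-equality together with continuity (analyticity) of $\lambda\mapsto W^\pm_\lambda$; atomlessness at the endpoints $\pm\sqrt{2}$ cannot be reached this way at all, since $W^\pm_{\pm\sqrt{2}}=0$, and the paper must invoke the derivative martingales, Madaule's theorem (Theorem~\ref{thm:madaule}) and the Arguin--Bovier--Kistler ergodic theorem (Theorem~\ref{T_ABK}) to exclude a deadlock at speed $\pm\sqrt{2}$. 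Your proposal engages with neither of these difficulties, so as it stands it establishes only the easy containment of the support.
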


\begin{figure}
\centering
\includegraphics[width=.6\textwidth]{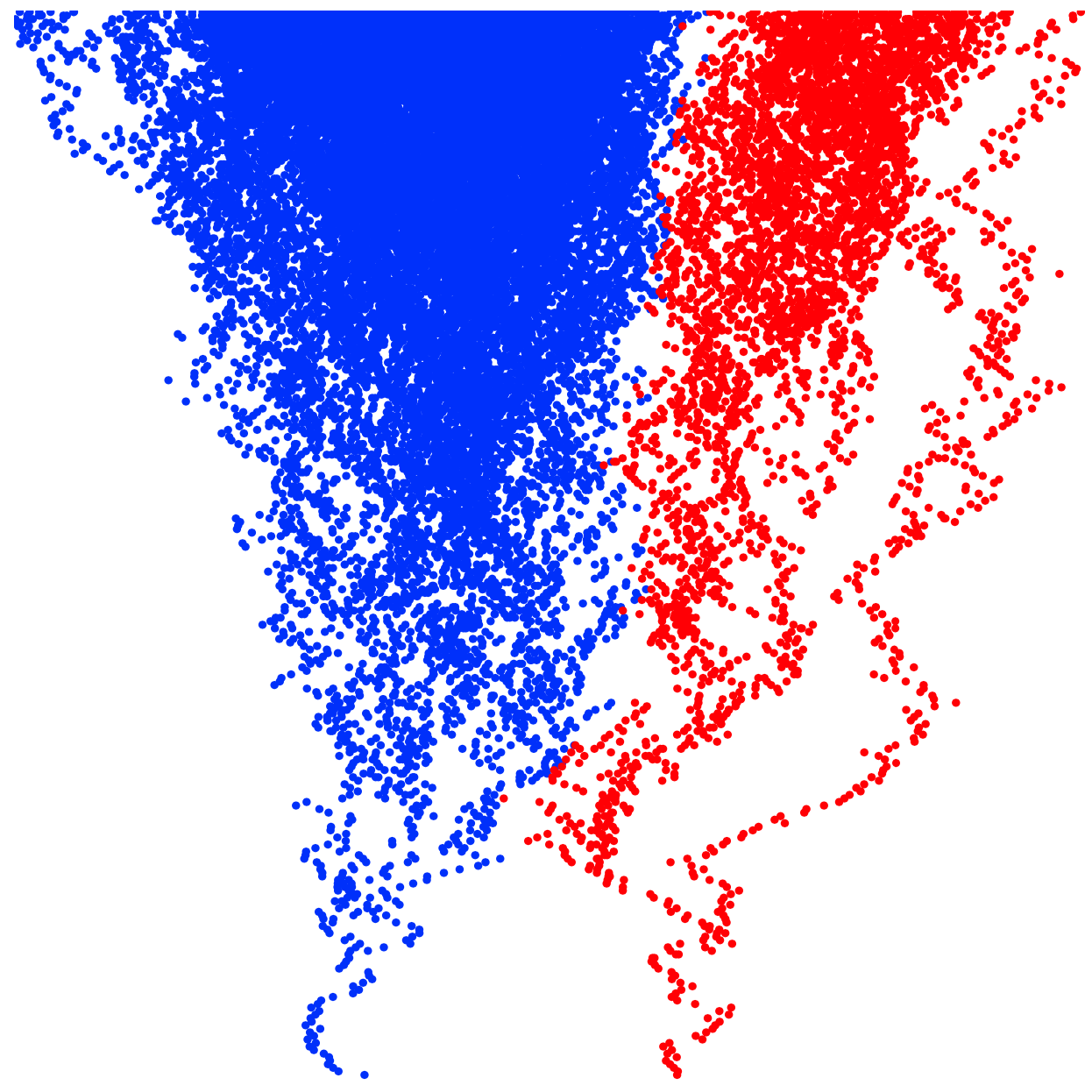}
\caption{An ABBM interface.}
\end{figure}

We remark that although we have defined the interface $I(t)$ 
as the midpoint of the interval $(I_-(t),I_+(t))$, this choice is irrelevant.
It follows from the proof that the two limits
\[
\lim_{t\to\infty}\frac{I_+(t)}{t} 
= \lim_{t\to\infty} \frac{I_-(t)}{t} 
= \lambda_*
\]
are equal almost surely.

Theorem \ref{thm:interface} is perhaps somewhat surprising: 
It seems that if the slope of the interface is far from 0, one type ought to dominate.
One heuristic explanation for why a linear interface with non-zero slope 
is plausible is as follows:
Suppose the interface follows some function $I(t)$.
If we observe only one type of particle, say positive, then we see BBM with absorption 
whenever a particle hits $I(t)$.
The connection between the types comes from the fact that the number of particles hitting $I(t)$ from the two sides up to any $t$ must be equal.
If we take $I(t)=at$ for an arbitrary slope $a$, it is easy to calculate the expected rate for particles hitting the interface from either side: This rate is $e^t$ (from the branching) times is the probability that Brownian motion first hits $I(t)$ at time $t$.
For any linear interface, these probabilities are the same from either side.
If $a>0$, then the total number of particles to the right of $I(t)$ is much smaller, but most of them will hit the interface, whereas the number of particles to the 
left of $I(t)$ grows faster, but most of them are near $0$ and stay quite far from the interface.
If we replace $at$ by a significantly non-linear function, then the probability of first hitting the interface at time $t$ is significantly different from the two sides, making such interfaces highly unlikely.
This heuristic can be turned into a proof of a functional limit 
theorem, 
though it does not seem to rule out that $I(t) = t\cdot a(t)$,  
with $a$ that changes very slowly over exponential time scales.

\subsection{Background}

The study of diffusive annihilating particles, without branching,  
was proposed in the physics literature \cite{OZ,TW} as a model 
for the inert chemical reaction
$A+B\to\emptyset$. 
Such systems were analyzed rigorously 
by Bramson and Lebowitz~\cite{BL1,BL2} 
in the early 1990s (cf.\ 
Cabezas, Rolla and Sidoravicius \cite{CRS18}).
Diffusive (multi-type)
annihilating systems that incorporate branching 
were studied more recently by 
Ahlberg, Griffiths, Janson and Morris~\cite{AGJM,AGJ}.
The current article is in part inspired by these works.

The question of coexistence in models for 
competing growth dates back to
H\"aggstr\"om and Pemantle~\cite{HP98}.
In this context, coexistence is closely 
linked to the existence of geodesics in first-passage percolation, 
as made precise by Hoffman~\cite{H08} and Ahlberg~\cite{A21}, 
and to the growth of arms in diffusion-limited aggregation (DLA), 
as established by Sidoravicius and Stauffer~\cite{SS19}.
Determining more general conditions for coexistence has 
posed a significant challenge, and remains elusive in 
several asymmetric spatial models for competing growth, 
see, e.g., \cite{HP00,B14,DJT20}.

A related direction is BBM with absorption.
If we imagine the interface to be given, 
then on either side of the interface we observe a simple BBM, 
and particles that hit the interface are annihilated.
The study of single-type BBM with absorption 
was initiated by Kesten~\cite{K}, and continued 
more recently by, e.g., 
Berestycki, Berestycki and Schweinsberg~\cite{BBS13,BBS14,BBS15} 
and others.
Our results complement these works.

We note that coexistence in two-type ABBM 
is consistent with \cite{K}, 
which suggests that particles of one type may 
escape towards $+\infty$ while being chased and annihilated 
by particles of the other type.
In contrast, in the 
discrete version of ABBM, 
the {\bf annihilating branching random walk} (ABRW), 
studied on finite connected graphs in \cite{AGJM},  
coexistence of two types is \emph{not} possible. 
This indicates that it is the unbounded nature of the 
real line that makes coexistence possible.
We expect that, as in \cite{AGJM}, 
coexistence is not possible for ABBM on a circle or a bounded interval. 
In contrast, our methods extend straightforwardly to show that 
ABRW on $\Z$ does in fact have positive probability 
of coexistence, but we omit the details. 

For three or more competing types the situation is more delicate. 
As shown in \cite{AGJM}, there are finite connected graphs 
for which coexistence in ABRW is possible.
On the other hand, Ahlberg and Fransson~\cite{AF} 
have shown that coexistence is not possible for any number 
of colors in ABRW on finite paths or cycles.

In this paper, we focus solely on the 1-dimensional setting. 
In a higher-dimensional setting one would need to define 
precisely under what conditions particles annihilate, 
since Brownian motions do not collide.
We believe that several reasonable models of 
annihilating Brownian motions in $\R^d$ 
do have a positive probability of coexistence.

We also believe that the question of coexistence 
does not depend on the precise branching mechanism.
For the sake of conciseness and clarity, 
we focus in this paper on the simple case of dyadic branching, 
where each particle branches into a pair of particles at the same location.
However, see Section \ref{sec:open} for a discussion of a 
more general context in which our methods should apply, 
and for further directions of study and open problems.

\subsection{Outline}

The main techniques used to derive the results of this paper 
are based on martingales and coupling arguments.
More precisely, we use additive martingales  
associated with BBM as a measure of activity 
in a narrow region of the real line.
By comparing the additive martingales associated with particles of different types, 
we show that some regions of $\R$ have more particles of one type 
(and other regions have more of the other types) implying coexistence.

To deduce the existence of a limiting speed of the interface, 
a more refined analysis is required in order to 
determine which regions of $\R$ have particles of each type.
A significant obstacle in this regard is to prove that certain 
martingale limits are almost surely unequal. 
Our argument for this is based on a more elaborate coupling, 
inspired by \cite{AGJM}.

Interestingly, to establish the existence of a limiting speed, and to rule out atoms of its distribution in $(-\sqrt{2},\sqrt{2})$, the additive martingales will suffice. However, at the critical speed $\pm\sqrt{2}$ the additive martingales provide no information, so in order to rule out a deadlock at the endpoints $\pm\sqrt{2}$, we will require the derivative martingales.

In Section \ref{sec:background} we recall the essentials of additive 
and derivative martingales martingales associated with dyadic, single-type BBM, which play a key role in our proofs.
In Section \ref{sec:coexistence} we construct a ``conservative'' coupling and prove Theorem \ref{thm:multi_coex}.
In Section \ref{sec:refined} we introduce  a refined coupling, which we use to obtain a more detailed description of the additive martingales associated with ABBM.
In Section \ref{sec:interface} this is used used to prove Theorem \ref{thm:interface}.
A number of open problems are discussed in Section \ref{sec:open}.

\subsection{Acknowledgments}

We thank Julien Berestycki for many useful discussions, 
and, in particular, for 
directing us to the work of Madaule~\cite{Mad16}, 
which allowed us to complete the proof of Proposition \ref{prop:no_sqrt2}. 

OA was supported in part by NSERC. 
This work was in part supported by the 
Swedish Research Council through grant 2021-03964 (DA).

\section{BBM and associated martingales}\label{sec:background}

\subsection{BBM counting measures}

At any given time, the state of BBM or ABBM 
is a finite configuration of particles on $\R$, 
formally a set $\{x_j\}_{j\in J}$ with some unspecified index set $J$.
More formally, we may represent a configuration of particles 
as a counting measure $\Xi=\sum_{j\in J}\delta_{x_j}$.
A configuration consisting of positive particles at $\{x_j\}_{j\in J}$ 
and negative particles at $\{y_j\}_{j\in J'}$ 
can similarly be represented by a pair of 
counting measures $(\Xi^-,\Xi^+)$, 
where $\Xi^-=\sum_{j\in J'}\delta_{y_j}$ 
and $\Xi^+=\sum_{j\in J}\delta_{x_j}$.
Since positive and negative particles never occupy 
the same location without annihilating, 
we can also consider the signed measure 
$\Xi^+-\Xi^-$ as a complete description of the state of an ABBM.
This notation extends naturally to configurations with 
more than two types, 
with a measure $\Xi^c$ representing particles of color $c$.

The notions of finite and non-trivial configurations 
defined above can more formally be expressed in terms 
of the counting measures. 
A configuration of (one or more types of) particles 
on $\R$ is said to be {\bf finite} if the corresponding 
counting measures are finite.
A configuration of two (or more) types is 
said to be {\bf non-trivial} if particles of different types 
occupy distinct locations, i.e., 
if the counting measures have disjoint support.

\subsection{BBM additive martingales}\label{sec:martingale}

The additive martingales
is one of the most fundamental objects associated with BBM.
Formally, let 
\[
\{X_j(t):j\in J(t),\,t\ge0\}
\] 
denote a BBM, 
which is equivalently represented as a measure-valued process
\begin{equation}\label{eq:counting}
\Xi_t:=\sum_{j\in J(t)}\delta_{X_j(t)}.
\end{equation}
The set $J(t)$ denotes an arbitrary index set 
for particles in the configuration at time $t$.
Usually (unless stated otherwise) we will assume that the process
starts with a single particle at the origin. 
We do not specify a canonical choice for labels.
Since particles are exchangeable, 
the specific choice of index set is irrelevant.

Let 
\begin{equation}\label{E_N}
N(t):=|J(t)|
\end{equation}
denote the number of particles at time $t$. 
Then $(N(t))_{t\ge0}$ evolves as a (single-type) 
continuous-time Markov branching process, 
in which individuals split in two at rate 1.
A straightforward calculation 
shows that $(e^{-t}N(t))_{t\ge0}$ is a non-negative martingale, 
so almost surely converges.

More generally, for $\lambda\in\R$ and $t\ge0$, we define 
\begin{equation}\label{E_Yt}
Y_\lambda(t) 
:= \sum_{j\in J(t)}e^{\lambda X_j(t)}  
= \int e^{\lambda x}\,d\Xi_t(x).
\end{equation}
By conditioning on the branching times, 
and recognizing $\E[e^{\lambda X_j(t)}]$ 
as the moment generating function of a 
centered Gaussian variable with variance $t$, 
we find that
\[
\E[Y_\lambda(t)] 
= \E[N(t)] \E[e^{\lambda X_j(t)}] 
= e^{\hat\lambda t},
\]
where
\begin{equation}\label{E_lam}
\hat\lambda 
:= 1+\frac{\lambda^2}{2}.
\end{equation}
Due to the Markov property of BBM, it follows that
\begin{equation}\label{E_Wlamt}
W_\lambda(t) 
:= e^{-\hat\lambda t}\,Y_\lambda(t)
\end{equation}
is a non-negative martingale.
Therefore, for any given $\lambda\in\R$, the limit 
\begin{equation}\label{E_Wlam}
W_\lambda 
:= \lim_{t\to\infty}W_\lambda(t)
\end{equation}
almost surely exists. 
For $\lambda\in\R$, 
the process $(W_\lambda(t))_{t\ge0}$ 
is referred to as the {\bf additive martingale} 
at $\lambda$ associated with the BBM $\{X_j(t)\}$.
We refer to a BBM, its additive martingales, 
and the limit of its additive martingales 
as \textbf{standard} if it is started with a single particle at the origin.

Certain changes to the configuration of a BBM 
have a simple effect on its long-term behavior 
and the additive martingales.
Specifically, 
translating the initial set results in translation of the entire process.
Taking a union of several configurations 
yields the union of the resulting processes.
For the additive martingales this leads to the following.

\begin{lemma}\label{lem:additivity}
Let $\{X'_j(t)\}$ be a BBM started with particles 
initially at positions $a_1,a_2,\dots,a_m\in\R$, 
and let $W'_\lambda$ denotes the limit of the 
associated additive martingale. 
Then
\begin{equation}\label{eq:mart_multiple}
W'_\lambda 
\stackrel{d}{=} 
\sum_{k=1}^me^{\lambda a_k}\,W_\lambda^{(k)},
\end{equation}
where $W_\lambda^{(1)},\dots,W_\lambda^{(m)}$ 
are IID standard additive martingale limits.
\end{lemma}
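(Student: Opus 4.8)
The plan is to derive \eqref{eq:mart_multiple} from two structural features of BBM: the branching property, which lets one decompose a BBM started from several particles into independent single-particle BBMs, and the covariance of the additive martingale under spatial translation. Neither ingredient is deep, so the argument is essentially bookkeeping, but it is worth laying out the two reductions cleanly.

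First I would invoke the branching property to realize $\{X'_j(t)\}$ as the superposition of $m$ independent BBMs $\{X^{(k)}_j(t)\}$, for $k=1,\dots,m$, where the $k$-th is started from a single particle at $a_k$. Concretely, the descendants of the $m$ initial particles evolve as independent family trees, so the counting measure \eqref{eq:counting} of the full process satisfies $\Xi'_t=\sum_{k=1}^m\Xi^{(k)}_t$, where $\Xi^{(k)}_t$ is the counting measure of the $k$-th sub-BBM. Because $Y_\lambda(t)=\int e^{\lambda x}\,d\Xi_t(x)$ in \eqref{E_Yt} is linear in the measure, this decomposition gives $Y'_\lambda(t)=\sum_{k=1}^m Y^{(k)}_\lambda(t)$. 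The normalizing factor $e^{-\hat\lambda t}$ in \eqref{E_Wlamt} depends only on $\lambda$ through \eqref{E_lam}, and not on the initial positions, so the same deterministic factor applies to every sub-process; hence $W'_\lambda(t)=\sum_{k=1}^m W^{(k)}_\lambda(t)$ for every $t\ge0$. Letting $t\to\infty$ and using that each of the finitely many summands converges almost surely by \eqref{E_Wlam}, I conclude that $W'_\lambda=\sum_{k=1}^m W^{(a_k)}_\lambda$ almost surely, where $W^{(a_k)}_\lambda$ denotes the additive martingale limit associated with the $k$-th BBM started at $a_k$.

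Finally I would apply translation covariance. A BBM started from a single particle at $a_k$ is the image under the shift $x\mapsto x+a_k$ of a standard BBM started at the origin; under this shift each particle position acquires an additive $a_k\in\R$, so that $Y_\lambda$ is multiplied by the constant $e^{\lambda a_k}$ while the normalization is unchanged. Hence $W^{(a_k)}_\lambda=e^{\lambda a_k}W^{(k)}_\lambda$, where $W^{(k)}_\lambda$ is a standard limit; and since the $m$ sub-BBMs are mutually independent, the limits $W^{(1)}_\lambda,\dots,W^{(m)}_\lambda$ are IID standard additive martingale limits. Substituting into the displayed almost-sure identity yields \eqref{eq:mart_multiple}.

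I do not expect a serious obstacle: the statement is a direct consequence of the branching and translation structure, and an equality in distribution is all that is claimed. The only points that require a word of care are the interchange of the limit $t\to\infty$ with the finite sum, which is immediate because a finite sum of almost surely convergent sequences converges to the sum of the limits, and the observation that the deterministic normalization $e^{-\hat\lambda t}$ is common to all sub-processes, so that additivity is inherited by the normalized martingales $W_\lambda(t)$ and survives the passage to their limits.
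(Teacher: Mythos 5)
Your proof is correct and follows exactly the route the paper intends: the paper states Lemma \ref{lem:additivity} without a formal proof, justifying it only by the two observations that translating the initial configuration translates the whole process and that a union of initial configurations yields a union of independent processes, which are precisely the branching-property decomposition and translation covariance you formalize. Your argument in fact yields an almost-sure identity in the coupled construction, which is stronger than the claimed equality in distribution, so there is no gap.
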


By \eqref{E_N} and \eqref{E_Yt}, 
we have that $W_\lambda(t)=N(t)$ when $\lambda=0$.
It follows from some versions of the Kesten--Stigum theorem 
that $W_0>0$.
For other $\lambda$, positivity of the limit $W_\lambda$ 
has been studied extensively.
This originates with the work of Biggins~\cite{biggins77} (cf.\  
\cite{neveu88,Big92,lyons97}).

\begin{prop}
  \label{prop:mart_lim}
  For $|\lambda|<\sqrt{2}$, the standard additive martingales converge in $L^1$, and $\P(W_\lambda>0)=1$.
  For $|\lambda|\ge\sqrt{2}$, we have $\P(W_\lambda=0)=1$.
\end{prop}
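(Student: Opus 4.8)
The plan is to treat all three regimes through the classical spine change of measure (in the spirit of Lyons and Kesten--Stigum), which handles the non-degenerate range, the degenerate range and --- with extra work --- the critical endpoints in a single framework. Throughout I normalise to a single initial particle at the origin, so $W_\lambda(0)=1$, and write $\hat\lambda=1+\lambda^2/2$.

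First I would set up the size-biased measure. Since $(W_\lambda(t))_{t\ge0}$ is a non-negative mean-one martingale, it defines a measure $\hat\P$ on an enriched space carrying a distinguished line of descent (a spine) $\xi$, through $\mathrm d\hat\P/\mathrm d\P|_{\mathcal F_t}=W_\lambda(t)$. Under $\hat\P$ the spine is a Brownian motion with drift $\lambda$, so $\xi_t=\lambda t+B_t$; it branches at the accelerated rate $2$; and at each spine branch time the off-spine child starts an independent copy of the BBM. Using translation invariance and \cref{lem:additivity}, this gives the spine decomposition
\[
W_\lambda(t)=e^{\lambda\xi_t-\hat\lambda t}+\sum_{u}e^{\lambda\xi_{s_u}-\hat\lambda s_u}\,\widetilde W^{(u)}_\lambda(t-s_u),
\]
where the sum runs over spine branch points $u$ with branch times $s_u\le t$ and the $\widetilde W^{(u)}_\lambda$ are independent standard additive martingales. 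I would then invoke the standard martingale dichotomy: $W_\lambda$ is uniformly integrable under $\P$ (so $\E[W_\lambda]=1$) if $\liminf_t W_\lambda(t)<\infty$ holds $\hat\P$-a.s., whereas $W_\lambda=0$ $\P$-a.s. if $W_\lambda(t)\to\infty$ holds $\hat\P$-a.s.

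Everything then turns on the spine weight $e^{\lambda\xi_t-\hat\lambda t}$, whose exponent equals $(\tfrac{\lambda^2}{2}-1)t+\lambda B_t$. For $|\lambda|>\sqrt2$ this tends to $+\infty$ $\hat\P$-a.s. (as $B_t/t\to0$), so $W_\lambda(t)\ge e^{\lambda\xi_t-\hat\lambda t}\to\infty$ and the dichotomy yields $W_\lambda=0$ a.s. For $|\lambda|<\sqrt2$ the exponent tends to $-\infty$; here I would bound the subtree sum by its $\hat\P$-conditional mean given the spine, namely $\sum_u e^{\lambda\xi_{s_u}-\hat\lambda s_u}$, whose conditional expectation is $2\int_0^\infty e^{(\lambda^2/2-1)s+\lambda B_s}\,\mathrm ds$; as the integrand decays exponentially this is finite $\hat\P$-a.s., so the subtree sum has a finite $\hat\P$-limit and thus $\liminf_t W_\lambda(t)<\infty$ $\hat\P$-a.s. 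The dichotomy then gives uniform integrability, $\E[W_\lambda]=1$, and in particular $\P(W_\lambda>0)>0$. To upgrade to $\P(W_\lambda>0)=1$ I would use the branching property: conditioning on the first split shows $q:=\P(W_\lambda=0)$ satisfies $q=q^2$, so $q\in\{0,1\}$, and uniform integrability rules out $q=1$. As a shortcut on part of this range, for $|\lambda|<1$ a direct second-moment computation gives $\sup_t\E[W_\lambda(t)^2]<\infty$ and hence $L^2$-convergence immediately; but since this fails once $|\lambda|\ge1$, the spine argument is needed to reach the full range up to $\sqrt2$.

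The hard part will be the critical endpoints $|\lambda|=\sqrt2$, where the spine exponent reduces to $\pm\sqrt2\,B_t$, which neither diverges to $+\infty$ nor stays bounded, so the elementary dichotomy is inconclusive. Here the conditional mean of the subtree sum is $2\int_0^\infty e^{\sqrt2 B_s}\,\mathrm ds$, which diverges $\hat\P$-a.s. (the integrand is at least $1$ on the recurrent set $\{B_s\ge0\}$, of infinite Lebesgue measure), strongly suggesting degeneracy; turning this into $W_{\sqrt2}(t)\to\infty$ $\hat\P$-a.s. would require a more careful second-moment control of the subtree contributions along the spine. Alternatively, and perhaps more transparently, I would invoke the known front asymptotics of BBM (Bramson), $\max_j X_j(t)=\sqrt2\,t-\tfrac{3}{2\sqrt2}\log t+O(1)$, together with the fact that $\sum_j e^{\sqrt2 X_j(t)}$ is governed by the particles near the front, to conclude that $W_{\sqrt2}(t)$ decays like $t^{-1/2}$ and hence vanishes --- this is precisely the regime controlled by the derivative martingale introduced in the next subsection. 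Either route shows that, unlike the interior of the interval, the endpoints genuinely require input beyond the elementary drift computation.
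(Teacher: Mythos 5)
Your framework is sound and is, in fact, exactly the route the paper implicitly endorses: the paper offers no proof of \cref{prop:mart_lim}, deferring to the classical literature \cite{biggins77,neveu88,Big92,lyons97}, and your spine/size-biased argument is precisely the method of \cite{lyons97}. For $|\lambda|<\sqrt2$ and $|\lambda|>\sqrt2$ your proof is complete and correct: the spine decomposition, the conditional Fatou bound via $2\int_0^\infty e^{(\lambda^2/2-1)s+\lambda B_s}\,ds$, the measure-theoretic dichotomy, and the $q=q^2$ zero--one argument are all correctly deployed, as is the remark that the $L^2$ shortcut only covers $|\lambda|<1$.

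The genuine gap is at the critical endpoints $|\lambda|=\sqrt2$, which you flag but do not close: the ``more careful second-moment control'' is left unspecified, and the alternative route through Bramson's front asymptotics rests on the unproved claim that $\sum_j e^{\sqrt2 X_j(t)-2t}$ is ``governed by particles near the front'' and decays like $t^{-1/2}$ --- that is essentially the statement that $\sqrt t\, W_{\sqrt2}(t)$ converges to a multiple of $D_\infty$, a theorem substantially deeper than the degeneracy you are trying to prove. The gap is easily repaired inside your own framework, however; the difficulty is an artifact of stating the dichotomy with ``$W_\lambda(t)\to\infty$ $\hat\P$-a.s.'' rather than with a limsup. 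Under $\hat\P$ the process $1/W_\lambda(t)$ is a non-negative supermartingale, so $W_\lambda(t)$ converges $\hat\P$-a.s.\ to a limit in $(0,\infty]$; combined with the Lebesgue decomposition identity $\E[W_\lambda]=\hat\P\left(\limsup_{t\to\infty} W_\lambda(t)<\infty\right)$, this shows that $\limsup_t W_\lambda(t)=\infty$ $\hat\P$-a.s.\ already forces $W_\lambda=0$ $\P$-a.s. At $\lambda=\sqrt2$ the spine term is $e^{\sqrt2 B_t}$, and since $\limsup_{t\to\infty}B_t=+\infty$ almost surely, you get $\limsup_t W_{\sqrt2}(t)\ge \limsup_t e^{\sqrt2 B_t}=\infty$ $\hat\P$-a.s., and the critical case follows with no second moments and no front asymptotics. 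With that one-line correction to the dichotomy, your proof covers the entire statement.
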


In addition to the above, let us also mention that $W_\lambda$ 
is known \cite{Big92} to be analytic in $\lambda\in[\sqrt{2},\sqrt{2}]$.
Proposition \ref{prop:mart_lim} can be used to establish a bound 
on the maximal displacement of particles
(which is usually proved by more elementary methods).
Let 
\[
M(t)
:=\max\{X_j(t):j\in J(t)\}
\] 
denote the location of the rightmost particle at time $t$.
It is well known that $M(t)\sim\sqrt{2} t$.
The upper bound for this follows 
using \eqref{E_Yt}--\eqref{E_Wlam} and Proposition \ref{prop:mart_lim} 
applied to $\lambda=\sqrt{2}$,  
which implies 
that, almost surely, 
\[
e^{\sqrt{2}M(t)-2t}\le W_{\sqrt{2}}(t)
\to 0,\quad\text{as }t\to\infty.
\]
Consequently, almost surely, we have $\sqrt{2}M(t)-2t\to-\infty$, 
and so
\begin{equation}\label{eq:max_bound}
\P(M(t) \le \sqrt{2}t \text{, for all large }t) = 1.
\end{equation}

\subsection{The BBM derivative martingale}
\label{S_DMG}

For $\lambda\in(-\sqrt{2},\sqrt{2})$ the additive martingale is non-zero, 
and will provide us with information regarding the evolution of particles. 
However, for $\lambda=\pm\sqrt{2}$ it is zero. 
For these values, finer information can be obtained from 
the so-called derivative martingale.

Since for every $\lambda$ we have a martingale 
$W_\lambda(t)$, it follows that the derivative 
$\frac{d}{d\lambda} W_\lambda(t)$ is also a martingale, 
known as the {\bf derivative martingale} at $\lambda$.
This has been used mainly with $\lambda=\pm\sqrt{2}$ 
to study the extremal values of BBM.
More specifically, define
\[D(t) := -\frac{d}{d\lambda} W_{\sqrt{2}}(t)
  = e^{-2t} \sum_{j\in J(t)} (\sqrt{2}t - X_j(t))e^{\sqrt{2} X_j(t)}. \]

Lalley and Sellke \cite{LS87} proved that 
$D_\infty := \lim_{t\to\infty} D(t)$ exists almost surely and in $L^1$, 
and that $D_\infty>0$ almost surely.
While derivatives and limits cannot in general be exchanged, 
the limit of the derivative martingale is related to the derivative of the limit $W_\lambda$.
For $|\lambda|<\sqrt{2}$, the limit and derivative can be exchanged since $W_\lambda(t)$ converges as an analytic function \cite{Big92}.
At $\lambda=\sqrt2$ the limit is not analytic.
Indeed the right derivative is $0$ since $W_\lambda=0$ for $\lambda>\sqrt{2}$.
However, the left derivative is related to the limit $D_\infty$, 
as the following theorem of Madaule~\cite{Mad16} shows.

\begin{thm}[{Madaule \cite[Theorem 1.1]{Mad16}}] \label{thm:madaule}
  Almost surely we have
  \[ D_\infty = \frac12 \lim_{\lambda\uparrow\sqrt{2}} \frac{W_\lambda}{\sqrt2-\sqrt{\lambda}}. \]
\end{thm}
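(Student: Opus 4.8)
The plan is to reduce the identity to the near-critical asymptotics of the additive martingale limit $W_\lambda$ and to extract $D_\infty$ from it through a Seneta--Heyde type coupling of the parameter $\lambda$ with the time horizon. Writing $\delta:=\sqrt2-\lambda$ and recording each particle through its distance $u_j:=\sqrt2\,t-X_j(t)$ from the critical front, a direct computation using $\hat\lambda=1+\lambda^2/2$ rearranges the additive martingale into the clean form
\[
W_\lambda(t)=e^{-\frac12\delta^2 t}\sum_{j\in J(t)}e^{-\lambda u_j},
\]
so that at $\lambda=\sqrt2$ it reduces to $W_{\sqrt2}(t)=\sum_j e^{-\sqrt2 u_j}$ and $D(t)=-\partial_\lambda W_{\sqrt2}(t)=\sum_j u_j e^{-\sqrt2 u_j}$. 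The difficulty, already visible here, is that for fixed $\delta>0$ the limit $W_\lambda=\lim_t W_\lambda(t)$ is governed by particles at $u_j\sim\delta t$ (speed $\lambda$), whereas $D_\infty$ lives on the derivative-martingale scale $u_j\sim\sqrt t$; the two regimes meet only when $\delta\sqrt t=O(1)$, i.e.\ $t\asymp\delta^{-2}$.

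First I would make this coupling precise. Rescaling space by $\sqrt t$, the Seneta--Heyde theory for BBM (the continuous analogue of the branching random walk result of Aïdékon and Shi) gives $\tilde\Theta_t:=\sqrt t\sum_j e^{-\sqrt2 u_j}\,\delta_{u_j/\sqrt t}\to D_\infty\,\kappa$, where $\kappa$ is the Rayleigh measure $\kappa(dw)=\sqrt{2/\pi}\,w\,e^{-w^2/2}\,dw$. This profile is pinned down by the two normalisations it must satisfy: the first moment $\int_0^\infty w\,\kappa(dw)=1$, matching $D(t)\to D_\infty$, together with the Gaussian scale $1$, which then forces the total mass $\kappa([0,\infty))=\sqrt{2/\pi}$, matching $\sqrt t\,W_{\sqrt2}(t)\to\sqrt{2/\pi}\,D_\infty$. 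Substituting $t=a\delta^{-2}$ into the identity above turns $W_\lambda(t)$ into $e^{-a/2}(\delta/\sqrt a)\int e^{\sqrt a\,w}\,\tilde\Theta_t(dw)$; since $\int_0^\infty e^{\sqrt a\,w}\,\kappa(dw)=\sqrt{2/\pi}\int_0^\infty e^{\sqrt a\,w}w\,e^{-w^2/2}\,dw\sim 2\sqrt a\,e^{a/2}$, the $a$- and $e^{\pm a/2}$-factors cancel and one is left with $W_\lambda\approx 2\,\delta\,D_\infty$, independently of the coupling constant $a$. This is exactly the asserted linear rate $W_\lambda\sim 2(\sqrt2-\lambda)D_\infty$, whose reciprocal slope is the factor $\tfrac12$.

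The hard part is that the previous display is only a statement about the pre-limit martingale at the coupling time, and two genuine gaps must be closed uniformly as $\delta\to0$. The first is that the functional $\int e^{\sqrt a\,w}\,\tilde\Theta_t(dw)$ probes the \emph{tail} of $\tilde\Theta_t$ near $w\sim\sqrt a$ (i.e.\ particles with $u_j\sim a/\delta$), which weak convergence alone does not control; this I would handle by a spine (many-to-one) decomposition together with a truncation discarding particles atypically far ahead of $\sqrt2\,t-\tfrac{3}{2\sqrt2}\log t$, upgraded to the required exponential-moment functional by a second-moment estimate on the truncated sum. The second, and more delicate, gap is to bound the difference between $W_\lambda(a\delta^{-2})$ and the true limit $W_\lambda$, namely the martingale mass accumulated after the coupling time; here I would use the martingale property together with the same truncation to show this excess is negligible once $a\to\infty$ is taken after $\delta\to0$. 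It is precisely this uniform control that is the technical heart of Madaule's argument, and it also accounts for the factor $2$: the naive interchange $\lim_{\lambda\uparrow\sqrt2}\partial_\lambda W_\lambda=\partial_{\sqrt2}\lim_t W_{\sqrt2}(t)=-D_\infty$ fails at criticality, and the correct left-hand derivative of $\lambda\mapsto W_\lambda$ at $\sqrt2$ is $-2D_\infty$.
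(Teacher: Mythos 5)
This statement is not proved in the paper at all: it is Madaule's theorem, quoted verbatim from \cite{Mad16}, and the paper's only contribution is the remark that the branching-random-walk result transfers to BBM by restricting to integer times and rescaling so that the critical parameter becomes $\sqrt2$. So your attempt to reprove the near-critical asymptotics from scratch is necessarily a different route, and as a heuristic it is the correct one: the rewriting $W_\lambda(t)=e^{-\delta^2 t/2}\sum_j e^{-\lambda u_j}$ with $\delta=\sqrt2-\lambda$, the identification of the matching scale $t\asymp\delta^{-2}$ where the speed-$\lambda$ window meets the $\sqrt t$ window of the derivative martingale, the Rayleigh profile with unit first moment and total mass $\sqrt{2/\pi}$, and the Gaussian tilting that yields $W_\lambda\approx 2\delta D_\infty$ are all right, as is your closing observation that the left derivative of $\lambda\mapsto W_\lambda$ at $\sqrt2$ is $-2D_\infty$ rather than $-D_\infty$. (You also silently prove the statement with $\sqrt2-\lambda$ in the denominator; as printed, with $\sqrt2-\sqrt{\lambda}$, the limit would be $0$, since $W_\lambda\to W_{\sqrt2}=0$ while the denominator tends to $\sqrt2-2^{1/4}>0$, so the displayed formula contains a typo that your computation implicitly corrects.)

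As a proof, however, there is a genuine gap --- indeed you name it and then defer it: the two items you call ``gaps to be closed uniformly as $\delta\to0$'' are not side technicalities but the entire content of Madaule's argument, and listing the toolbox (spine decomposition, truncation at $\sqrt2\,t-\tfrac{3}{2\sqrt2}\log t$, second moments) does not close them. There is also a structural obstacle your scheme does not address. The Seneta--Heyde-type input $\tilde\Theta_t\to D_\infty\kappa$ (A\"{\i}d\'ekon--Shi for branching random walk, with BBM analogues) holds only \emph{in probability}; almost sure convergence of $\sqrt t\,W_{\sqrt2}(t)$ is known to fail, as the critical martingale has nondegenerate pathwise fluctuations. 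Consequently, the pathwise substitution $t=a\delta^{-2}$ followed by $\delta\to0$ cannot by itself deliver the almost sure statement of the theorem: along the (random) times at which $\tilde\Theta_t$ deviates from $D_\infty\kappa$, your approximation of $W_\lambda$ by $W_\lambda(a\delta^{-2})$ breaks down, and this happens for arbitrarily small $\delta$ on a set of full measure. Obtaining the a.s.\ limit in $\lambda$ requires quantitative estimates, uniform in $\lambda$, strong enough for Borel--Cantelli along a sequence $\lambda_n\uparrow\sqrt2$, combined with almost sure regularity of $\lambda\mapsto W_\lambda$ to interpolate --- an argument structured on the limits $W_\lambda$ themselves rather than on a pathwise time coupling. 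This is precisely the delicacy of the result: the limit in $\lambda$ exists almost surely even though the corresponding normalized limit in $t$ at criticality does not. For the purposes of this paper, what actually needs checking is only that BBM, sampled at integer times and rescaled, satisfies the hypotheses of \cite[Theorem 1.1]{Mad16} --- which is what the authors assert --- and your proposal, while a sound explanation of why the theorem is true and where the factor $2$ comes from, does not constitute an independent proof of it.
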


We remark that this result is stated
in \cite{Mad16}
 for more general branching random walks, and not specifically BBM.
However, it can be applied to BBM by considering BBM at integer times. 
Indeed, it is easy to verify that the conditions in \cite{Mad16} hold.
(Madaule also uses a normalization for which the critical $\lambda$ is $1$, 
so one would need to rescale space or time for BBM by $\sqrt{2}$.)

We will also require a second fact, relating 
the derivative martingale to the displacement of the maximal particle.
There are several results along these lines.
The one which we will use is due to Arguin, Bovier and Kistler \cite{ABK13}.
Let 
\[
Q(t) := M(t) - \sqrt{2}t + \frac{3}{2\sqrt2} \log t,
\] 
and for $x\in\R$ let
\[
  F_T(x) := \frac{1}{T}\int_0^T {\bf 1}_{\{Q(t)\leq x\}} dt.
\]
be the empirical distribution function of $Q(t)$ up to time $T$. 
The limit of $F_T$ is the distribution function of a Gumbel 
random variable, shifted by $\log D_\infty$. 

\begin{thm}[{Arguin et al.\ \cite[Theorem 1.1]{ABK13}}] \label{T_ABK}
  Let $D_\infty$ be the limit of the derivative martingale and $F_T(\cdot)$ be as above.
  Then almost surely
  \[
    \lim_{T\to\infty} F_T(x) = \exp\left(-C D_\infty e^{-\sqrt{2} x} \right),
  \]
  where $C$ is some constant.
\end{thm}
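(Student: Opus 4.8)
The statement is an ergodic, time-averaged refinement of the classical distributional limit for the recentered frontier of BBM: by the work of Bramson and of Lalley and Sellke, $Q(t)$ converges in law to a randomly shifted Gumbel whose distribution, conditionally on $D_\infty$, is precisely $x\mapsto\exp(-CD_\infty e^{-\sqrt2 x})$. The new content is that this \emph{quenched} law is also the almost-sure limit of the empirical time average $F_T$. The plan is to (i) identify the conditional law of the late frontier via the branching decomposition, (ii) prove that the frontier decorrelates at well-separated times, and (iii) convert this into an $L^2$ ergodic estimate that upgrades to almost-sure convergence.

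For (i), I would fix $s\ge0$, condition on $\mathcal F_s$, and use the branching property to write, for $t>s$,
\[
\P\bigl(M(t)\le m(t)+x \mid \mathcal F_s\bigr) = \prod_{j\in J(s)} \P\bigl(M^{(j)}(t-s)\le m(t)+x-X_j(s)\bigr),
\]
where $m(t)=\sqrt2\, t-\tfrac{3}{2\sqrt2}\log t$ and the $M^{(j)}$ are independent copies of $M$. The delicate point is that the naive first-order expansion of each factor produces the additive martingale $W_{\sqrt2}(s)$, which vanishes by \cref{prop:mart_lim}; the surviving contribution comes from the polynomial correction in Bramson's tail asymptotics for $\P(M(\tau)\le m(\tau)+z)$, and is governed by the \emph{derivative}-martingale functional. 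This is exactly the Lalley--Sellke mechanism, and it yields
\[
\P\bigl(Q(t)\le x \mid \mathcal F_s\bigr)\longrightarrow L_s:=\exp\bigl(-C D(s) e^{-\sqrt2 x}\bigr) \quad\text{as } t\to\infty,
\]
with $L_s\to L:=\exp(-CD_\infty e^{-\sqrt2 x})$ since $D(s)\to D_\infty$ almost surely.

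The crux, and the step I expect to be hardest, is the decorrelation estimate (ii). Writing $g_x(t):=\mathbf 1_{\{Q(t)\le x\}}$, I would show that for $s\ll t$ with $t-s$ large the events $\{Q(s)\le x\}$ and $\{Q(t)\le x\}$ are asymptotically independent, conditionally on an early $\sigma$-field $\mathcal F_r$ with $r\ll s$. Heuristically, the particle realizing $M(t)$ descends, with high probability, from an ancestor that split off the leading family only after time $s$, so that the frontiers at times $s$ and $t$ are driven by essentially disjoint genealogical families. Making this quantitative requires the fine genealogy of near-extremal BBM particles together with Bramson-type barrier and ballot estimates, and the goal is a bound on the covariance $\mathrm{Cov}(g_x(s),g_x(t))$ that is integrable in the time gap $t-s$.

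Finally, for (iii), set $A_T:=F_T(x)=\tfrac1T\int_0^T g_x(t)\,dt$. Since the target $L$ is measurable with respect to the whole process (through $D_\infty$), I would approximate it by the $\mathcal F_s$-measurable $L_s$, control $\E[g_x(t)-L_s\mid\mathcal F_s]$ for $t\gg s$ using (i), and then bound $\E[(A_T-L)^2]$ by a double time-integral of the conditional covariances from (ii). The decorrelation bound makes this $o(1)$ with a quantitative rate, so Borel--Cantelli along a geometric subsequence $T_k=\rho^k$ gives almost-sure convergence along $T_k$; because $T\mapsto A_T$ is Lipschitz (the integrand is bounded by $1$), this upgrades to almost-sure convergence for all $T$. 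Step (i) then identifies the limit as $\exp(-CD_\infty e^{-\sqrt2 x})$, with $C$ the Lalley--Sellke constant.
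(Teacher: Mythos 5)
You should first note what the paper actually does with this statement: nothing is proved here. Theorem \ref{T_ABK} is imported verbatim from Arguin, Bovier and Kistler \cite{ABK13} (their Theorem 1.1) and is used as a black box in the proof of Proposition \ref{prop:no_sqrt2}, so there is no internal proof to compare yours against. What your sketch amounts to is a reconstruction of the original ABK argument, and as such it is structurally faithful: the three ingredients you isolate --- (i) the Lalley--Sellke identification of the conditional limit law via Bramson's tail asymptotics, with the additive martingale $W_{\sqrt2}(s)$ dropping out and the derivative-martingale functional surviving; (ii) conditional decorrelation of the frontier at well-separated times, resting on the genealogy of extremal particles; (iii) a covariance/second-moment ergodic estimate upgraded to almost-sure convergence by Borel--Cantelli along a geometric subsequence plus boundedness of the integrand --- are exactly the skeleton of the proof in \cite{ABK13}.

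Two caveats. First, in step (i) your claimed limit $L_s=\exp\bigl(-CD(s)e^{-\sqrt2 x}\bigr)$ is not quite the $t\to\infty$ limit at fixed $s$: expanding Bramson's asymptotics over the particles at time $s$ gives an exponent of the form $-Ce^{-\sqrt2 x}\bigl(D(s)+xW_{\sqrt2}(s)\bigr)$, and the additive-martingale term vanishes only in the subsequent limit $s\to\infty$, so one genuinely needs the double limit (harmless for your plan, but it should be stated correctly). Second, and more importantly, step (ii) is not a lemma one can wave at: the quantitative decorrelation of $\{Q(s)\le x\}$ and $\{Q(t)\le x\}$, conditionally on an early $\sigma$-field, is the actual mathematical content of \cite{ABK13}, and it rests on their separate and substantial work localizing the paths and branching times of near-extremal particles. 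Your proposal correctly identifies where the difficulty sits but does not supply it, so as written it is an accurate roadmap to the literature rather than a self-contained proof --- which, to be fair, is also how the paper itself treats the statement.
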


In particular, the fraction of time that 
\[
M(t) \leq \sqrt{2}t - \frac{3}{2\sqrt2}\log t
\] 
converges to $\exp(-C D_\infty)$.

\subsection{A witness of activity}

A key observation that will be central in our arguments 
is that the main contribution to $W_\lambda(t)$ 
comes from particles located in a narrow interval around $\lambda t$.
Although this is quite natural, it (to the best of our knowledge) 
does not appear explicitly in the literature.
There are analogues showing that the dominant contribution 
to the derivative martingale $D(t)$ comes from particles 
near the maximum $\sqrt{2}$, and Theorem \ref{T_ABK} can be viewed as one such result.

In order to make this precise, let $\eps>0$ be arbitrary but fixed,
and consider particles within distance 
$\alpha_t=t^{1/2+\eps}$ of $\lambda t$.
Let 
\begin{equation}\label{eq:S_set}
S_\lambda(t):=\{x\in\R:|x-\lambda t|<\alpha_t\},
\end{equation}
and define (cf.\ \eqref{E_Wlamt})
\begin{equation}\label{E_barWlamt}
\overline W_\lambda(t)
:= e^{-\hat\lambda t}
\sum_{X_j(t)\in S_\lambda(t)}e^{\lambda X_j(t)}.
\end{equation}
In other words, we obtain $\overline W_\lambda(t)$ 
by including only particles within distance 
$\alpha_t$ of $\lambda t$ in 
the definition \eqref{E_Yt} of $Y_\lambda(t)$.
We note that $W_\lambda(t)-\overline W_\lambda(t) \ge 0$, 
and that the process 
$(\overline W_\lambda(t))_{t\ge0}$ is {\it not} a martingale, 
since changes from particles entering 
and exiting $S_\lambda(t)$ have non-zero mean.

The remainder of this section is 
devoted to showing that the difference between 
$W_\lambda(t)$ and $\overline W_\lambda(t)$ 
vanishes as $t\to\infty$, so that their limits coincide.

\begin{prop}\label{prop:mart_con}
  For every $\lambda\in\R$, almost surely, we have 
  $W_\lambda(t)-\overline W_\lambda(t) \to 0$ as $t\to\infty$.
  In particular, almost surely, the limits are equal (cf.\ \eqref{E_Wlam}), i.e., 
  \begin{equation}\label{E_barWlam}
    \overline W_\lambda := \lim_{t\to\infty}\overline W_\lambda(t)
  \end{equation}
  exists, and $W_\lambda=\overline W_\lambda$. 
\end{prop}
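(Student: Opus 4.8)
The plan is to show that the contribution to $W_\lambda(t)$ from particles \emph{outside} the window $S_\lambda(t)$ vanishes almost surely. Write $R_\lambda(t) := W_\lambda(t) - \overline W_\lambda(t) = e^{-\hat\lambda t}\sum_{X_j(t)\notin S_\lambda(t)} e^{\lambda X_j(t)} \ge 0$, and split $R_\lambda(t)$ into the contribution from particles with $X_j(t) > \lambda t + \alpha_t$ (the ``far right'' part) and those with $X_j(t) < \lambda t - \alpha_t$ (the ``far left'' part). The first step is to control each piece in expectation. Using the many-to-one lemma (or equivalently conditioning on branching times as in the derivation of $\E[Y_\lambda(t)] = e^{\hat\lambda t}$), the expected far-right contribution is
\[
\E\!\left[e^{-\hat\lambda t}\sum_{X_j(t) > \lambda t + \alpha_t} e^{\lambda X_j(t)}\right]
= e^{-\lambda^2 t/2}\,\E\!\left[e^{\lambda B_t}\mathbf 1_{\{B_t > \lambda t + \alpha_t\}}\right],
\]
where $B_t$ is a centered Gaussian of variance $t$. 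After completing the square, this equals $\P(B_t > \alpha_t) = \P(Z > t^{\eps})$ with $Z$ standard normal, by the choice $\alpha_t = t^{1/2+\eps}$. This is super-polynomially small, and summable along integer times. The far-left piece is handled symmetrically, giving the same Gaussian tail bound.

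The second step is to upgrade the expectation bound to an almost-sure statement along integer times via Borel--Cantelli: since $\sum_n \P(Z > n^\eps) < \infty$, Markov's inequality gives $R_\lambda(n) \to 0$ almost surely along $n\in\N$. The third step is to fill the gaps between integer times and rule out oscillations. For this I would bound $\sup_{t\in[n,n+1]} R_\lambda(t)$ in terms of the configuration near time $n$, using that over a unit time interval the total number of particles and their total displacement are controlled: the branching produces at most a bounded factor more particles in expectation over unit time, and the Gaussian increments over $[n,n+1]$ move any given particle by an $O(1)$ (in probability, with Gaussian tails) amount, so a particle outside $S_\lambda(t)$ for some $t\in[n,n+1]$ was either outside a slightly enlarged window $S_\lambda^{\,\prime}(n)$ at time $n$, or made an anomalously large excursion. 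I would absorb the enlargement of the window into the $\eps$ margin (replacing $\alpha_t$ by $\tfrac12\alpha_t$ throughout the expectation computation costs only a larger constant in the Gaussian exponent) and control the excursion term by a union bound over the particles present at time $n$ together with the reflection principle for Brownian motion.

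The main obstacle is the third step: the supremum over the continuous time interval requires a uniform-in-$t$ control of a sum over a \emph{random, growing} number of particles, and one must ensure the fluctuations do not conspire across the $e^n$-many particles. The cleanest route is to observe that $N(n) \le e^{2n}$ eventually (from $e^{-t}N(t)\to W_0 < \infty$), so a union bound over $N(n)$ particles, each contributing a Brownian-excursion probability of order $\exp(-c\,t^{2\eps})$ for exiting the margin, still sums to something Borel--Cantelli-summable once $2\eps$-exponential decay beats the $e^{2n}$ particle count. Since $t^{2\eps}$ grows faster than any polynomial but slower than $t$, one should take care that the decay rate in the excursion estimate genuinely dominates the exponential growth in the number of particles; the comfortable way to guarantee this is to note the relevant excursions are of size $\alpha_t/2 = \tfrac12 t^{1/2+\eps} \gg \sqrt{t}$, so the Gaussian cost is $\exp(-c\,t^{2\eps}) $ which, while not itself beating $e^{2n}$ directly, is applied only to the rare over-count event and combined with the expectation bound — alternatively, one reweights by $e^{\lambda X_j}$ and works directly with the martingale $W_\lambda(t)$, whose increments over $[n,n+1]$ form a bounded-in-$L^1$ family, invoking Doob's maximal inequality on the stopped difference $R_\lambda$. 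Once the continuous-time supremum is shown to vanish along with the integer-time subsequence, the stated conclusion $\overline W_\lambda := \lim_t \overline W_\lambda(t)$ exists and equals $W_\lambda$ follows immediately from $\overline W_\lambda(t) = W_\lambda(t) - R_\lambda(t)$ and the existence of $W_\lambda = \lim_t W_\lambda(t)$ from \eqref{E_Wlam}.
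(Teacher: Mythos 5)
Your first two steps coincide with the paper's argument: the many-to-one computation with completion of the square, giving the Gaussian tail $\P(Z>t^{\eps})$, and then Markov plus Borel--Cantelli along a discrete grid, using a half-width window to leave a margin. The genuine gap is in your third, interpolation step. On your ``good'' event, a particle contributing to $R_\lambda(s)$ for $s\in[n,n+1]$ is only known to have been outside the half-window at time $n$ \emph{after having moved by as much as the excursion threshold} $\Delta_n$; its weight $e^{\lambda X_j(s)}$ can therefore exceed $e^{\lambda X_j(n)}$ by a factor $e^{|\lambda|\Delta_n}$, and this factor is nowhere controlled in your proposal. The numbers do not work out for any choice of $\Delta_n$ with unit spacing: for the union bound over the (eventually) $\le e^{2n}$ particles to be summable you need $\Delta_n\gtrsim\sqrt n$ (your quoted cost $\exp(-c\,t^{2\eps})$ for a unit-time excursion of size $\alpha_t/2$ is not the right estimate --- that is the cost of such a deviation over time scale $t$; over a unit interval the cost is $\exp(-c\,t^{1+2\eps})$), but then the weight factor $e^{c|\lambda|\sqrt n}$ overwhelms the bound $e^{-cn^{2\eps}}$ coming from your step two whenever $\eps\le 1/4$, and the proposition is needed precisely for arbitrarily small $\eps$. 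Conversely, keeping the weight factor bounded forces $\Delta_n=O(1)$, and then the union bound fails outright: over a unit interval each of the $\approx e^{n}$ particles moves distance $1$ with probability bounded below, so there \emph{will} be many such excursions. Your fallback, ``Doob's maximal inequality on the stopped difference $R_\lambda$,'' does not repair this: $R_\lambda$ is not a martingale (the paper notes explicitly that $\overline W_\lambda(t)$ is not), and a maximal inequality for $W_\lambda$ gives no control on $\sup_s R_\lambda(s)$.

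The paper resolves exactly this tension by shrinking the grid rather than enlarging the threshold: it takes $t_n=n^{1/3}$ (any exponent in $(0,1/2)$ would do), so the gaps satisfy $t_{n+1}-t_n\asymp n^{-2/3}\to0$. The displacement threshold can then be fixed at $1$, so weights of non-speedy particles change by at most $e^{|\lambda|}$, while the probability that \emph{any} of the $\approx e^{n^{1/3}}$ particles moves distance $1$ within a gap is at most of order $e^{n^{1/3}-cn^{2/3}}$ (Lemma \ref{lem:speedy}), which is summable. With this single modification your outline becomes the paper's proof: the expectation bound at the grid times for the half-window quantity, Borel--Cantelli twice (once for the half-window sums, once for speedy particles), and the deterministic observation that a non-speedy particle outside $S_\lambda(s)$ at $s\in[t_n,t_{n+1}]$ was outside the half-window at time $t_n$ with comparable weight. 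Any rescue of unit spacing would require treating displacement and weight jointly (e.g.\ a tilted computation of $\E[\sup_{s\in[n,n+1]}R_\lambda(s)]$), which your proposal does not carry out.
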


Combining Propositions \ref{prop:mart_lim} and \ref{prop:mart_con} 
we obtain that for $\lambda\in(-\sqrt{2},\sqrt{2})$ 
the limit $\lim_{t\to\infty}\overline W_\lambda(t)$ 
exists and is strictly positive with probability one. 
Since $\overline W_\lambda(t)$ 
only takes into account particles 
within distance $\alpha_t$ of $\lambda t$, 
this implies the {\it presence} 
of particles within distance $\alpha_t$ of $\lambda t$, for all large $t$. 
More formally, for every $\lambda\in(-\sqrt{2},\sqrt{2})$, we have that 
\begin{equation}\label{eq:part_existence}
\P\big(\exists t_0,\, \forall t\ge t_0,\, \exists j\in J(t),\, X_j(t)\in S_\lambda(t)\big)
=1.
\end{equation}
This property will be our main application of the additive martingale. 
We note that \eqref{eq:part_existence} 
together with \eqref{eq:max_bound} 
implies the matching (with \eqref{eq:max_bound} above) 
lower bound for $M(t)$, 
yielding that almost surely 
\begin{equation}\label{eq:max_lim}
\lim_{t\to\infty}\frac{M(t)}{t}=\sqrt{2}.
\end{equation}

Although intuitive, we could not find a reference
for Proposition \ref{prop:mart_con} in the literature. 
As such, we will give a detailed proof, 
based on large deviations of Gaussian variables.
Related results have previously 
been obtained by Biggins~\cite[Corollary~4]{Big92} 
by analytic methods.
The idea of the proof is that at any fixed time 
$W_\lambda(t)-\overline W_\lambda(t)$ 
can be bounded by its expectation.
This will give convergence along some subsequence.
To interpolate between these times we use a bound on 
the maximal displacement of a particle during a fixed length interval, 
implying that $W_\lambda(t)-\overline W_\lambda(t)$ 
does not increase very much during this interval.

Before stating the lemma, 
we recall that the tail of a standard Gaussian satisfies
\begin{equation}\label{eq:Gauss_tail}
1-\Phi(x)
\sim
\frac1x\frac{1}{\sqrt{2\pi}}e^{-x^2/2},
\quad\text{as }x\to\infty.
\end{equation}
In particular, 
$1-\Phi(x)\le e^{-x^2/2}$ for all large $x>0$.

\begin{lemma}\label{lem:speedy}
  A particle in BBM at time $t'$ is \textbf{speedy} in the 
  time interval $[t,t']$ if at some point during $[t,t']$ 
  it reached distance 1 from its position at time $t$.  
  Then, for $\delta$ small enough, 
  the probability that there are any speedy particles 
  in $[t,t+\delta]$ is at most $5e^{t-1/(2\delta)}$.
\end{lemma}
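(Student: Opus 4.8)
The plan is to reduce the statement to a first moment computation. Observe first that, by definition, the speedy particles in $[t,t+\delta]$ form a (random) subset of the particles present at time $t+\delta$: a particle $j\in J(t+\delta)$ is speedy precisely when its ancestral trajectory $(X_j(s))_{s\in[t,t+\delta]}$ satisfies $\sup_{s\in[t,t+\delta]}|X_j(s)-X_j(t)|\ge 1$. Consequently, writing $Z$ for the number of speedy particles at time $t+\delta$, the event that there are any speedy particles is exactly $\{Z\ge1\}$, and Markov's inequality gives $\P(Z\ge1)\le\E[Z]$. Thus it suffices to estimate the expected number of speedy particles.

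To compute $\E[Z]$ I would invoke the many-to-one lemma for BBM. Since each particle splits in two at rate $1$, we have $\E[N(t+\delta)]=e^{t+\delta}$, and the many-to-one lemma expresses the expectation of a sum over $J(t+\delta)$ of a functional of ancestral paths as $e^{t+\delta}$ times that functional evaluated along a single Brownian motion $(B_s)$. Applying this to the indicator that a path deviates by at least $1$ during a window of length $\delta$, and using stationarity of Brownian increments, yields
\[
\E[Z]=e^{t+\delta}\,\P\Big(\sup_{s\in[0,\delta]}|B_s|\ge1\Big).
\]
Equivalently, one can condition on the configuration at time $t$ and use that the $N(t)$ descendant sub-BBMs evolve independently, applying the single-particle many-to-one identity over a window of length $\delta$ to each and then using $\E[N(t)]=e^t$; this gives the same bound.

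It then remains to estimate the single Brownian motion probability. By the reflection principle, $\P(\sup_{s\le\delta}B_s\ge1)=2\P(B_\delta\ge1)=2\big(1-\Phi(1/\sqrt\delta)\big)$, and a union bound over the two sides gives $\P(\sup_{s\le\delta}|B_s|\ge1)\le 4\big(1-\Phi(1/\sqrt\delta)\big)$. Invoking the Gaussian tail bound recorded in \eqref{eq:Gauss_tail}, namely $1-\Phi(x)\le e^{-x^2/2}$ for $x$ large, with $x=1/\sqrt\delta$ (which is large once $\delta$ is small), we obtain $\P(\sup_{s\le\delta}|B_s|\ge1)\le 4e^{-1/(2\delta)}$. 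Combining the displays gives $\E[Z]\le 4e^{\delta}\,e^{t-1/(2\delta)}$, and since $4e^{\delta}\le5$ for all $\delta$ small enough, the claimed bound $\P(Z\ge1)\le 5e^{t-1/(2\delta)}$ follows.

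The computation is essentially routine; the only point requiring genuine care is the many-to-one reduction, where one must correctly track that a \emph{path} functional (the running maximal displacement over the window) is being integrated, rather than an endpoint functional, and confirm that the branching mechanism contributes exactly the factor $e^{t+\delta}$. The remaining work is mere bookkeeping of constants in the regime of small $\delta$.
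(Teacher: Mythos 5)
Your proposal is correct and follows essentially the same route as the paper: a first-moment bound on the number of speedy particles, using $\E[N(t+\delta)]=e^{t+\delta}$, the reflection principle to bound the path-deviation probability by $4(1-\Phi(1/\sqrt{\delta}))\le 4e^{-1/(2\delta)}$, and absorbing the factor $e^{\delta}$ into the constant $5$ for small $\delta$. Your explicit appeal to the many-to-one lemma merely formalizes what the paper states informally (that each particle's ancestral trajectory over the window is a Brownian motion), so there is no substantive difference.
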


\begin{proof}
We estimate the expected number of speedy particles in $[t,t+\delta]$.
The expected number of particles existing at time 
$t+\delta$ is $e^{t+\delta}$.
For each of these, its trajectory over $[t,t+\delta]$ 
is a simple Brownian motion with some starting point.
By the reflection principle, 
the probability that it reaches distance 1 from its starting point 
is at most $4(1-\Phi(1/\sqrt{\delta}))$, 
which for $\delta$ small enough is at most $4e^{-1/(2\delta)}$.
Thus the probability that there exist speedy particles 
is at most $4e^{t+\delta-1/(2\delta)}$,
which implies the claim for $\delta$ small enough.
\end{proof}

\begin{proof}[Proof of Proposition \ref{prop:mart_con}]
Fix a sequence of times $t_n=n^{1/3}$ 
(the exponent $1/3$ is somewhat arbitrary 
but should be less than $1/2$).
To keep calculations clear, we define two processes: 
the process 
\[
Z_t 
= W_\lambda(t)-\overline W_\lambda(t) 
= \sum_{j}e^{\lambda X_j(t)} {\bf 1}_{\{|X_j(t)-\lambda t|>\alpha_t\}}
\]
that we wish to show tends to 0, and a second process 
\[
Z'_t 
= W_\lambda(t)-\overline W_\lambda(t) 
= \sum_{j}e^{\lambda X_j(t)} {\bf 1}_{\{|X_j(t)-\lambda t|>\alpha_t/2\}}
\]
counting particles outside a smaller interval of radius $\alpha_t/2$. 
  
First, we claim that 
\begin{align*}
\E [Z'_t]
&= e^{-\hat\lambda t}\,\E\bigg[\sum_{j:|X_j(t)-\lambda t| > \alpha_t/2} e^{\lambda X_j(t)}\bigg]\\
&=\int_{\{|x-\lambda t|>\alpha_t/2\}}\frac{1}{\sqrt{2\pi t}}e^{-(x-\lambda t)^2/2t}\,dx.
\end{align*}
Indeed, this follows noting that the 
expected number of particles is $e^t$, 
that each particle's position is distributed as a $N(0,t)$ random variable, 
and the value of $\hat\lambda$.
  
The integral corresponds to the probability 
that a $N(\lambda t,t)$ random variable deviates 
by at least $\alpha_t/2$ from its mean. Hence 
\[
\E [Z'_t] 
= 2 [1-\Phi(\alpha_t/2\sqrt{t})] 
\leq 2e^{-\alpha_t^2/8t}
\]
for large enough $t$. 
Therefore, with our choice of $t_n=n^{1/3}$ 
and $\alpha_t = t^{1/2+\eps}$, we have that
\[
\E [Z'_{t_n}] \leq 2e^{-n^{2\eps/3}/8}.
\]
Fix any $0 < \delta < 2\eps/3$.
By Markov's inequality and Borel--Cantelli, 
we almost surely have $Z'_{t_n} \leq e^{-n^\delta}$ 
for all large enough $n$.
  
Next, we wish to bound $Z_t$ 
for $t\in[t_{n},t_{n+1}]$ in terms of $Z'_{t_n}$.
By Lemma \ref{lem:speedy} the probability that any particle moves 
more than distance 1 in $[t_n,t_{n+1}]$ 
is at most $Ce^{n^{1/3}-cn^{2/3}}$, for some constants $c,C>0$.
By Borel--Cantelli, there are no such (speedy) particles 
in $[t_n,t_{n+1}]$ for all $n$ large enough.
We assume this is the case from here on.
Consider now any $s\in[t_n,t_{n+1}]$ and 
particle $X_j$ that contributes to $Z_s$.
This particle is not speedy in $[t_n,t_{n+1}]$.
Therefore, 
$e^{\lambda X_j(s)} \leq e^{|\lambda| + \lambda X_j(t_n)}$, 
where $X_j(t_n)$ is its location at time $t_n$ 
(or else of its ancestor at time $t_n$, if it was created after time $t_n$).
Moreover, since the particle is not speedy, 
it also contributed to $Z'_{t_n}$.
It follows that for $n$ large enough 
we have $Z_s \leq e^{|\lambda|} Z'_{t_n}$, 
and therefore $Z_s\to 0$ as $s\to\infty$.
\end{proof}

\section{Coexistence}\label{sec:coexistence}

In this section we address the question of coexistence in ABBM, 
and prove Theorem \ref{thm:multi_coex}.
For simplicity, we begin with the case of two types.

\begin{thm}\label{thm:coexistence}
For a two-type ABBM started from a finite, 
non-trivial initial configuration, 
there is a positive probability of coexistence. 
\end{thm}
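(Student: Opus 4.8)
The plan is to use the additive martingales at two different speeds to witness the persistent presence of each type in two disjoint regions of $\R$, and then argue that this forces coexistence. Fix two speeds $0<\lambda_1<\lambda_2<\sqrt{2}$, and consider the two-type ABBM started from a finite non-trivial configuration containing at least one positive particle and at least one negative particle. The idea is to compare the process with an auxiliary \emph{unrestricted} system in which annihilation is switched off, i.e. two independent standard BBMs, one generated by the positive initial particles and one by the negative. For the unrestricted positive BBM, equation \eqref{eq:part_existence} (applied via Lemma \ref{lem:additivity} to shift the martingale limits by the initial positions) guarantees that, almost surely, for all large $t$ there is a positive particle within distance $\alpha_t=t^{1/2+\eps}$ of $\lambda_2 t$; similarly the unrestricted negative BBM has, almost surely, a negative particle near $\lambda_1 t$ for all large $t$. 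The point of choosing $\lambda_1<\lambda_2$ is to create a macroscopic spatial separation: the windows $S_{\lambda_1}(t)$ and $S_{\lambda_2}(t)$ are disjoint for large $t$, since $(\lambda_2-\lambda_1)t$ eventually dwarfs $\alpha_t$.

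The key step is a \emph{coupling} (the ``conservative coupling'' promised in the outline) relating the true ABBM to the two independent unrestricted BBMs. The plan is to couple so that the true ABBM dominates the unrestricted systems in the following sense: each surviving particle of a given color in the true process can be matched to a particle of the same color in the corresponding unrestricted BBM occupying the same position, and annihilation in the true process only \emph{removes} particles relative to the unrestricted system. The crucial observation is a \emph{conservation} property: when a positive and a negative particle annihilate, one particle of each color is lost, so the signed imbalance between the two colors in any given region is preserved by annihilation. Concretely, I would track, for the two witnessing windows, the additive-martingale mass of each color and show that annihilation can only transfer mass in a controlled, sign-respecting way. The goal is to conclude that the positive color cannot be entirely eliminated, because its unrestricted witness near $\lambda_2 t$ persists and lies to the right of the negative witness near $\lambda_1 t$; symmetrically the negative color persists via its witness near $\lambda_1 t$.

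The cleanest way to package this is via an order/monotonicity argument combined with the witness property. Since the initial configuration is non-trivial we may use the order-preserving structure of the dynamics (or, in the unordered case, a separating hyperplane argument): I would show that on the almost-sure event that both unrestricted witnesses exist for all large $t$, the true ABBM must retain particles of both colors. Roughly, if (say) the positive color died out at some finite time, then beyond that time the negative BBM evolves as an unrestricted BBM, and one derives a contradiction by exhibiting a positive particle that the coupling forces to survive near $\lambda_2 t$ and that never meets a negative particle, because the negative cloud is concentrated near the slower speed $\lambda_1 t$. Intersecting the two almost-sure witness events yields a positive-probability event on which both colors survive, giving coexistence.

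The main obstacle I anticipate is making the coupling rigorous: annihilation is a genuinely interactive, non-monotone operation, and one must be careful that matching true-process particles to unrestricted-process particles respects the dynamics through branching, diffusion, \emph{and} the removal of annihilated pairs simultaneously. In particular, the naive domination ``true $\le$ unrestricted'' is delicate because annihilation removes particles of \emph{both} colors at once, so one cannot simply say each color is dominated by its free BBM in a pathwise monotone fashion. The heart of the proof will be to set up a coupling in which annihilations are realized as a \emph{thinning} that preserves the existence of the extremal witnessing particles near $\lambda_1 t$ and $\lambda_2 t$, and to verify that these particular witnesses are never the ones annihilated — intuitively because the negative particles sit near $\lambda_1 t$ and the positive witness sits near the larger $\lambda_2 t$, so the relevant particles stay on the correct side of the interface and avoid collision.
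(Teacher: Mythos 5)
There is a genuine gap, and it sits exactly where you anticipated trouble: the transfer of the witness property from the free (non-annihilating) system to the true ABBM. The coupling you describe, in which the true ABBM is realized as a subset of two independent unrestricted BBMs, does exist and is unproblematic (annihilation only deletes particles and their descendants), but the domination goes the wrong way for survival: it bounds the true process \emph{above} by the free one. The particle witnessing $S_{\lambda_2}(t)$ in the free positive BBM, guaranteed by \eqref{eq:part_existence}, may well descend from an ancestor that was annihilated in the true process, so its existence says nothing about the ABBM. Your proposed repair --- that this witness never collides with a negative particle because ``the negative cloud is concentrated near the slower speed $\lambda_1 t$'' --- is false for the free system and circular for the true one. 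The free negative BBM is \emph{not} concentrated near $\lambda_1 t$: by the very same \eqref{eq:part_existence} applied at $\lambda_2$, it almost surely has particles in $S_{\lambda_2}(t)$ for all large $t$ as well; it fills the entire cone $[-\sqrt2\,t,\sqrt2\,t]$. And for the true process, confinement of the negatives to the left of the positive witness is essentially the interface/coexistence statement you are trying to prove, so it cannot be assumed.

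The paper's conservative coupling resolves this with a different construction that you gesture at (your ``signed imbalance is preserved by annihilation'' remark is the right germ) but never build. Instead of deleting annihilated pairs, they are merged into \emph{neutral} particles which continue to move and branch; then genuine positives plus neutrals form \emph{exactly} a single-type BBM, and likewise for negatives. Consequently the difference $W_\lambda^+(t)-W_\lambda^-(t)$ of the two coupled additive martingales equals the signed sum $e^{-\hat\lambda t}\bigl(\sum_i e^{\lambda X_i^+(t)}-\sum_j e^{\lambda X_j^-(t)}\bigr)$ over genuine particles only, the neutral contributions cancelling identically --- this is the cancellation your window-by-window mass tracking cannot deliver, since annihilation does not preserve mass locally. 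The proof then reduces (via the Markov property) to a pair of opposite particles at $\pm a$ with $a$ large, where Lemma \ref{lem:additivity} and Proposition \ref{prop:mart_lim} give $\P(W_\lambda^-<1<W_\lambda^+)\ge 3/4$ for a single fixed $\lambda\in(0,\sqrt2)$; on that event the signed sum is eventually strictly positive, which is impossible if the positive type has died out, so positives survive. The symmetric event at $-\lambda$ handles the negatives, and a union bound gives coexistence with probability at least $1/2$. Note that no claim about \emph{which} particles get annihilated, and no two-speed separation, is ever needed.
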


A key ingredient of our argument is a 
``conservative'' construction of the two-type process, 
in which particles of different types ``merge'' instead of annihilating. 
This coupling was introduced by 
Ahlberg, Griffiths, Janson and Morris~\cite{AGJM} 
in the context of annihilating branching random walks.
One distinction in our context is that in~\cite{AGJM} 
the coupling was used to prove that coexistence is {\it not} possible, 
whereas we use it to prove that coexistence {\it is} possible.
The different behavior stems from the difference in the ambient space.
In~\cite{AGJM} the space is finite (a finite connected graph), 
whereas here it is infinite (the real line).

\subsection{A conservative coupling}
\label{ssec:coupling}

The construction involves a new type
of {\bf neutral} particles, 
which are created when positive and negative particles merge. 

Consider a system of positive, negative and neutral particles in which the 
particles perform independent BBMs
(with children retaining their parent's type).
Particles of equal types do not interact with each other when intersecting paths,
and neutral particles do not interact with particles of the other types.
When a positive and a negative particle collide (and annihilate), 
a neutral particle is created at the point of annihilation. 
In other words, a neutral particle is a pair of
positive and negative particles which 
have stuck together, and then continue to move 
and branch in unison thereafter, independently of everything else.

Consider the above system started from a finite, 
non-trivial configuration of positive
and negative particles, and no neutral particles.
Let $\{X^+_j(t)\}$, $\{X_j^-(t)\}$ and $\{X_j^\circ(t)\}$ 
denote the sets of positive, negative and neutral particles at time $t\ge0$.
Likewise, let $\Xi_t^+$, $\Xi_t^-$ and $\Xi_t^\circ$ 
denote the corresponding counting measures 
(as in~\eqref{eq:counting}).
A fundamental observation from \cite{AGJM} is that, 
by construction, the processes
$(\Xi_t^++\Xi_t^\circ)_{t\ge0}$ 
and $(\Xi_t^-+\Xi_t^\circ)_{t\ge0}$ 
are single-type (non-annihilating) BBMs.

Let $(W_\lambda^+(t))_{t\ge0}$ 
and $(W_\lambda^-(t))_{t\ge0}$ denote the additive martingales 
associated with $(\Xi_t^+ + \Xi_t^\circ)_{t\ge0}$ 
and $(\Xi_t^- + \Xi_t^\circ)_{t\ge0}$. 
Since these are additive martingales of single-type BBMs, 
by Proposition \ref{prop:mart_lim}, the limits
\begin{equation}\label{eq:conservative_limits}
  W_\lambda^\pm:=\lim_{t\to\infty}W_\lambda^\pm(t)
\end{equation}
almost surely exist, and are positive on $(-\sqrt2,\sqrt2)$.

From a comparison of the two martingales we will be 
able to demonstrate the existence of 
positive and negative particles in different regions of the real line.
Of course, the two processes, and hence the associated martingales, 
are closely dependent through the mutual inclusion of the neutral particles.
This poses little difficulty in establishing coexistence, 
but will have to be treated more carefully when analyzing the 
limit speed of the interface in Sections \ref{sec:refined} and \ref{sec:interface} below.

\subsection{Two-type coexistence}

We now show, using the martingales $W_\lambda^\pm(t)$, 
that in a two-type ABBM there is 
positive probability that the two types coexist forever. 

\begin{proof}[Proof of Theorem \ref{thm:coexistence}]
From any non-trivial finite initial condition, there is a 
positive probability that at some later time, there is a 
single pair of particles of opposite types at a given distance apart.
Hence, by the Markov property, translation invariance and symmetry, 
it suffices to establish a positive probability of coexistence 
from an initial configuration consisting of a 
single negative particle at $-a$ and a single positive particle at $+a$, 
for some arbitrarily large $a$.

Fix some $\lambda\in(0,\sqrt{2})$. 
By Lemma \ref{lem:additivity} the martingale limits 
$W_\lambda^\pm$ in~\eqref{eq:conservative_limits} satisfy
\[
W_\lambda^\pm
\stackrel{d}{=}
e^{\pm\lambda a}\,W_\lambda, 
\]
where $W_\lambda$ is a standard martingale limit.  
By Proposition \ref{prop:mart_lim}, 
the limit $W_\lambda$ is almost surely positive.
It follows that, for any large $a>a_0$, we have that 
\[
\P(W_\lambda^-<1<W_\lambda^+)\ge3/4.
\]
Moreover, on this event, for all large $t$, 
we have that
\[
W^+_\lambda(t) - W^-_\lambda(t) 
= e^{-\hat\lambda t} \bigg(\sum_i e^{\lambda X^+_i(t)} - \sum_j e^{\lambda X^-_j(t)} \bigg)
> 0.
\]
However, this can only happen if positive particles 
are present in the system for all large $t$, 
that is, if the positive particles survive.
(In fact, by Proposition \ref{prop:mart_con}, this 
more specifically demonstrates the 
existence of positive particles in the vicinity of $\lambda t$, 
which will be useful later on, for the interface speed.)

By symmetry, we also have
\[
\P(W^-_{-\lambda} > 1 > W^+_{-\lambda})\ge3/4. 
\]
On this event, the negative particles survive (in the vicinity of $-\lambda t$). 
Consequently, for large $a$, the probability of coexistence is at least $1/2$.
\end{proof}

\subsection{Multi-type coexistence}
\label{S_multicoex}

Next, we generalize to the case
of a $k$-type ABBM. 

\begin{proof}[Proof of Theorem \ref{thm:multi_coex}]
Consider ABBM with an initial (finite
and non-trivial) configuration of $k\ge2$ types.
Either $k=2\ell$ or $k=2\ell+1$, for some $\ell\ge1$.
For convenience, 
we label the $k$ types using the index set  
\[
I_k
=
\begin{cases}
\{\pm1,\ldots,\pm\ell\}&k=2\ell\\
\{0,\pm1,\ldots,\pm\ell\}&k=2\ell+1.
\end{cases}
\]

Let $\Xi^i(t)$ be the counting measures 
for particles of type $i$ at time $t$.  
We describe an extended coupling similar 
to the one used in the proof of Theorem \ref{thm:coexistence} above.
Instead of a single type of neutral particle, as before, 
we now have ${k\choose 2}$ types of neutral particles, 
which we denote by the sets $\{i,j\}$ with $i\neq j\in I_k$.
When particles of types $i\neq j$ collide, 
instead of annihilating as usual, 
they combine to form a neutral particle of type $\{i,j\}$.  
These new particles are neutral in the sense that they 
do not interact with any other particles, 
however, they do continue to perform BBM, 
with neutral offspring of the same type $\{i,j\}$.  
  
Let $\Xi^{\{i,j\}}_t$ be the counting measures for 
neutral particles of type $\{i, j\}$ at time $t\ge0$.
Observe that, for any $i\in I_k$, 
\begin{equation}\label{E_hatXi}
\tilde  \Xi^i_t 
:= \Xi^i_t + \sum_{j:j\neq i} \Xi^{\{i,j\}}_t
\end{equation}
is the counting measure of a single-type BBM.
Let 
\[
W^i_\lambda(t) 
:= e^{-\hat\lambda t} \int e^{\lambda x} d\tilde \Xi^i_t(x)
\]
denote the corresponding additive martingale, 
and $\tilde W^i_\lambda$ its almost sure limit.
  
We will use these martingales to prove that 
coexistence occurs with positive probability, 
from a carefully constructed, 
``bell-shaped'' initial configuration, as in Figure \ref{F_ktype}.  
The key observation is that, by \eqref{E_hatXi}, 
we have for $t\ge0$ that
\[
\Xi^i_t
\geq 
\tilde\Xi^i_t - \sum_{j:j\neq i} \tilde\Xi^j_t, 
\]
since clearly each $\Xi^{\{i,j\}}_t\le \tilde\Xi^j_t$. 
Therefore, on the event that for every $i$ 
there is some $\lambda_i$ so that
\begin{equation}\label{E_Wi}
W^i_{\lambda_i} > \sum_{j:j\neq i}  W^j_{\lambda_i}, 
\end{equation}
it follows that all types survive. 
  
\begin{figure}[h!]
\centering
\includegraphics[scale=0.375]{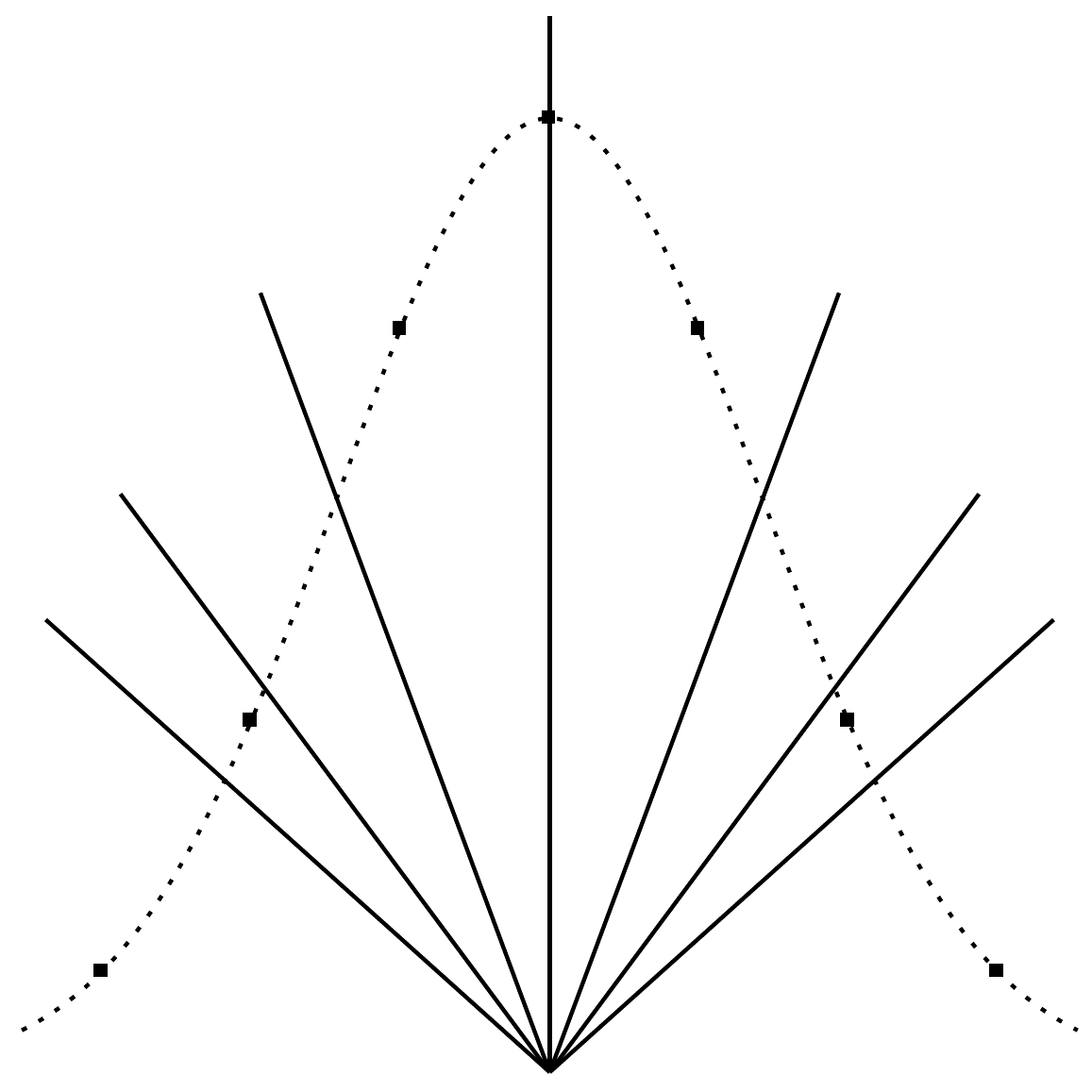}
\caption{The number of particles $n_i$ at positions $a_i$
are chosen along a bell-shaped curve in such a way that, 
with positive probability, each type $i$ survives in 
the vicinity of $\lambda_i t$.
In this figure, $k=7$ and $\delta=1$. 
}
\label{F_ktype}
\end{figure}

We will now describe an initial configuration 
for which this holds with positive probability for the choice
\[
\lambda_i = \frac{\sqrt{2}i}{\ell+1}.
\]
Fix a small $\eps>0$ and a large $\delta\gg 1$.
Note that, permuting labels if necessary, 
there is a positive probability that the system 
will at some point transition to a configuration 
consisting of a total of $n=\sum_{i=1}^k n_i$ particles, 
with
\[
n_i=\exp\left(\frac{\delta(\ell^2-i^2)}{\ell+1}\right)
\]
particles of type $i$, all within distance $\eps$ of the point 
\[
a_i=\sqrt{2}i\delta.
\]
We may assume that $\delta=(\ell+1)\log m$ 
for some positive integer $m$ so that the $n_i$ are integers.

These parameters have been chosen so that, 
for all $i\neq j\in I_k$, 
\[
\log \frac{n_ie^{\lambda_i a_i}}{n_je^{\lambda_i a_j}}
=\frac{\delta (i-j)^2}{\ell+1}\ge\frac{\delta}{\ell+1}.
\]
Hence, for any $i\in I_k$, as $\delta\to\infty$, 
\begin{equation}\label{E_preWi}
n_i e^{\lambda_i a_i}\gg \sum_{j:j\neq i} n_j e^{\lambda_i a_j}.
\end{equation}

By Lemma \ref{lem:additivity}, each martingale limit $ W_\lambda^i$ 
is equal in distribution to the sum of $n_i$ 
independent standard martingale limits, 
distributed as $e^{\lambda x}\,W_\lambda$, 
where $x$ with $|x-a_i|<\eps$ 
is the position of each respective particle.
These differ by at most a bounded factor of 
$e^{(a_i\pm \eps)\lambda}$ from $W_\lambda$.
Therefore, by the law of large numbers for IID copies 
of $W_{\lambda_j}$, we may find $c>0$ such that, 
for each pair of $i,j\in I_k$ and all large $\delta$, 
we have
\[
\P\Big(cn_ie^{\lambda_j a_i}\le 
W_{\lambda_j}^i\le \frac{1}{c}n_ie^{\lambda_ja_i}\Big)>1-\frac{1}{2k^2}.
\]
Therefore, for a fixed $i\in I_k$, 
we obtain together with~\eqref{E_preWi} that
\[
\P\bigg(  W^i_{\lambda_i} > \sum_{j:j\neq i}  W^j_{\lambda_i}\bigg)
\ge
1-\frac{1}{2k}, 
 \]
for all large $\delta$. Consequently, for all large $\delta$, 
this holds simultaneously for all $i\in I_k$ with probability at least $1/2$.
On this event, by \eqref{E_Wi}, 
each type $i$ survives in the vicinity of $\lambda_i t$. 
\end{proof}

\section{Non-equality of martingale limits}\label{sec:refined}

Having established the possibility of coexistence, 
our next goal is to examine the asymptotic behavior 
of the interface in a two-type ABBM.
In proving Theorem \ref{thm:interface},
in Section \ref{sec:interface} below, 
our main tool once again 
will be the additive martingales 
used in the conservative coupling in Section \ref{sec:coexistence}. 
However, in order to obtain such a result, 
the conservative coupling 
will need to be refined,
and this is the topic of the current section. 

Note that, by Proposition \ref{prop:mart_con}, 
an inequality of martingale limits 
$W_\lambda^+ > W_\lambda^-$ implies existence 
of positive particles with asymptotic speed 
$\lambda$ (while the opposite inequality implies 
existence of negative particles with asymptotic speed $\lambda$).
As such, an important step towards Theorem \ref{thm:interface} 
will be to prove that, for almost every value of $\lambda$, 
the martingale limits are unequal.
While the martingale limits are known to be continuous, 
the limits $W^+_\lambda$ and $W^-_\lambda$ are not independent, 
so this inequality requires justification.

An analogous statement holds also for the derivative martingales. 
Let $D_\infty^+$ and $D_\infty^-$ denote the limits of the derivative 
martingales associated with the BBMs $(\Xi_t^++\Xi_t^\circ)_{t\ge0}$ 
and $(\Xi_t^-+\Xi_t^\circ)_{t\ge0}$.
To state the result, let $\mathcal{S}$ denote the event of {\bf survival}, 
i.e., that for all large times there are either positive 
or negative particles present in the system.
Note that $\mathcal{C}\subseteq\mathcal{S}$, 
since coexistence $\mathcal{C}$
requires the survival of both types
of particles. 

\begin{prop}\label{prop:equality}
  Consider the conservative coupling of ABBM with a finite, 
  nontrivial initial configuration of particles.
  For every $\lambda\in(-\sqrt{2},\sqrt{2})$,
  \[
    \P\big(\{W_\lambda^+=W_\lambda^-\}\cap \mathcal{S}\big) = 0.
  \]
  Moreover, the same holds for the derivative martingales:
  \[
    \P\big(\{D_\infty^+=D_\infty^-\}\cap \mathcal{S}\big) = 0.
  \]  
\end{prop}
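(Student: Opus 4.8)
The plan is to show, for a fixed deterministic time $t_0>0$, that the conditional law given $\mathcal{F}_{t_0}$ of each difference $W_\lambda^+-W_\lambda^-$ and $D_\infty^+-D_\infty^-$ has no atom at $0$ on the event $\mathcal{A}_{t_0}$ that at least one non-neutral particle is present at time $t_0$. Since a non-neutral particle, once absent, never reappears (neutral particles produce only neutral offspring), survival forces non-neutral particles to be present at every time, so $\mathcal{S}\subseteq\mathcal{A}_{t_0}$; hence conditional atomlessness at $0$ on $\mathcal{A}_{t_0}$ yields $\P(\{W_\lambda^+=W_\lambda^-\}\cap\mathcal{S})=0$, and likewise for the derivative martingales.

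First I would record the decomposition coming from the conservative coupling. Conditionally on $\mathcal{F}_{t_0}$, each of $\tilde\Xi^+$ and $\tilde\Xi^-$ is a single-type BBM, so by the branching property its additive martingale limit splits over the time-$t_0$ ancestors as $W_\lambda^\pm=e^{-\hat\lambda t_0}\sum_i e^{\lambda x_i}W_\lambda^{(i)}$, the sum running over the respective charged and neutral particles present at $t_0$, with the $W_\lambda^{(i)}$ independent standard limits within each BBM. A neutral particle born at $(z,s)$ contributes the identical quantity $e^{-\hat\lambda s+\lambda z}W_\lambda^{(\mathrm{neut})}$ to both $W_\lambda^+$ and $W_\lambda^-$, so all neutral contributions cancel in the difference, leaving $W_\lambda^+-W_\lambda^-=e^{-\hat\lambda t_0}\big(\sum_{\text{positive }x}e^{\lambda x}W_\lambda^{(x)}-\sum_{\text{negative }x}e^{\lambda x}W_\lambda^{(x)}\big)$. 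The same computation for the derivative martingale gives $D_\infty^\pm=e^{-2t_0}\sum_i e^{\sqrt2 x_i}D_\infty^{(i)}$, where the residual linear term drops out precisely because $W_{\sqrt2}=0$, and again the neutral contributions cancel in $D_\infty^+-D_\infty^-$. In each case the positive subtree limits are mutually independent, as are the negative ones, and each is marginally a standard $W_\lambda$ (resp.\ $D_\infty$).

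Next I would establish that the standard limits are atomless. Splitting at the first branch time $\tau\sim\mathrm{Exp}(1)$ of the ancestral particle gives $W_\lambda\stackrel{d}{=}e^{-\hat\lambda\tau+\lambda B_\tau}\big(W_\lambda^{(1)}+W_\lambda^{(2)}\big)$ and, using $W_{\sqrt2}=0$ once more, $D_\infty\stackrel{d}{=}e^{-2\tau+\sqrt2 B_\tau}\big(D_\infty^{(1)}+D_\infty^{(2)}\big)$, where in each case the prefactor is an atomless positive random variable independent of the strictly positive sum in parentheses. Writing the limit as a product $AS$ with $A$ atomless, $A$ independent of $S>0$, we obtain $\P(AS=c)=\E\big[\P(A=c/S\mid S)\big]=0$ for every $c\ne0$, while $\P(W_\lambda=0)=0$ for $|\lambda|<\sqrt2$ (Proposition \ref{prop:mart_lim}) and $\P(D_\infty=0)=0$ (Lalley--Sellke); hence $W_\lambda$ and $D_\infty$ have no atoms.

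It remains to produce, conditionally on $\mathcal{F}_{t_0}$ and on $\mathcal{A}_{t_0}$, a single summand that is atomless \emph{and} independent of all the others, for then the difference is an atomless variable plus an independent remainder and so has no atom at $0$. This is the main obstacle, because a positive subtree and a negative subtree are \emph{not} independent: their descendants may still collide and thereafter share a common neutral future. To break this dependence I would use the refined coupling of this section to construct an independent free BBM which, with positive probability, is realized as an escaping charged lineage of the ABBM — intuitively, a descendant of the rightmost charged particle at $t_0$ whose offspring run off to $+\infty$ and, having no opposite-type particle ever to their right, never collide and evolve freely. Its additive (resp.\ derivative) martingale limit is then an independent standard copy, atomless by the previous step, and supplies the required independent atomless summand in $W_\lambda^+-W_\lambda^-$ (resp.\ $D_\infty^+-D_\infty^-$). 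The delicate point, and exactly where the refined coupling does its work, is to realize such an escaping lineage as a genuinely independent free BBM cleanly separated from the interacting bulk; granting this, conditional atomlessness at $0$ follows and both claims are proved.
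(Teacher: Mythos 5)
Your first two steps are sound---the decomposition of $W_\lambda^\pm$ and $D_\infty^\pm$ over the time-$t_0$ particles with cancellation of the neutral contributions, and the atomlessness of the standard limits $W_\lambda$ and $D_\infty$---but the reduction you build on them fails. The statement you set out to prove, that conditionally on $\mathcal{F}_{t_0}$ the law of $W_\lambda^+-W_\lambda^-$ has no atom at $0$ on all of $\mathcal{A}_{t_0}$, is false. Whenever both signs are present at time $t_0$ there is positive conditional probability that every charged particle is eventually neutralized (particles can branch so as to match numbers and then annihilate pairwise); from that moment on $\Xi_t^++\Xi_t^\circ=\Xi_t^-+\Xi_t^\circ$, so that $W_\lambda^+=W_\lambda^-$ and $D_\infty^+=D_\infty^-$ exactly. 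Hence the conditional law \emph{always} carries an atom at $0$ of mass at least $\P(\mathcal{S}^c\mid\mathcal{F}_{t_0})>0$. What the proposition asserts is that this atom is carried entirely by $\mathcal{S}^c$, and that is essentially the proposition itself: your reduction is circular.

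The same gap reappears in your key step. You need one summand that is atomless \emph{and} independent of all the others, and the ``escaping lineage'' cannot supply it: the event that descendants of the rightmost charged particle never meet opposite-type particles is a joint function of that subtree and of the bulk (whether approaching opposite particles are screened by other same-type particles is itself an interaction event), so it does not factor into an event about an independent free BBM intersected with an event about the remaining randomness; conditioning on it both biases the subtree's law away from a standard BBM and destroys the independence you invoke---your phrase ``granting this'' is precisely the content of the proposition. Moreover, escape has at best positive probability, and that probability is not bounded below uniformly in the time-$t_0$ configuration (consider many negative particles arbitrarily close to the rightmost positive one), so even a successful construction would only give a configuration-dependent bound $\P(\{W_\lambda^+=W_\lambda^-\}\cap\mathcal{S}\mid\mathcal{F}_{t_0})\le 1-c(\Xi_{t_0})$ with $c$ degenerating as the number of particles grows, which is not enough to conclude. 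The paper's proof sidesteps both obstructions: it proves no atomlessness and needs no independence. It couples two ABBMs, one with its first branching event suppressed (Lemma \ref{lma:processes}), observes that the two martingale differences cannot both vanish when $\tau_1<\infty$ because they differ by the a.s.\ positive limit $\hat W_\lambda$ of the marked BBM, bounds the total variation between the two copies by $e^{-1}$ \emph{uniformly} over initial configurations (Lemma \ref{lma:distance}), deduces the uniform bound \eqref{eq:limit_bound}, and finishes with L\'evy's 0--1 law and the Markov property. That uniformity is exactly what an escape-based argument lacks, and it is why the suppressed-branching coupling, rather than a within-realization independence argument, is the mechanism that works.
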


The key additional ingredient in the proof of this proposition is a second, 
``enhanced'' coupling, inspired by~\cite{AGJM}. 
In this coupling, there are two closely related copies of ABBM,
constructed in such a way that
$\{W_\lambda^+=W_\lambda^-\}\cap \mathcal{S}$ 
cannot occur in both instances.
This allows us to bound the probability of this event away from $1$, 
and then the result follows by L\'evy's 0--1 law.

\subsection{An enhanced coupling}\label{ssec:refined}

As discussed above, we will require a refinement 
of the conservative coupling introduced in Section \ref{sec:coexistence}.
The key idea is to consider two copies of ABBM,  
which are initially identical and evolve together.
However, once the first branching event occurs, 
it is suppressed in one process, and allowed in the other.
This creates a discrepancy between the two systems, 
which we track by marking certain particles.
Instead of working with two ABBMs, the coupling uses a single ABBM, 
with the addition of marked particles 
denoting the difference between the two processes.
We proceed with formalizing this 
construction below.

The enhanced coupling is realized by a system 
of positive, negative and neutral particles, 
where each particle 
is also either {\bf marked} or {\bf unmarked}.
(Marked/unmarked particles give birth
to marked/unmarked particles.)
By the construction below, 
all marked particles  
will be either positive or negative; 
which of the two is determined by chance.
Moreover, marked particles survive forever, 
and evolve as a single-type BBM (without annihilation).

As before, all particles perform BBM,  
with unmarked particles interacting as usual, 
with positive and negative particles forming 
neutral particles upon meeting.
(In fact, \emph{unmarked} neutral particles 
can at this point be ignored, 
but \emph{marked} neutral particles will matter.)
Marked particles do not interact with each other, 
since marked particles of both signs 
will never be present at the same time.
On the other hand, the way in which 
marked particles interact with unmarked particles 
will depend on whether the first marked particle 
that appears in the system is positive or negative. 

{\bf Case 1 {\rm (positive marked particles)}.}
Suppose that marked positive and neutral, 
but no marked negative, 
particles are present in the system at some point. 
The marked particles interact with other
particles as follows. 
\begin{enumerate}
\item[(i)] Marked positive particles do not interact 
with unmarked positive/neutral particles. 
A marked positive particle and an unmarked negative particle 
form a marked neutral particle.
\item[(ii)] Marked neutral particles do not interact 
with unmarked negative/neutral particles. 
A marked neutral particle and an unmarked positive particle 
form an unmarked neutral particle and a marked positive particle.
\end{enumerate}

The last interaction can be thought of as the mark 
being transferred from the neutral to the unmarked positive particle.
Alternatively, if we think of a marked neutral 
particle as a marked positive and unmarked negative pair stuck together, 
then the last interaction may be seen as the unmarked positive 
particle taking the place of the marked positive in the pair, 
and the marked positive is set free.

{\bf Case 2 {\rm (negative marked particles)}.}
Suppose instead that marked negative and neutral, 
but no marked positive, particles appear at some point in the system.
The interactions between marked and 
unmarked particles are the same as Case 1, 
with positive/negative switching roles.

As we shall no longer care about unmarked neutral particles, 
any such particles that appear as a result of the above interactions 
can thereafter be ignored in both cases.

As noted, this construction  
can be seen as two coupled ABBMs.
One is obtained by simply ignoring all marks, 
and the other by deleting all marked positive 
(assuming Case 1, as Case 2 is symmetric) particles, 
thereby also 
converting marked neutral particles into negative particles.
It is easy to verify that each of these two projections map 
the refined dynamics described above to the usual ABBM dynamics.

As before we shall denote the counting measures associated 
to the configurations of unmarked positive 
and unmarked negative particles present at time $t$ 
by $\Xi_t^+$ and $\Xi_t^-$, respectively.
(Unmarked neutral particles can be ignored.)
In addition, we denote the counting measures 
associated with the configurations of marked positive, 
negative and neutral particles present at time $t$ 
by $\hat\Xi_t^+$, $\hat\Xi_t^-$ and $\hat\Xi_t^\circ$, respectively.
The construction will be such that only one of marked 
positive or marked negative will ever occur, 
so that one of $\hat\Xi_t^+$ and $\hat\Xi_t^-$ (possibly both) 
will be zero for {\it all} $t\ge0$.

In order to use the refined coupling, 
we need to also create a first marked particle, 
which will be the ancestor of all subsequent marked particles.
To this end, we will modify the behavior of the process at a 
random time $\tau_1$, as described below.
We proceed and construct a system of marked and unmarked 
particles evolving from a finite initial configuration, 
by describing its evolution for a random amount of time, 
after which we let the system evolve on its own as described above.
Let $(\Xi_0^+,\Xi_0^-)$ denote any finite, 
non-trivial configuration of positive and negative particles.
All particles are initially unmarked, and then
(if ever) marked particles will appear in the system 
after a certain random time.
We will refer to particles that are either unmarked positive, 
unmarked negative or marked neutral as {\bf active}.
For $t\ge0$, let $\mu(t)$ denote the number of 
active particles present at time $t$.

As time starts, let the particles present evolve 
according to independent Brownian motion, 
without branching but with annihilation 
(of unmarked positive and negative particles).
Let $\tau_1$ denote the first arrival time in 
an inhomogeneous Poisson process on 
$[0,\infty)$ with intensity measure $\mu(t)$. 
Note that $\mu(t)$ can decrease through annihilation, 
but cannot yet increase.
Note also that $\tau_1$ has the law of the time of the first branching event.
At time $\tau_1$, choose an active particle uniformly at 
random and position a marked particle of the same sign at the same location.
(Before time $\tau_1$ there are no marked particles present, 
so the particle chosen is unmarked.)
Note that $\tau_1=\infty$ in the case that all particles die out before $\tau_1$.
In this case the system is empty apart from unmarked neutral particles, 
and so the construction effectively terminates. 
Note also that if we ignore the marks, 
the resulting system is the same as the regular ABBM up 
until time $\tau_1$. 
After time $\tau_1$, the system evolves 
according to the dynamics of the refined coupling described above.

\subsection{Properties of the enhanced coupling}

The key claim stated below is that the coupling above 
is a coupling of two versions of the ABBM, 
where one has its first branching event suppressed.
In addition to this, we are also 
interested in the evolution of marked particles, 
as these particles are the difference between the two versions.

Let $A^+$ (resp.\ $A^-$) denote the event that $\tau_1$ 
is finite and that the particle chosen at time 
$\tau_1$ is positive (resp.\ negative).
The processes we shall consider are the following:
\begin{align*}
\Xi'&:=\big(\Xi_t^++\hat\Xi_t^+,\Xi_t^-+\hat\Xi_t^-\big)_{t\ge0},\\
\Xi''&:=\big(\Xi_t^++\hat\Xi_t^\circ\cdot{\bf 1}_{A^-},\Xi_t^-+\hat\Xi_t^\circ\cdot{\bf 1}_{A^+}\big)_{t\ge0},\\
\hat\Xi&:=\big(\hat\Xi_t^++\hat\Xi_t^-+\hat\Xi_t^\circ\big)_{t\ge0}.
\end{align*}
Their distributional properties are summarized in the following lemma, 
and follow immediately from the construction. 

\begin{lemma}\label{lma:processes}
Consider the enhanced coupling started 
from a non-trivial, finite configuration.
\begin{enumerate}
\item[(a)] The law of $\Xi'$ 
equals that of an ABBM started from $(\Xi_0^-,\Xi_0^+)$.
\item[(b)] The law of $\Xi''$ 
equals that of an ABBM started from $(\Xi_0^-,\Xi_0^+)$, 
which has its first branching event, 
at time $\tau_1$, suppressed.
\item[(c)] Conditioned on $\tau_1<\infty$, 
the law of the restriction of $\hat\Xi$ from time $\tau_1$ onwards 
equals that of a BBM 
started from $\hat\Xi_{\tau_1}^++\hat\Xi_{\tau_1}^-$.
\end{enumerate}
\end{lemma}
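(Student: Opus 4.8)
The plan is to treat all three parts as verifications that explicit deterministic \emph{projection maps}, applied pathwise to the single realization of the enhanced coupling, transform its dynamics into the claimed target dynamics. Since the spatial motion and the rate-$1$ binary branching of each individual particle are driven by its own independent randomness and are unaffected by any relabeling or deletion of particles, it suffices to check that each \emph{interaction} (collision) rule of Section~\ref{ssec:refined} descends correctly under each map. I would introduce three such maps: the map $\pi'$ that forgets all marks (so a marked or unmarked positive becomes positive, likewise for negative, while all neutrals are deleted); the map $\pi''$ that, on the event $A^+$, deletes every marked positive and relabels every marked neutral as negative (and symmetrically on $A^-$, which is exactly what the indicators $\mathbf 1_{A^\pm}$ in the definition of $\Xi''$ encode); and the map $\hat\pi$ retaining only the marked particles and forgetting their sub-type. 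With this set-up $\Xi'=\pi'(\Xi)$, $\Xi''=\pi''(\Xi)$ and $\hat\Xi=\hat\pi(\Xi)$, and each part reduces to a short check of the handful of transition rules.

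For part (a) I would apply $\pi'$. Before $\tau_1$ the particles perform annihilating Brownian motion with no branching; since $\mu(t)$ equals the number of mark-forgotten particles, $\tau_1$ is exactly the time of the first branching at the correct total rate, and placing a same-sign marked particle on top of the chosen one is, under $\pi'$, precisely a binary branching. After $\tau_1$: unmarked positive–negative collisions produce ignored neutrals, hence annihilate; interaction~(i) sends a counted positive and a counted negative to an ignored marked neutral, again an annihilation; interaction~(ii) sends a counted positive (the unmarked one) to a counted positive (the freed marked one) and is therefore invisible; and every branching descends to a branching. Thus $\pi'(\Xi)$ obeys exactly the ABBM rules from the initial configuration $(\Xi_0^-,\Xi_0^+)$, which is (a).

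For part (b) I would apply $\pi''$ on $A^+$ (the case $A^-$ being symmetric). Before $\tau_1$ there are no marks, so $\Xi''$ and $\Xi'$ agree. At $\tau_1$ the newly created marked positive is deleted by $\pi''$, so the chosen particle is \emph{not} duplicated: this is the single suppressed branching. After $\tau_1$ the same rules now descend to ABBM: interaction~(i) is invisible (the marked positive is already deleted, and the unmarked negative simply becomes the marked neutral, recounted as the same negative), whereas interaction~(ii) sends a counted negative (the marked neutral) together with a counted positive (the unmarked one) to nothing, i.e.\ an annihilation. Rule~(ii) is designed precisely so that this holds. Since all remaining particles — the unmarked ones and the marked neutrals read as negatives — continue to branch at rate $1$, $\Xi''$ is an ABBM with only its first branching, at $\tau_1$, suppressed.

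For part (c) the key point is that marked particles never annihilate and never interact with one another, while interactions~(i) and~(ii) merely toggle the sub-type of a single marked particle (positive $\leftrightarrow$ neutral) without altering its continuous trajectory or its branching clock. Hence under $\hat\pi$ the marked population changes only through rate-$1$ binary branching along Brownian paths, so it is a single-type BBM; conditioned on $\tau_1<\infty$ it is issued from the lone marked particle placed at time $\tau_1$, namely $\hat\Xi_{\tau_1}^++\hat\Xi_{\tau_1}^-$. I expect no serious obstacle, as the authors note that everything follows from the construction; the only genuinely delicate point, which I would verify with care, is the triple consistency of the mark-transfer rule~(ii) — it must be simultaneously invisible to $\pi'$, an annihilation for $\pi''$, and a mere relabeling for $\hat\pi$ — together with the correct handling of the $A^+/A^-$ case split.
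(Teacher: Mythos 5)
Your proposal is correct and follows essentially the same route as the paper: the paper states that the lemma "follows immediately from the construction," having already observed in Section~\ref{ssec:refined} that the enhanced coupling projects onto two ABBMs (by forgetting marks, respectively by deleting marked positives and relabeling marked neutrals as negatives), which are exactly your maps $\pi'$ and $\pi''$, with $\hat\pi$ handling the marked BBM. Your write-up simply makes explicit the rule-by-rule verification (including the strong Markov concatenation of trajectories at collision times and the role of $\mu(t)$ in identifying $\tau_1$ as the first/suppressed branching time) that the paper leaves to the reader.
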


From Lemma \ref{lma:processes}, 
we see that the laws of 
$\Xi'$ and $\Xi''$ are not equal.
However, they
do not differ by much, since 
it is only a single branching event suppressed at a random time 
$\tau_1$ in $\Xi''$ that causes the difference. 
This observation is key to the following lemma.

\begin{lemma}\label{lma:distance}
For any initial configuration of particles, 
the total variation distance between the laws of $\Xi'$ and $\Xi''$ 
is bounded by $e^{-1}$.
\end{lemma}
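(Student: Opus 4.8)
The plan is to bound the total variation distance directly, by constructing a coupling of the two laws that succeeds with probability at least $1-e^{-1}$; this is independent of the enhanced coupling and works for an arbitrary initial configuration. By Lemma \ref{lma:processes}, the relevant laws are those of an ABBM ($\Xi'$) and of an ABBM with its first branching suppressed ($\Xi''$). These differ only in the timing of the first branching, and the idea is to reduce the problem to a one-dimensional comparison of the distribution of that time in the two processes.

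First I would isolate the dynamics shared by the two processes before any branching takes place. Let $\omega$ denote the process in which the initial particles only diffuse and annihilate, never branching, let $\mu(t)$ be its number of active particles, and set $\Lambda(t):=\int_0^t\mu(s)\,ds$. Conditionally on $\omega$, branching events occur as a rate-one Poisson process run in the time-change $\Lambda$. Both $\Xi'$ and $\Xi''$ follow $\omega$ up to their first genuine branching: in $\Xi'$ this happens at the first Poisson arrival, at $\Lambda$-time $\tilde U\sim\mathrm{Exp}(1)$, whereas in $\Xi''$ the first arrival is discarded, so the first genuine branching occurs at the second arrival, at $\Lambda$-time $\tilde V\sim\mathrm{Gamma}(2,1)$. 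The point of passing to the $\Lambda$-clock is that these two laws are fixed and do not depend on $\omega$ or on the initial configuration.

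The coupling then has three ingredients: run a single copy of $\omega$; given $\omega$, use a maximal coupling of the pair $(\tilde U,\tilde V)$; and on the event $\{\tilde U=\tilde V\}$ let both processes branch the same active particle at the same instant and evolve together thereafter. On this event the two trajectories coincide — if the common value is at most $L:=\Lambda(\infty)$ both branch identically at that moment and are coupled forward, while if it exceeds $L$ neither process ever branches and both simply equal $\omega$. Hence the coupling can fail only on $\{\tilde U\neq\tilde V\}$, so that
\[
d_{\mathrm{TV}}\big(\mathrm{law}(\Xi'),\mathrm{law}(\Xi'')\big)\le \P(\tilde U\neq\tilde V)=d_{\mathrm{TV}}\big(\mathrm{Exp}(1),\mathrm{Gamma}(2,1)\big).
\]
Comparing the densities $e^{-t}$ and $te^{-t}$, which cross at $t=1$, a direct computation gives
\[
d_{\mathrm{TV}}\big(\mathrm{Exp}(1),\mathrm{Gamma}(2,1)\big)=\int_1^\infty (t-1)e^{-t}\,dt=e^{-1},
\]
which is the claimed bound.

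The step requiring the most care is the time-change bookkeeping. One must check that, conditionally on the branchless trajectory $\omega$, the first-branching times are genuinely $\mathrm{Exp}(1)$ and $\mathrm{Gamma}(2,1)$ in the $\Lambda$-clock and that this reproduces the correct marginal laws of $\Xi'$ and $\Xi''$, including the degenerate regime in which all particles annihilate before branching, so that $\mu$ and hence the branching rate fall to zero (captured above by the case $\tilde U=\tilde V>L$, where both processes agree automatically). Working throughout in the $\Lambda$-clock, where the two holding-time laws are exactly exponential and Gamma and carry no dependence on the configuration, is precisely what makes the bound uniform over all initial data; verifying that the post-branching evolutions can be run together once the branching time and the branching particle have been matched is then routine.
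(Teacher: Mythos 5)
Your proof is correct and takes essentially the same approach as the paper: both arguments reduce the problem, via the time change $\Lambda(t)=\int_0^t\mu(s)\,ds$, to the total variation distance between the first and second arrival times of a rate-one homogeneous Poisson process, and compute that this distance equals $e^{-1}$. The only cosmetic differences are that you evaluate the distance by integrating the density difference $(t-1)e^{-t}$ over $[1,\infty)$ where the paper takes the supremum of the tail-probability difference $te^{-t}$, and you absorb the extinction case (all particles annihilating before any branching) into the coupling event $\{\tilde U=\tilde V\}$ where the paper notes separately that truncating mass to an atom at $\infty$ can only decrease the total variation distance.
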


\begin{proof}
By Lemma \ref{lma:processes}, the laws of the two processes 
$\Xi'$ and $\Xi''$ correspond to those of ABBMs 
in which the first branching event is allowed
in $\Xi'$ and suppressed in $\Xi''$.
The time of the first branching event in the first system is $\tau_1$, 
which is the first arrival in a Poisson process with intensity $\mu(t)$.
Recall that $\mu(t)$ is the number of 
active (unmarked positive, unmarked negative, and marked neutral) 
particles present at time $t$. 
Marked positive particles are descendants 
of the first particle to branch at time $\tau_1$ (before then there are no marked 
particles). These particles are suppressed 
in the second system, and not counted by $\mu(t)$. 
Thus $\mu(t)$ equals the total branching
rate in the second system. 
Let $\tau_2$ be the next arrival time, after $\tau_1$, 
in a Poisson process with intensity $\mu(t)$.
Then the first branching time 
in the second, suppressed process is given by $\tau_2$. 

It follows that the total variation distance 
between the laws of the processes $\Xi'$ and $\Xi''$ 
is bounded by the total variation distance 
between the laws of $\tau_1$ and $\tau_2$.
Formally, we may construct (yet another) coupling 
$(\bar\Xi',\bar\Xi'')$ of the two processes $\Xi'$ and $\Xi''$ 
as follows:
Let $\delta$ denote the total variation distance 
between the random variables $\tau_1$ and $\tau_2$, 
and let $(\bar\tau_1,\bar\tau_2)$ be a coupling of $\tau_1$ and $\tau_2$ 
such that $\bar\tau_1=\bar\tau_2$ with probability $1-\delta$.
Let $\bar\Xi'$ evolve so that its first branching event 
occurs at time $\bar\tau_1$.
On the event that $\bar\tau_1=\bar\tau_2$ set $\bar\Xi''=\bar\Xi'$, 
and on the event that $\bar\tau_1\neq\bar\tau_2$ let $\bar\Xi''$ 
evolve independently of $\bar\Xi'$ with its first branching event 
occurring at time $\bar\tau_2$.
Then on the event that $\bar\tau_1=\bar\tau_2$ 
we have $\bar\Xi'=\bar\Xi''$ at all times.
Note that it is possible that either
$\tau_1$, $\tau_2$ or both are $\infty$, but this
 does not pose a problem.
  
Therefore, it remains to show that
\begin{equation}\label{eq:TV}
d_{\rm TV}(\tau_1,\tau_2) \leq e^{-1}.
\end{equation}
The random variables $\tau_1$ and $\tau_2$ 
are the first and second arrival times of an inhomogeneous 
Poisson process on $[0,\infty)$ with (random) intensity measure $\mu(t)$.
Let $(N(t))_{t\ge0}$ denote the corresponding Poisson process.
We show that~\eqref{eq:TV} is true uniformly 
when conditioning on $\mu(t)$, so also on average.
  
Given $\mu(t)$ we have that $N(t)$ is a 
time change of a homogeneous Poisson process, 
possibly stopped at some finite time.
Specifically, let $\phi(t) = \int_0^t\mu(s)\,ds$.
Then $N(\phi^{-1}(t))$ is a standard Poisson process, 
with intensity 1 on $[0,\phi(\infty))$.
Thus it suffices to consider a homogeneous 
Poisson process on $[0,a)$ with $a$ possibly $\infty$, 
and bound the total variation distance between the first and second arrivals, 
again denoted $\tau_1,\tau_2$.
If $a=\infty$, then $\tau_1$ has density $e^{-x}$ 
and $\tau_2$ has density $xe^{-x}$ on $\R_+$.
If $a<\infty$, then the same laws are truncated at $a$ 
with the remaining probability being an atom at $\infty$.
The ratio between the conditional densities of 
$\tau_1$ and $\tau_2$, given $(\mu(s))_{s\ge0}$, 
is a decreasing function, 
so the set $A$ maximizing $\P(\tau_2\in A)-\P(\tau_1\in A)$ is 
of the form $[c,\infty]$, for some $c$.
We have 
\begin{align*}
\P(\tau_1 \ge t) &= \P(N(t)=0) = e^{-t} \\
\P(\tau_2 \ge t) &= \P(N(t)\le 1) = (1+t)e^{-t}.
\end{align*}
It follows that the total variation distance 
$\delta$ between $\tau_1$ and $\tau_2$ can be expressed as
\[
\sup_{t\ge0}\Big(\P(\tau_2\ge t)-\P(\tau_1\ge t)\Big)
= \sup_{t\ge0}\; te^{-t} = e^{-1},
\]
as required. (If part of the measure is transferred to $\infty$, 
the total variation distance can only become smaller.)
\end{proof}

\subsection{Proof of Proposition \ref{prop:equality}}

Note that Proposition \ref{prop:equality} contains a statement 
regarding the martingale differences 
$(W_\lambda^+(t)-W_\lambda^-(t))_{t\ge0}$ 
of the conservative coupling from Section \ref{ssec:coupling}.
However, in order to prove this statement we shall work 
with the enhanced coupling construction of Section \ref{ssec:refined}.
As such, the first step is to identify the analogue of the 
martingale differences in this enhanced  construction.

Fix $\lambda\in(-\sqrt{2},\sqrt{2})$. 
Let $\chi$ be the random variable that indicates the 
sign of the active particle drawn at time $\tau_1$. 
That is, $\chi=+1$ if this
particle is positive, $\chi=-1$ if negative, and if $\tau_1=\infty$ 
(in which case, no such 
particle is ever drawn) then set $\chi=0$.
For $t\ge0$, let
\begin{align*}
Z'_\lambda(t)&:=e^{-\hat\lambda t}\int e^{\lambda x}\,d\big[\Xi_t^++\hat\Xi_t^+-\Xi_t^--\hat\Xi_t^-\big](x),\\
Z''_\lambda(t)&:=e^{-\hat\lambda t}\int e^{\lambda x}\,d\big[\Xi_t^+-\Xi_t^--\chi\cdot\hat\Xi_t^\circ\big](x).
\end{align*}
By Lemma \ref{lma:processes}, 
it follows that $(Z'_\lambda(t))_{t\ge0}$ and 
$(W_\lambda^+(t)-W_\lambda^-(t))_{t\ge0}$ are equal in distribution, 
and hence by \eqref{eq:conservative_limits} that the limit 
$Z'_\lambda:=\lim_{t\to\infty}Z'_\lambda(t)$ exists almost surely.
In particular,
\begin{equation}\label{eq:Z_distr}
Z'_\lambda
\stackrel{d}{=}
W_\lambda^+-W_\lambda^-.
\end{equation}
Again, by Lemma \ref{lma:processes}, 
either $\tau_1=\infty$ and $Z''_\lambda(t)=0$ for all large $t$, 
or $Z''_\lambda(t)$ evolves from time $\tau_1$ onwards 
again according to the martingale differences of a version of ABBM.
In particular, the limit 
$Z''_\lambda:=\lim_{t\to\infty}Z''_\lambda(t)$ exists almost surely, 
although its not distributed as $W_\lambda^+-W_\lambda^-$.

Next, we introduce notation for the additive martingale associated with 
the marked particles. 
For $t\ge0$, we let
\[
\hat W_\lambda(t)
:=
e^{-\hat\lambda t}\int e^{\lambda x}\,d\big[\hat\Xi_t^++\hat\Xi_t^-+\hat\Xi_t^\circ\big](x).
\]
By Lemma \ref{lma:processes}, on the event that $\tau_1<\infty$, 
this is the additive martingale for a BBM.
Since, by Proposition \ref{prop:mart_lim}, 
the latter almost surely converges to a positive value, 
it follows that
\[
\hat W_\lambda:=\lim_{t\to\infty}\hat W_\lambda(t)
\] 
exists almost surely, 
and that
\begin{equation}\label{eq:W-hat}
\P\big(\hat W_\lambda=0,\tau_1<\infty\big)
=0.
\end{equation}

Recall that one of $\hat\Xi_t^+$ and $\hat\Xi_t^-$ (possibly both) 
will be zero for all $t$.
By the definitions, for all $t\ge0$ we have
\begin{equation}\label{eq:coupling_id}
Z'_\lambda(t) 
= Z''_\lambda(t) + \chi\cdot\hat W_\lambda(t).
\end{equation}
Combining \eqref{eq:W-hat} and~\eqref{eq:coupling_id} 
it follows that
\begin{equation}\label{E_ZZ0}
\P\big(Z'_\lambda=Z''_\lambda=0,\tau_1<\infty\big)
=0.
\end{equation}

Let $B'_\lambda$ be the event that $\Xi'$ survives 
and $Z'_\lambda=0$, and similarly $B''_\lambda$ 
for $\Xi''$ and $Z''_\lambda$.
Since either process surviving implies $\tau_1<\infty$, 
it follows using \eqref{E_ZZ0} that
\begin{equation}\label{eq:Z_bound1}
\P(B'_\lambda) + \P(B''_\lambda) 
\le 1.
\end{equation}
Moreover, by Lemma \ref{lma:distance}, we have that
\begin{equation}\label{eq:Z_bound2}
| \P(B'_\lambda) - \P(B''_\lambda)| 
\le e^{-1}.
\end{equation}
Combining~\eqref{eq:Z_bound1} and~\eqref{eq:Z_bound2} 
with~\eqref{eq:Z_distr}, we conclude that for any non-trivial 
initial configuration $(\Xi_0^+,\Xi_0^-)$, we have
\begin{equation}\label{eq:limit_bound}
\P\big(\{W_\lambda^+=W_\lambda^-\} \cap  \mathcal{S}\big)
= \P\big(B'_\lambda\big) \le \frac{1+e^{-1}}{2}.
\end{equation}

Finally, to conclude the proof, let $(\mathcal{F}_t)_{t\ge0}$ 
denote the filtration in which $\mathcal{F}_t$ 
is the $\sigma$-algebra generated by $\{(\Xi_s^-,\Xi_s^+):0\le s\le t\}$.
By L\'evy's 0--1 law we have, almost surely, that
\[
\lim_{t\to\infty}\P\big(\{W_\lambda^+=W_\lambda^-\}\cap \mathcal{S}\big|\mathcal{F}_t\big)
={\bf 1}_{\{W_\lambda^+=W_\lambda^-\}\cap \mathcal{S}}.
\]
Moreover, by the Markov property of the process, 
it follows from~\eqref{eq:limit_bound} that, almost surely,
\[
\P\big(\{W_\lambda^+=W_\lambda^-\}\cap \mathcal{S}\big|\mathcal{F}_t\big)
=\P\big(\{W_\lambda^+=W_\lambda^-\}\cap \mathcal{S}\big|(\Xi_t^-,\Xi_t^+)\big)\le\frac{1+e^{-1}}{2}.
\]
Therefore $\{W_\lambda^+=W_\lambda^-\}\cap \mathcal{S}$ 
can only occur on an event of measure zero, 
which completes the proof of Proposition \ref{prop:equality} for the additive martingales.

\medskip

For the derivative martingales, the exact same argument works, except that we replace the differences $Z'_\lambda$ and $Z''_\lambda$ for the derivative martingales:
\begin{align*}
Z'(t)&:=e^{-2t}\int (\sqrt2 t - x) e^{\sqrt2 x} \, d\big[\Xi_t^++\hat\Xi_t^+-\Xi_t^--\hat\Xi_t^-\big](x),\\
Z''(t)&:=e^{-2t}\int (\sqrt2 t - x) e^{\sqrt2 x} \, d\big[\Xi_t^+-\Xi_t^--\chi\cdot\hat\Xi_t^\circ\big](x).
\end{align*}
As before, on the event $\tau_1<\infty$ we cannot have that $Z'$ and $Z''$ both tend to 0.
However, the total variation distance between them is bounded away from 1, and on the event that $D^+_\infty = D^-_\infty$, the probability that $Z\to0$ tends to 1 by Levy's 0--1 law.
\qed

\begin{remark}
The bound of $(1+e^{-1})/2$ in~\eqref{eq:limit_bound} is weaker 
(but sufficient) than the bound of $1/2$ obtained
in \cite{AGJM}.
In the context of \cite{AGJM} the two events have the same probability.
It is possible to improve our bound as follows: 
instead of suppressing the first branching event, 
we could randomly suppress 
one of the first $k$ events.
The total variation distance between the 
Poisson process and the slightly thinned Poisson process 
 obtained in this way is $O(k^{-1/2})$.
\end{remark}

\begin{remark}
Although the quantities $W_\lambda^+$ (resp.\ $W_\lambda^-$) 
capture the number of positive and neutral (resp.\ negative and neutral) 
particles with asymptotic speed $\lambda$, 
it follows from Proposition \ref{prop:equality} that, 
when $W_\lambda^+>W_\lambda^-$, 
there are positive particles with asymptotic speed $\lambda$, 
and these particles comprise a positive fraction of all particles 
with asymptotic speed $\lambda$, 
in the sense that their contribution to $W_\lambda^+(t)$, 
for all large $t$, is bounded away from zero.
\end{remark}

\section{Limiting speed of the interface}
\label{sec:interface}

In this section, we analyze the 
ABBM interface. 
We first establish the existence of the limiting speed (Proposition \ref{prop:interface}).
We then examine properties of its law (Propositions \ref{prop:continuity} and \ref{prop:support}).
Together these results imply 
our main result, Theorem \ref{thm:interface} above.

\subsection{Existence of the limiting speed}

\begin{prop}\label{prop:interface}
Consider the ABBM started from a finite, 
non-trivial and ordered initial configuration.
On the event of coexistence, the limit 
\[
\lambda_*
:= \lim_{t\to\infty} \frac{I(t)}{t}
\] 
exists almost surely.
\end{prop}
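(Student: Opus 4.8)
The plan is to leverage the machinery already built up in the paper, principally Proposition~\ref{prop:mart_con} (the witness of activity) and Proposition~\ref{prop:equality} (non-equality of martingale limits), together with the conservative coupling of Section~\ref{ssec:coupling}. The key dictionary is the following: by Proposition~\ref{prop:mart_con}, an inequality $W_\lambda^+ > W_\lambda^-$ forces positive particles to be present within distance $\alpha_t$ of $\lambda t$ for all large $t$, whereas $W_\lambda^+ < W_\lambda^-$ forces negative particles to be present near $\lambda t$. Since the configuration is ordered (negatives to the left, positives to the right), the presence of a positive particle near $\lambda t$ means $I_+(t) \le \lambda t + \alpha_t$, hence $\limsup_t I(t)/t \le \lambda$; symmetrically, the presence of a negative particle near $\lambda t$ gives $\liminf_t I(t)/t \ge \lambda$. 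So each value of $\lambda$ for which the martingale limits are unequal produces either an asymptotic upper or lower bound on $I(t)/t$, and the sign of $W_\lambda^+ - W_\lambda^-$ determines which.

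The core of the argument is then a monotonicity-and-continuity observation in $\lambda$. Define, on the event of coexistence, the random sets $\Lambda^+ = \{\lambda \in (-\sqrt2,\sqrt2) : W_\lambda^+ > W_\lambda^-\}$ and $\Lambda^- = \{\lambda : W_\lambda^+ < W_\lambda^-\}$. First I would establish that $W_\lambda^+ - W_\lambda^-$ is, as a function of $\lambda$, essentially monotone in sign in the sense that $\Lambda^+$ is an interval reaching up to $\sqrt2$ and $\Lambda^-$ is an interval reaching down to $-\sqrt2$; intuitively, larger $\lambda$ weights particles further to the right, favoring the positive (right) population. Concretely, since $W_\lambda^\pm = \int e^{\lambda x}\,d(\Xi^\pm + \Xi^\circ)$ in the limit, the ratio $W_\lambda^+/W_\lambda^-$ should be monotone in $\lambda$ by a Cauchy--Schwarz/Chebyshev correlation-inequality type argument applied to the two measures, so the sign of the difference changes at most once. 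This identifies a single threshold $\lambda_*$ separating $\Lambda^-$ from $\Lambda^+$. By Proposition~\ref{prop:equality}, for each fixed $\lambda$ the event $\{W_\lambda^+ = W_\lambda^-\}\cap\mathcal S$ is null, so using a countable dense set of $\lambda$ and Fubini, almost surely on coexistence the two limits are unequal for all rational $\lambda$, and $\lambda_*$ is the supremum of the rationals in $\Lambda^-$ (equivalently the infimum of those in $\Lambda^+$).

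With $\lambda_*$ so defined, I would conclude the existence of the limit of $I(t)/t$ as follows. For any rational $\lambda < \lambda_*$ we have $\lambda \in \Lambda^-$, so negatives appear near $\lambda t$ and hence $\liminf_t I(t)/t \ge \lambda$; letting $\lambda \uparrow \lambda_*$ along rationals gives $\liminf_t I(t)/t \ge \lambda_*$. Symmetrically, for rational $\lambda > \lambda_*$ we get $\limsup_t I(t)/t \le \lambda$, and letting $\lambda \downarrow \lambda_*$ gives $\limsup_t I(t)/t \le \lambda_*$. Hence $\lim_t I(t)/t = \lambda_*$ exists almost surely on coexistence. The same squeeze applied to $I_+(t)$ and $I_-(t)$ separately (each trapped within $\alpha_t = o(t)$ of the same threshold) yields the remark that $I_+(t)/t$ and $I_-(t)/t$ share the limit $\lambda_*$, making the choice of midpoint irrelevant.

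The main obstacle I anticipate is the sign-monotonicity step: proving that $W_\lambda^+ - W_\lambda^-$ changes sign at most once in $\lambda$, so that $\Lambda^-$ and $\Lambda^+$ are genuinely complementary intervals rather than interleaving. The witness-of-activity and non-equality results only control each $\lambda$ individually, and a priori the sign could oscillate, which would preclude a single limiting speed. Establishing the correlation inequality rigorously for the limiting measures $\Xi^\pm + \Xi^\circ$ (which are not independent, being coupled through the shared neutral particles $\Xi^\circ$) is the delicate point; I would look to exploit the ordered structure of the configuration, so that in the limit the positive population is concentrated to the right and the negative to the left, making the log-ratio $\log(W_\lambda^+/W_\lambda^-)$ convex or at least monotone in $\lambda$. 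If a clean monotonicity fails, a fallback is to define $\lambda_* := \sup\{\lambda \in \mathbb Q : W_\lambda^+ < W_\lambda^-\}$ directly and argue that the squeeze still closes, provided one can rule out that $\Lambda^+$ contains rationals below $\Lambda^-$; this reduction is where I expect most of the technical work to concentrate.
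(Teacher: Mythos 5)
Your skeleton is the paper's: use Proposition \ref{prop:mart_con} as a dictionary between the sign of $W_\lambda^+-W_\lambda^-$ and the presence of positive/negative particles near $\lambda t$, apply Proposition \ref{prop:equality} over a countable dense set of $\lambda$, and squeeze $I(t)/t$. But as written there is a gap at precisely the step you flag as the ``main obstacle,'' and your primary route for closing it would fail. The limits $W_\lambda^\pm$ are \emph{not} of the form $\int e^{\lambda x}\,d\mu(x)$ for any limiting measure $\mu$: they are limits of $e^{-\hat\lambda t}\int e^{\lambda x}\,d(\Xi_t^\pm+\Xi_t^\circ)(x)$, where the normalization depends on $\lambda$ and the particle cloud grows and spreads, so there is no fixed measure against which to run a Chebyshev/Cauchy--Schwarz correlation argument, and the claimed monotonicity of $W_\lambda^+/W_\lambda^-$ in $\lambda$ has no foundation as stated. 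The good news is that no analytic input of this kind is needed: no-interleaving is immediate from ingredients you already have. If $\lambda\in\Lambda^+$ and $\lambda'\in\Lambda^-$ with $\lambda<\lambda'$, then by Proposition \ref{prop:mart_con} there are, for all large $t$, positive particles in $S_\lambda(t)$ and negative particles in $S_{\lambda'}(t)$; since $\alpha_t=o(t)$, the interval $S_\lambda(t)$ eventually lies entirely to the left of $S_{\lambda'}(t)$, so a positive particle sits strictly to the left of a negative one, contradicting the order-preserving property. (Equivalently, your own first-paragraph bounds give $\limsup I(t)/t\le\lambda<\lambda'\le\liminf I(t)/t$, absurd on coexistence where both are finite.) This is exactly the paper's argument, so your fallback definition $\lambda_*=\sup\{\lambda\in\Q: W_\lambda^+<W_\lambda^-\}$ closes with no further technical work.

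A second, smaller omission: your squeeze needs rationals on both sides of $\lambda_*$, which fails when $\lambda_*=\pm\sqrt{2}$, and this event cannot be excluded at this stage (ruling it out is the separate, harder Proposition \ref{prop:no_sqrt2}, which requires the derivative martingales). If, say, every rational $\lambda\in(-\sqrt{2},\sqrt{2})$ lies in $\Lambda^-$, you obtain $\liminf I(t)/t\ge\sqrt{2}$ but no matching upper bound from the dictionary, since $W_\lambda^\pm=0$ for $\lambda\ge\sqrt{2}$. The paper patches this with \eqref{eq:max_bound}: almost surely $M(t)\le\sqrt{2}t$ for all large $t$, hence $I_+(t)\le\sqrt{2}t$ on coexistence, giving $\limsup I(t)/t\le\sqrt{2}$ and so the limit exists and equals $\sqrt{2}$ on that event (symmetrically at $-\sqrt{2}$). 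Including this event (the paper's $\cG_1$), together with the paragraph above, makes your proof coincide with the paper's.
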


\begin{proof}
Consider a finite, non-trivial and ordered initial configuration of particles, 
in which the rightmost negative particle is (strictly) to 
the left of the leftmost positive particle.
Let $\mathcal{C}$ denote the event of coexistence, 
i.e., that there are both positive and negative particles present at all times.
According to Theorem \ref{thm:coexistence}, 
$\mathcal{C}$ occurs with positive probability.

Let $\cG_1$ denote the event that for all large $t$, 
all particles present in the system are located in 
the interval $[-\sqrt{2}t,\sqrt{2}t]$.
The argument leading to~\eqref{eq:max_bound} 
shows that $\P(\cG_1)=1$.
On the event $\cC\cap \cG_1$ the interface is 
well-defined and finite at all times, and satisfies, 
for all large enough $t$, 
\begin{equation}\label{eq:interface_bound}
 -\sqrt{2} t \leq I(t) \le \sqrt{2} t. 
 \end{equation}

Let $\cG_2$ denote the event that 
$W_\lambda^+\neq W_\lambda^-$ 
for all rational $\lambda\in(-\sqrt{2},\sqrt{2})$.
Proposition \ref{prop:equality} and countable additivity shows that $\P(\cG_2|\cC)=1$.
Define the sets
\begin{align}
E^+ &:= \big\{\lambda\in\Q : W^+_\lambda > W^-_\lambda \big\},\label{eq:Dpm1}  \\ 
E^- &:= \big\{\lambda\in\Q : W^+_\lambda < W^-_\lambda \big\}.\label{eq:Dpm2}
\end{align}
Then on $\cG_2$, 
we have $E^+\cup E^- = \Q\cap(-\sqrt{2},\sqrt{2})$.

Recall (see \eqref{eq:S_set} above) that, for any $\eps>0$,  
we defined $\alpha_t = t^{1/2+\eps}$ 
and $S_\lambda(t) = [\lambda t-\alpha_t,\lambda t+\alpha_t]$.
Proposition \ref{prop:mart_con} shows that for 
$\lambda\in E^+$ (resp.\ in $E^-$) 
there are eventually positive (resp.\ negative) 
particles in $S_\lambda(t)$.
The fact that the configuration remains ordered 
implies that there cannot be any $\lambda \in E^+$ 
and $\lambda'\in E^-$ with $\lambda < \lambda'$.
Thus on $\cC\cap\cG_1\cap\cG_2$ 
there is some $\mu\in[-\sqrt{2},\sqrt{2}]$ so that
\[ 
E^- 
= \Q\cap(-\sqrt{2},\mu), \qquad E^+ 
= \Q\cap(\mu,\sqrt{2}).
\]

Recall that $I_+(t)$ is the position of the leftmost positive particle.
If $|\mu| < \sqrt{2}$, then taking a rational $\lambda>\mu$ 
arbitrarily close to $\mu$ we find that there are eventually positive 
particles in $S_\lambda(t)$, 
and so $\limsup I_+(t)/t \leq \mu$.
Similarly, $\liminf I_-(t)/t \geq \mu$.
Since $I_-(t)\leq I_+(t)$ this implies the claim.

If $\mu=\sqrt{2}$, then the bound on $I_-(t)$ is the same, 
and we use the bound $I_+(t) \leq \sqrt{2} t$ which holds on $\cG_1$.
The case $\mu=-\sqrt{2}$ is symmetric. 
\end{proof}

\subsection{Properties of the limiting speed}

Next, we show that the law of the limiting speed has no atoms.
We will first show that there are no atoms 
in the interval $(-\sqrt{2},\sqrt{2})$.
This follows by Proposition \ref{prop:equality} and the 
almost sure continuity of the martingale limit $W_\lambda$ in $\lambda$, 
which is due to Biggins~\cite{Big92}.
Afterwards, we shall rule out atoms at $\pm\sqrt{2}$, 
using the derivative martingale and its relation to the edge-behavior of BBM, 
as discussed in Section \ref{S_DMG}. 

\begin{prop}\label{prop:continuity}
Consider ABBM started from a finite, 
non-trivial and ordered initial configuration.
The law of the limiting speed 
\[
\lambda_* 
= \lim_{t\to\infty}\frac{I(t)}{t}
\]
of the interface has no atoms in $(-\sqrt{2},\sqrt{2})$. 
\end{prop}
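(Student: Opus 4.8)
The plan is to show that for every fixed $\mu\in(-\sqrt{2},\sqrt{2})$ we have $\P(\lambda_*=\mu\mid\cC)=0$, from which the absence of atoms in the open interval follows immediately. Recall from the proof of Proposition \ref{prop:interface} that, on the event $\cC\cap\cG_1\cap\cG_2$, the limiting speed $\lambda_*$ equals the cut point $\mu$ separating $E^-=\Q\cap(-\sqrt{2},\mu)$ from $E^+=\Q\cap(\mu,\sqrt{2})$, i.e.\ the value at which the sign of $W^+_\lambda-W^-_\lambda$ flips. The key idea is that if $\lambda_*=\mu$ with positive probability, then the difference $D_\lambda:=W^+_\lambda-W^-_\lambda$ must change sign precisely at $\mu$, which in particular forces $D_\mu=0$ by continuity. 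But Proposition \ref{prop:equality} tells us that $\P(\{W^+_\mu=W^-_\mu\}\cap\cS)=0$, and since $\cC\subseteq\cS$, the event $\{D_\mu=0\}\cap\cC$ has probability zero. This is the heart of the matter.

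To make this rigorous, first I would invoke the almost sure continuity of $\lambda\mapsto W_\lambda$ on $(-\sqrt{2},\sqrt{2})$ established by Biggins \cite{Big92}. By Lemma \ref{lem:additivity}, each of $W^+_\lambda$ and $W^-_\lambda$ is an a.s.\ finite linear combination (with coefficients $e^{\lambda a_k}$ that are themselves continuous in $\lambda$) of independent standard martingale limits, each continuous in $\lambda$; hence $\lambda\mapsto D_\lambda$ is almost surely continuous on $(-\sqrt{2},\sqrt{2})$. Let $\cG_3$ be the (probability one) event on which this continuity holds. Now fix $\mu\in(-\sqrt{2},\sqrt{2})$ and work on $\cC\cap\cG_1\cap\cG_2\cap\cG_3$. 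On this event, the characterization from Proposition \ref{prop:interface} gives $D_\lambda>0$ for all rational $\lambda\in(\mu,\sqrt{2})$ and $D_\lambda<0$ for all rational $\lambda\in(-\sqrt{2},\mu)$ whenever $\lambda_*=\mu$. Taking rational sequences $\lambda_n\downarrow\mu$ and $\lambda_n'\uparrow\mu$ and using continuity of $D$ at $\mu$, we conclude $D_\mu\ge0$ and $D_\mu\le0$, hence $D_\mu=0$.

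Thus $\{\lambda_*=\mu\}\cap\cC\cap\cG_1\cap\cG_2\cap\cG_3\subseteq\{D_\mu=0\}\cap\cC$. Since $\cC\subseteq\cS$ and $\P(\{W^+_\mu=W^-_\mu\}\cap\cS)=0$ by Proposition \ref{prop:equality}, the right-hand event is null. As $\cG_1,\cG_2,\cG_3$ each have (conditional) probability one given $\cC$, it follows that $\P(\{\lambda_*=\mu\}\cap\cC)=0$, and therefore $\P(\lambda_*=\mu\mid\cC)=0$ for every $\mu\in(-\sqrt{2},\sqrt{2})$. This is exactly the statement that the law of $\lambda_*$ has no atoms in the open interval.

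The main obstacle I anticipate is the continuity step: one must verify that $\lambda\mapsto W^\pm_\lambda$ is genuinely a.s.\ continuous at the specific point $\mu$, not merely continuous away from a $\mu$-dependent null set. This is handled cleanly by Biggins' result, which gives continuity (indeed analyticity) of the \emph{standard} martingale limit $\lambda\mapsto W_\lambda$ on all of $(-\sqrt{2},\sqrt{2})$ simultaneously on a single probability-one event; the additivity structure of Lemma \ref{lem:additivity} then transfers this to $D_\lambda$ on a single good event $\cG_3$ independent of $\mu$. With $\cG_3$ fixed in advance, the sign-flip argument goes through for each $\mu$ without any measurability subtleties. Everything else—the sign characterization of $E^\pm$ and the vanishing of $\{W^+_\mu=W^-_\mu\}\cap\cS$—is already in hand from Propositions \ref{prop:interface} and \ref{prop:equality}.
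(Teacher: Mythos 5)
Your proposal is correct and follows essentially the same route as the paper: the sign characterization of $E^\pm$ from Proposition \ref{prop:interface}, almost sure continuity of $\lambda\mapsto W^+_\lambda-W^-_\lambda$ via Biggins \cite{Big92} to force $W^+_\mu=W^-_\mu$ on $\{\lambda_*=\mu\}\cap\cC$, and then Proposition \ref{prop:equality} (with $\cC\subseteq\cS$) to conclude this event is null. Your version merely spells out the good events and the transfer of continuity to general initial configurations in more detail than the paper does.
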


\begin{proof}
Fix $\mu\in(-\sqrt{2},\sqrt{2})$, 
and let $\cC$ denote the event of coexistence.
Towards a contradiction, 
suppose that $\P(\{\lambda_* = \mu \}\cap\cC)>0$.
Define the sets $E^\pm$ as in \eqref{eq:Dpm1}
and \eqref{eq:Dpm2}. 
Then, on the event $\lambda_*=\mu$, we have that 
$E^- = \Q\cap(-\sqrt{2},\mu)$ and $E^+ = \Q\cap(\mu,\sqrt{2})$.
Since the difference $W_\lambda^+-W_\lambda^-$ 
is almost surely continuous in $\lambda$ (its analytic; see~\cite{Big92}), 
we would have $W^+_\mu = W^-_\mu$. 
However, by Proposition \ref{prop:equality}, this has probability $0$. 
\end{proof}

This approach fails when $\mu=\pm\sqrt{2}$, 
since $W^\pm_{\pm\sqrt{2}} = 0$.
The following proposition completes the 
proof that $\lambda_*$ has no atoms.

\begin{prop}\label{prop:no_sqrt2}
  Consider ABBM started from a finite, 
  non-trivial and ordered initial configuration.
  Then the probability of coexistence 
  with limiting speed $\lambda_* = \pm\sqrt{2}$ is $0$.
\end{prop}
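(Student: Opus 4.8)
By symmetry it suffices to rule out coexistence with $\lambda_*=\sqrt2$. The plan is to show that on the event $\{\lambda_*=\sqrt2\}\cap\cC$ the two derivative martingale limits of the conservative coupling coincide, i.e.\ $D_\infty^+=D_\infty^-$, which by Proposition \ref{prop:equality} (recall $\cC\subseteq\mathcal{S}$) has probability zero. I would establish this equality by sandwiching the two limits from opposite sides: a \emph{bulk} comparison of the additive martingales, fed into Madaule's theorem, gives $D_\infty^+\le D_\infty^-$, while an \emph{edge} comparison of the maximal displacements, fed into the Arguin--Bovier--Kistler theorem, gives $D_\infty^+\ge D_\infty^-$.

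For the upper bound I would work on the almost sure event $\cG_2$ from the proof of Proposition \ref{prop:interface}. On $\{\lambda_*=\sqrt2\}\cap\cC\cap\cG_2$ the threshold $\mu=\lambda_*$ separating $E^-=\Q\cap(-\sqrt2,\mu)$ from $E^+=\Q\cap(\mu,\sqrt2)$ equals $\sqrt2$, so $E^+=\emptyset$ and hence $W_\lambda^+<W_\lambda^-$ for every rational $\lambda\in(-\sqrt2,\sqrt2)$. By the almost sure continuity (indeed analyticity) of $\lambda\mapsto W_\lambda^\pm$ this upgrades to $W_\lambda^+\le W_\lambda^-$ for all $\lambda\in(-\sqrt2,\sqrt2)$. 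Dividing by $\sqrt2-\sqrt\lambda>0$ and letting $\lambda\uparrow\sqrt2$, Theorem \ref{thm:madaule}, applied to each of the single-type BBMs $(\Xi_t^++\Xi_t^\circ)_{t\ge0}$ and $(\Xi_t^-+\Xi_t^\circ)_{t\ge0}$, yields $D_\infty^+\le D_\infty^-$.

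For the lower bound, let $M^+(t)$ and $M^-(t)$ denote the positions of the rightmost particles of the BBMs $(\Xi_t^++\Xi_t^\circ)_{t\ge0}$ and $(\Xi_t^-+\Xi_t^\circ)_{t\ge0}$. Since the configuration remains ordered and, on $\cC$, both types are present for all large $t$, the rightmost unmarked negative particle lies strictly to the left of the rightmost positive particle; as the neutral particles are common to both processes, a short case analysis on whether the rightmost neutral particle lies ahead of or behind these two shows that $M^+(t)\ge M^-(t)$ for all large $t$. Writing $Q^\pm(t):=M^\pm(t)-\sqrt2\,t+\tfrac{3}{2\sqrt2}\log t$, this gives $Q^+(t)\ge Q^-(t)$ and hence $F_T^+(x)\le F_T^-(x)$ for the corresponding empirical distribution functions; passing to the limit in Theorem \ref{T_ABK} gives $\exp(-CD_\infty^+e^{-\sqrt2x})\le\exp(-CD_\infty^-e^{-\sqrt2x})$, that is $D_\infty^+\ge D_\infty^-$. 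Combining the two bounds yields $D_\infty^+=D_\infty^-$ on $\{\lambda_*=\sqrt2\}\cap\cC$, so this event is null by Proposition \ref{prop:equality}.

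I expect the main obstacle to be the edge comparison. Two points need care: first, justifying $M^+(t)\ge M^-(t)$ cleanly despite the neutral particles being shared between the two BBMs (the ordering only controls the unmarked positive and negative particles directly, so the role of the common rightmost neutral particle must be handled in the case analysis); and second, verifying that Theorem \ref{T_ABK}, stated for standard BBM, applies to $(\Xi_t^\pm+\Xi_t^\circ)_{t\ge0}$ started from finite, non-trivial configurations. The latter should follow from the branching property, with $D_\infty^\pm$ the appropriate (non-standard) derivative martingale limits, but it deserves an explicit statement.
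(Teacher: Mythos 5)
Your proposal is correct and is essentially the paper's own argument: both proofs combine the bulk comparison $W^+_\lambda<W^-_\lambda$ fed through Madaule's theorem (Theorem \ref{thm:madaule}), the Arguin--Bovier--Kistler theorem (Theorem \ref{T_ABK}) together with order preservation, and Proposition \ref{prop:equality} for the derivative martingales. The only difference is bookkeeping: the paper first upgrades to the strict inequality $D^+_\infty<D^-_\infty$ and then contradicts ordering via a positive density of times at which the rightmost negative particle overtakes the rightmost positive one, whereas you run the same ordering argument in contrapositive form to get $D^+_\infty\ge D^-_\infty$ and then contradict Proposition \ref{prop:equality}.
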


\begin{proof}
  We focus on the event of coexistence with limiting speed 
  $\lambda_* = \sqrt{2}$, showing it has probability 0.
  The case of $-\sqrt{2}$ follows by symmetry.

  By Proposition \ref{prop:equality}, almost surely, 
  for every $\lambda_n = \sqrt{2}-n^{-1}$ 
  we have $W^+_{\lambda_n}\neq W^-_{\lambda_n}$, 
  and additionally $D^+_\infty \neq D^-_\infty$. 
  If, for some $\lambda_n<\sqrt2$, 
  we have $W^+_{\lambda_n} > W^-_{\lambda_n}$ 
  then the limit speed satisfies $\lambda_* \leq \lambda_n$.
  Thus $\lambda_*=\sqrt{2}$ implies 
  $W^+_{\lambda_n} < W^-_{\lambda_n}$ for all the above $\lambda_n$.

  This implies that, on the event $\lambda_*=\sqrt2$, 
  we have that the derivatives $\frac{d}{d\lambda} W^+_\lambda \leq \frac{d}{d\lambda} W^-_\lambda$, 
  when evaluated at $\lambda=\sqrt{2}$.
  By Theorem \ref{thm:madaule} it follows that $D^+_\infty \leq D^-_\infty$, 
  and since the two are unequal, this is strengthened to $D^+_\infty < D^-_\infty$.

Next, we apply Theorem \ref{T_ABK} to the positive and negative particles with $x=0$.
  Let $M^\pm(t)$ denote the location of the rightmost positive/negative particle 
  (possibly the same if they form a neutral particle).
 Put 
 \[
 m(t) = \sqrt2t-\frac{3}{2\sqrt2}\log t.
 \] 
 By Theorem \ref{T_ABK}, 
 the fraction of times in $[0,T]$ for which $M^\pm(t) \leq m(t)$ converges to 
  $\exp(-CD^\pm_\infty)$.
Since $D^+_\infty < D^-_\infty$,  
there is a positive asymptotic density of times at which 
$M^-(t) > m(t)$ and $M^+(t) \leq m(t)$.
 However, even if there is one such time, 
 then the rightmost negative particle has overtaken the rightmost positive particle, 
 thereby ruling out coexistence (since  ABBM is order preserving).
\end{proof}

\medskip

Finally, we show that the limiting speed is fully supported on 
$[-\sqrt2,\sqrt2]$. 
To show this, we construct a specific configuration 
for which $\lambda_*$ is highly likely to be in the 
vicinity of a specific $\lambda\in(-\sqrt2,\sqrt2)$.
This is similar to the proof of Theorem \ref{thm:multi_coex}
in Section \ref{S_multicoex} above, 
where we construct a configuration where each type 
is likely to survive near some $\lambda_i t$. 

\begin{prop}\label{prop:support}
Consider ABBM started from a finite, 
non-trivial and ordered initial configuration. 
The law of the limiting speed 
\[
\lambda_* 
= \lim_{t\to\infty}\frac{I(t)}{t}
\]
of the interface is fully supported on $(-\sqrt{2},\sqrt{2})$. 
\end{prop}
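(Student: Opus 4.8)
The plan is to show that for every $\lambda_0\in(-\sqrt2,\sqrt2)$ and every $\eps>0$ we have $\P(\{\lambda_*\in(\lambda_0-\eps,\lambda_0+\eps)\}\cap\cC)>0$; as the support of a law is closed, this places all of $(-\sqrt2,\sqrt2)$ in the support. First I would reduce to a convenient initial configuration: by the Markov property and the fact that, from any finite, non-trivial, ordered configuration, there is positive probability of evolving over a bounded time (with no cross-annihilation, so that order is preserved) into any prescribed finite ordered configuration, it suffices to prove the claim from a two-block configuration, with $n_-$ negative particles within distance $\eta$ of a point $a_-$ and $n_+$ positive particles within distance $\eta$ of a point $a_+>a_-$, all of whose parameters we are free to choose.

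The driving idea is the characterization of $\lambda_*$ from Proposition \ref{prop:interface}: on coexistence $\lambda_*$ is precisely the point at which $\lambda\mapsto W_\lambda^+-W_\lambda^-$ changes sign, with $W^+_\lambda>W^-_\lambda$ for rational $\lambda>\lambda_*$ and $W^+_\lambda<W^-_\lambda$ for rational $\lambda<\lambda_*$. I would therefore engineer the configuration so that this sign change is forced near $\lambda_0$. By Lemma \ref{lem:additivity}, $W_\lambda^\pm$ is distributed as a sum over the positions of the corresponding block of terms $e^{\lambda x_k}W_\lambda^{(k)}$ with IID standard limits of unit mean (Proposition \ref{prop:mart_lim}), so up to a bounded factor coming from the block width $\eta$ we have $\E W^\pm_\lambda\approx n_\pm e^{\lambda a_\pm}$. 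Writing $L:=a_+-a_-$, the mean ratio therefore has logarithm
\[
g(\lambda):=\log\frac{\E W^+_\lambda}{\E W^-_\lambda}=\log\frac{n_+}{n_-}+\lambda L+O(\eta),
\]
a linear function of $\lambda$ whose root I place at $\lambda_0$ by imposing $\log(n_-/n_+)=\lambda_0 L$, so that $g(\lambda)\approx(\lambda-\lambda_0)L$. I would then fix rationals $\lambda_2\in(\lambda_0-\eps,\lambda_0)$ and $\lambda_1\in(\lambda_0,\lambda_0+\eps)$ and choose integer counts $n_\pm$, both large, with $\log(n_-/n_+)=\lambda_0 L$ (the ratio alone is prescribed, leaving a free parameter to make both counts large); taking $L$ large then makes $g(\lambda_1)$ and $-g(\lambda_2)$ as large as desired.

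The final step, carried out exactly as in the multi-type coexistence argument of Section \ref{S_multicoex}, upgrades these mean inequalities to genuine ones with positive probability. Since both blocks are large, the law of large numbers for the IID copies in Lemma \ref{lem:additivity} yields a constant $c\in(0,1)$ so that each of $W^+_{\lambda_1},W^-_{\lambda_1},W^+_{\lambda_2},W^-_{\lambda_2}$ lies within a factor $1/c$ of its mean with probability close to $1$; on the intersection of these four events the inequalities $W^+_{\lambda_1}>W^-_{\lambda_1}$ and $W^+_{\lambda_2}<W^-_{\lambda_2}$ both hold as soon as $\min\{g(\lambda_1),-g(\lambda_2)\}>2\log(1/c)$, which is guaranteed by taking $L$ large. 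On this positive-probability event, $W^+_{\lambda_1}>W^-_{\lambda_1}$ forces positive particles to survive (near speed $\lambda_1$) and $W^+_{\lambda_2}<W^-_{\lambda_2}$ forces negative particles to survive (near speed $\lambda_2$), so coexistence holds; and by the crossover characterization of Proposition \ref{prop:interface} we obtain $\lambda_2<\lambda_*<\lambda_1$, hence $\lambda_*\in(\lambda_0-\eps,\lambda_0+\eps)$. I expect the main obstacle to be pinning down the location of the sign change of $W^+_\lambda-W^-_\lambda$ against the intrinsic randomness of the martingale limits; this is resolved by the two independent levers in the construction, namely large particle counts (concentration of the $W^\pm_\lambda$ about their means) together with a large spatial separation $L$ (a steep, and hence robust, crossover).
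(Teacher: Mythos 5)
Your proposal is correct and follows essentially the same route as the paper: reduce via the Markov property to a two-block configuration, use Lemma \ref{lem:additivity} together with the law of large numbers to force $W^+_\lambda-W^-_\lambda$ to have opposite signs at two rational values bracketing the target speed, and then invoke the crossover characterization from Proposition \ref{prop:interface}. The only difference is parametric — the paper fixes the blocks at $\pm1$ and tunes the ratio of particle counts ($\tfrac12\log(a/b)\approx\lambda$) with carefully chosen test values $\lambda_\eps^{\pm}$, whereas you prescribe the count ratio and make the separation $L$ large to steepen the crossover — which is an equivalent way of arranging the same two levers.
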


\begin{proof}
Let $\lambda\in(-\sqrt{2},\sqrt{2})$ and $\eps>0$ be given.
By symmetry, let us assume that $\lambda>0$. 
Put
\[
\lambda_\eps^-
=\frac{\lambda-\eps}{1+\eps},
\quad\lambda_\eps^+
=\frac{\lambda+\eps}{1-\eps}.
\]
Without loss of generality, 
we may assume that $\eps>0$ 
is small enough so that 
$0<\lambda_\eps^-<\lambda_\eps^+<\sqrt{2}$.
We will show that
\begin{equation}\label{eq:support}
\P\big(\lambda_\eps^-\le \lambda_*\le \lambda_\eps^+\big)
>0.
\end{equation}

First, we describe specific initial configurations for which 
\eqref{eq:support} holds. 
Fix integers $a,b\ge1$ such that 
\begin{equation}\label{E_eps}
\Big|\frac12\log\left(\frac{a}{b}\right)-\lambda\Big|<\eps.
\end{equation}
Let $\cB$ denote the set of configurations consisting of 
$a n$ negative particles positioned within distance 
$\eps/2$ of $-1$ and $bn$ positive particles positioned 
within distance $\eps/2$ of $+1$.
For starting configurations in $\cB$,
we may interpret the martingale limit 
$W_\lambda^+$ as the sum of 
$bn$ independent martingale limits, 
each of which by Lemma \ref{lem:additivity} 
is distributed as $e^{\lambda x_j}\,W_\lambda$, 
where $x_j\in(1-\eps/2,1+\eps/2)$ 
is the position of the $j$th positive particle.

By the law of large numbers, for all large $n$, 
it follows that 
\begin{align*}
\frac{1}{n}W_{\lambda_\eps^+}^+
&\ge be^{\lambda_\eps^+(1-\eps)}\,\E[W_{\lambda_\eps^+}]
=be^{\lambda+\eps}\,\E[W_{\lambda_\eps^+}],\\
\frac{1}{n}W_{\lambda_\eps^-}^+
&\le be^{\lambda_\eps^-(1+\eps)}\,\E[W_{\lambda_\eps^-}]
=be^{\lambda-\eps}\,\E[W_{\lambda_\eps^-}], 
\end{align*}
with probability at least $3/4$. 
Likewise, for any configuration in $\cB$,
we may interpret $W_\lambda^-$ 
as the sum of $an$ independent martingale limits 
of the form $e^{\lambda x_j}\,W_\lambda$, 
for some $x_j\in(-1-\eps/2,-1+\eps/2)$.
Hence, for all large $n$, 
\begin{align*}
\frac{1}{n}W_{\lambda_\eps^-}^-
&\ge ae^{-(\lambda-\eps)}\,\E[W_{\lambda_\eps^-}],\\
\frac{1}{n}W_{\lambda_\eps^+}^-
&\le ae^{-(\lambda+\eps)}\,\E[W_{\lambda_\eps^+}], 
\end{align*}
with probability at least $3/4$. 
Therefore, for any initial configuration in $\cB$, 
we have, for all large $n$, with probability at least $1/2$, that
\begin{align*}
\frac{1}{n}\big[W_{\lambda_\eps^+}^+-W_{\lambda_\eps^+}^-\big]
&\ge \big(1-\frac{a}{b}e^{-2(\lambda+\eps)}\big)be^{\lambda+\eps}\E[W_{\lambda_\eps^+}]>0,\\
\frac{1}{n}\big[W_{\lambda_\eps^-}^+-W_{\lambda_\eps^-}^-\big]
&\le\big(1-\frac{a}{b}e^{-2(\lambda-\eps)}\big)be^{\lambda-\eps}\E[W_{\lambda_\eps^-}]<0,
\end{align*}
where the final inequalities follow by the choice of $a,b$
in \eqref{E_eps}.
Hence, for every initial configuration in $\cB$, 
with probability at least $1/2$, 
the limiting speed of the interface is contained 
in the interval $[\lambda_\eps^-,\lambda_\eps^+]$. 

To conclude, we note that, from any finite, 
non-trivial and ordered initial configuration, 
with all negative particles to the left of all positive particles, 
there is a positive probability that the configuration at time $t=1$ 
is in the set $\cB$. Therefore, since ABBM is Markovian, 
\eqref{eq:support} follows, as claimed. 
\end{proof}

\section{Generalizations and extensions}\label{sec:open}

In this paper, 
we have studied ABBM on the real line.
There are various generalizations 
where similar questions could be studied.
Several problems which arise naturally 
from this work are discussed below.

\subsection{Other annihilating spatial branching processes}

One could consider a whole range of different 
branching and diffusion mechanisms, 
beyond the dyadic branching and diffusive continuous motion of BBM.
Aspects that can be generalized include:

\begin{description}
\item[Branching] Instead of dyadic splitting we could replace each particle, 
after an exponentially distributed lifetime, by a random number of particles 
positioned randomly on $\R$ according to some reproduction law $\mu$ 
(shifted so that the configuration is centered around the position of the 
particle at the time of the branching event).
The simplest of these is branching into two particles without displacement, 
but the number of children could be random, and some or all of them 
could be created at some displaced location.
\item[Movement] Similarly, the movement of the particles in between 
branching events could more generally be modeled by some L\'evy process, 
which could include both discrete and continuous motion.
This includes the discrete random walk on $\Z$, where jumps are $\pm 1$.
\item[Time] We can also consider processes in discrete time, 
where particles branch at each time step, with some displacement 
for the offspring.
\item[Space] Finally, we can also consider the process on other spaces, 
such as, $\Z^d,\R^d$, or on other graphs or manifolds.
Note that in higher dimension, Brownian motions do not collide almost surely, 
and the rule for annihilation might require adaptation.
In discrete spaces, however, no such change is required.
\end{description}  

We refer to a process of this more general class as an 
{\bf annihilating branching random walk} (ABRW).

In many aspects, we expect that many ABRW, 
under mild assumptions, will behave similarly to ABBM.
However, the methods of this paper are not necessarily 
sufficient even in one dimension.
We focus in what follows in the one-dimensional case.

To apply our proof and address the question of coexistence for ABRW, 
the first step is to extend the theory for additive martingales.
Under suitable tail assumptions on the branching and movement, 
it is still the case that for some $\hat\lambda$ 
(which may no longer be $1+\lambda^2/2$) 
we have additive martingales $W_\lambda(t)$.
It is also expected that these converge to a 
continuous function in $\lambda$ on some interval $I\in\R$.
This is known for some settings, 
such as simple branching random walk on $\Z$, 
but seems not to be proved in full generality.

\begin{question}
For which ABRWs does coexistence occur with positive probability?
\end{question}

Having established coexistence, the question of a limit speed for the interface separating positive and negative particles may be more delicate.
For instance, for an ABRW which is not order preserving, the number of interfaces is no longer decreasing as a function of time, which complicates the analysis.
One example of such a model is any process where the particles jump discontinuously in space.
Another is the model with ``soft'' annihilating, where a pair of particles do not annihilate immediately upon collision, but at some exponential rate in terms of the intersection local time of the pair.

\begin{question}
  If an ABRW is not order preserving, is the number of interfaces almost surely finite? Or can there be infinitely many segments of each type?
\end{question}

We say that an ABRW is \textbf{dominating} 
if after a branching event the number of particles at each 
location is at least the number before the branching event.
For instance, this is the case if branching does not cause displacement, 
or if some children are displaced but at least one stays at the parent's location.
The enhanced coupling construction, used to prove the existence of a 
limit speed for the interface, relies on this domination assumption in a central way.
Specifically, after the first branching event, the system contains an 
extra particle (which is marked) but no particles are missing.
In our proof, domination is used to establish that the limit of the 
additive martingales are almost surely distinct.

\begin{question}
  If an ABRW is not dominating, does the limit speed still exist?
\end{question}

We list here some conditions under which our proofs 
and results hold with no essential modifications.
These include, for example, simple random walk on 
$\Z$ in continuous time with dyadic branching.
\begin{itemize} 
\item The additive martingales 
$W_\lambda(t) = e^{-\hat\lambda t}\sum e^{\lambda X_j(t)}$ 
must converge in (almost surely and in $L^1$) 
to a non-zero limit on some interval $I\subset\R$ 
and to $0$ outside $I$.
\item The location of a typical particle at time $t$ (previously $N(0,t)$) 
needs to satisfy a large deviation principle 
$\P(X\approx at) = e^{-I(a)t + o(t)}$.
\item The dominant contribution to $W_\lambda(t)$ 
comes from particles near $x(\lambda) t$, 
where $x(\lambda)$ is the Legendre transform of $I$, 
namely $x(\lambda)$ maximizes $\lambda x - I(x)$.
This is needed for the proofs that as $x(\lambda)$ maps $I$ 
bijectively to the full range of speeds that exist in the (single-type) 
branching random walk.
\item For the enhanced coupling, 
it is necessary that the branching is dominating.
\item For uniqueness of the interface, 
and existence of its limit speed, 
it is needed that the motion is order-preserving.
\item We used the continuity of the martingale 
limit $W_\lambda$ in order to show that the distribution of the random slope
$\lambda_*$ is continuous. 
\end{itemize}

\subsection{Existence of a limiting speed for multiple types}

Our methods do not quite suffice to establish the 
existence of a limiting speed in settings with more than one interface. 
To see what can go wrong, consider ABBM with 
negative particles initially positioned at $\pm1$ and a 
positive particle at the origin. 
By Theorem \ref{thm:multi_coex} there is positive probability that 
descendants of all three particles initially present survive at all times. 
On this event, there are two interfaces between 
positive and negative particles present at all times. 
Our proof of Theorem \ref{thm:interface} was based on being able to 
relate the limiting speed to a root of the equation 
$W_\lambda^+-W_\lambda^-=0$. 
However, in the setting described above, 
our methods do not alone show that, on the event of coexistence, 
the equation has a solution in $(-\sqrt{2},\sqrt{2})$. 

Next, we formalize the above to the multi-type setting. 
Consider an arbitrary initial configuration consisting of a 
finite number of particles of $m\ge2$ different types, 
positioned so that no two particles of different type occupy the same location. 
At each time $t\ge0$ the configuration of particles gives rise to some 
(random) number $K(t)$ of {\bf interfaces}, 
which (as in the two-type case) are defined as the midpoints of 
maximal vacant open intervals of the real line, 
with endpoints occupied by particles of different types. 
On the other hand, we refer to maximal segments of particles 
of the same type as {\bf blocks}. 
Due to the annihilating feature and the order preserving property, 
the number of interfaces (and blocks) can only decrease as time evolves, 
and so $K(t)$ approaches a limit $K$ as $t\to\infty$. 
Let $C_k$ denote the event that the limiting number of interfaces $K=k$.

The evolution from a $m$-type configuration 
can be coupled with the evolution from a two-type configuration, 
by renaming the blocks of the configuration 
positive/negative in an alternating fashion. 
Note that these two processes evolve identically, 
until the number of interfaces decrease. 
At that point, we must restart the coupling 
(renaming the blocks once again). 
In this way, we repeat this procedure at 
each time the number of interfaces decrease, 
which can happen at most a finite number of times. 
As a consequence, the martingale difference 
$W_\lambda^+(t)-W_\lambda^-(t)$ associated with this 
coupled process has an almost sure limit, 
and Proposition \ref{prop:equality} shows that for fixed 
$\lambda$ the difference is almost surely nonzero.

\begin{question}
Is it true, on the event $C_k$, that each of the $k+1$ 
surviving blocks occupy a linear segment of the real line? 
That is, does each give rise to a 
``positive martingale difference'' in an interval of $\lambda$ values?
\end{question}

If the above holds, then existence of $k$ (distinct) 
limit speeds would follow from Proposition \ref{prop:equality},
and the limiting speeds would be distinct.

\subsection{The higher-dimensional setting}

Similar questions to those examined in this work
can also be asked in higher dimensions.
However, already in two dimensions, 
some care is needed in defining the process, 
as particles with zero radius no longer collide.
One way around this problem is to work with an ABRW on a lattice.

Consider, for instance, an ABRW performing 
simple random walk on $\Z^2$ in continuous time. 
By projecting the particles onto the first coordinate axis, 
we can think of the process as a ABRW on $\Z$, 
given by the first coordinate, 
where each particle also remembers its second coordinate.
Particles only annihilate if they meet and their second coordinates also agree.
The various conditions needed to establish coexistence hold in this case, 
implying positive probability of coexistence in this model.
However, this projection is not order-preserving, 
and so understanding the interface is more delicate.
Indeed, it is not clear even how to define the interface, 
and whether this interface will have an asymptotic limit.

\begin{question}
  In two (and higher) dimensions, does the interface separating 
  the two types have an asymptotic limit shape?
  If so, what can be said about the class of shapes to which that limit belongs?
\end{question}

In analogy with other models for competing growth, 
we expect that a typical description in two (or more) 
dimensions is that of the two types roughly occupying 
complementing regions of some scaled deterministic 
limit shape $U\subset\R^d$.
(The set is the sub-level set of the large deviation 
rate function for the random walks.)
Indeed, applying our arguments using the higher 
dimensional additive martingales 
suggests that for almost every $u\in U$, 
the particles near $tu$ are eventually of a fixed type.
Moreover, it seems possible that one of the two types of 
particles is able to survive despite being eventually 
``surrounded'' by particles of the other type, 
in sharp contrast with other spatial models for competing growth~\cite{HP98}.

\begin{figure}
  \centering
  {\includegraphics[width=.4\textwidth]{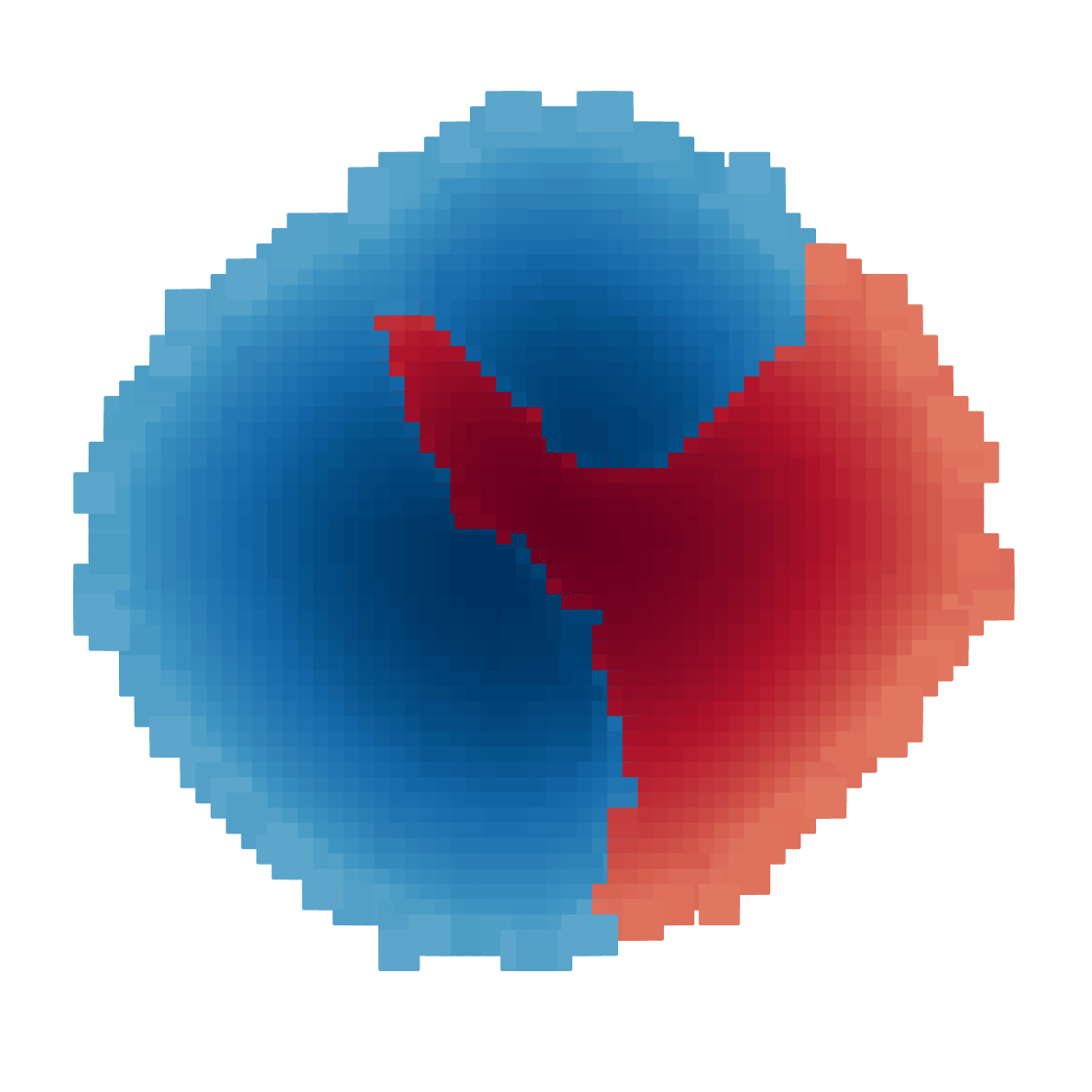}} 
  {\includegraphics[width=.4\textwidth]{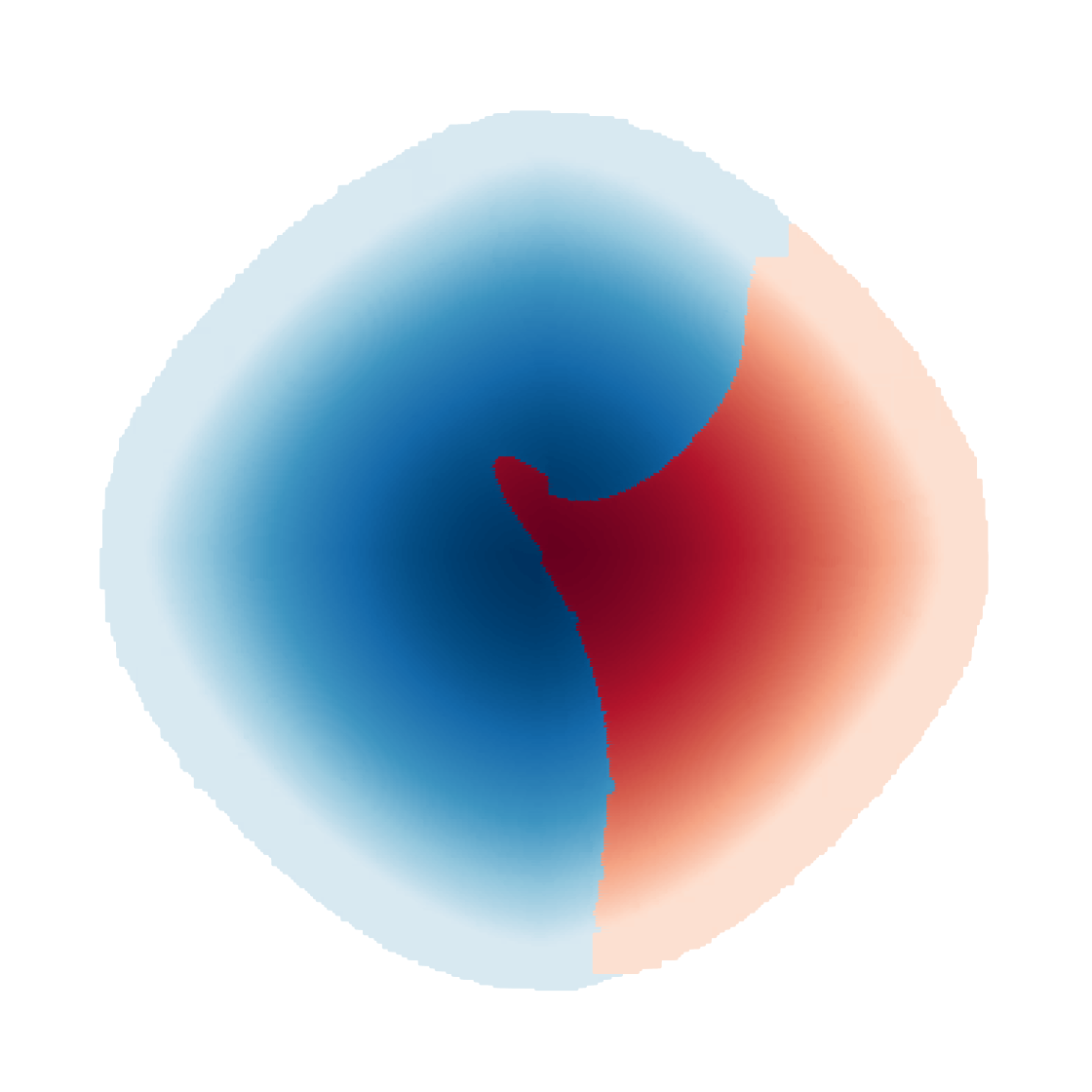}}\\
  {\includegraphics[width=.4\textwidth]{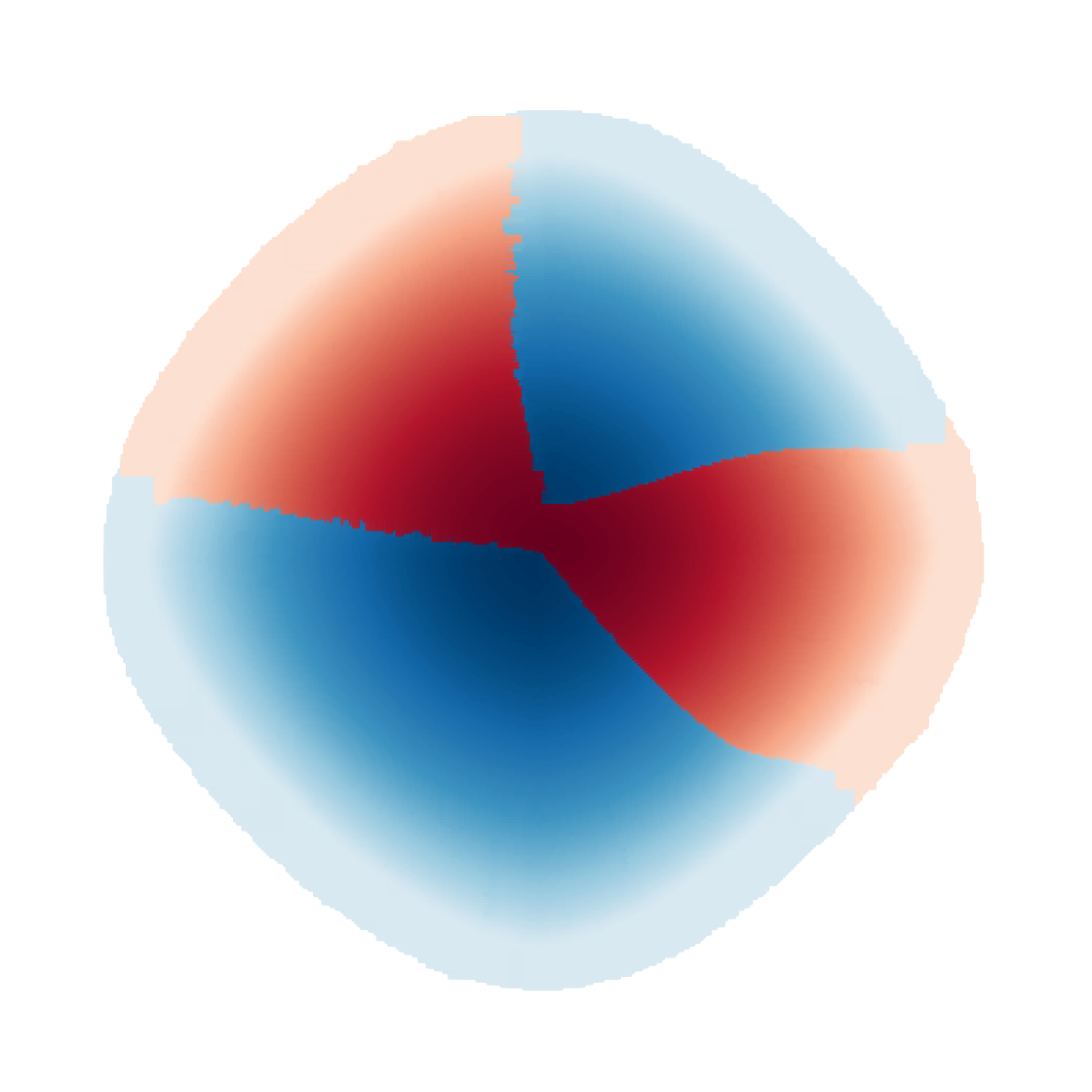}} 
  {\includegraphics[width=.4\textwidth]{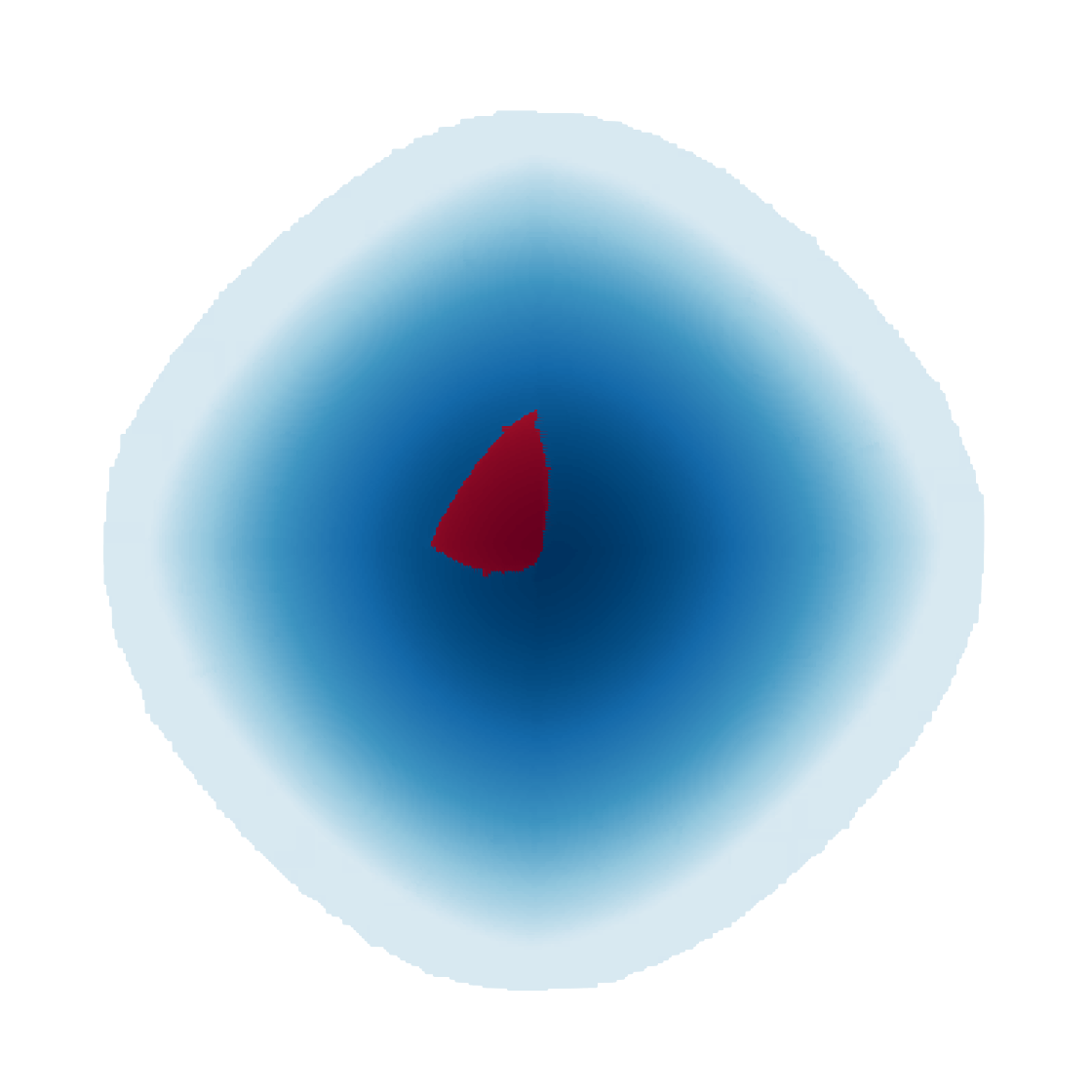}} 
  \caption{Simulations of annihilating branching 
  random walk on $\Z^2$, started with up 2 or 4 particles of each type. 
  The top two are the same process at times 50 and 256.
  The bottom two are distinct runs at time 256.}
  \label{fig:ABRW2d}
\end{figure}

\providecommand{\bysame}{\leavevmode\hbox to3em{\hrulefill}\thinspace}
\providecommand{\MR}{\relax\ifhmode\unskip\space\fi MR }
\providecommand{\MRhref}[2]{%
  \href{http://www.ams.org/mathscinet-getitem?mr=#1}{#2}
}
\providecommand{\href}[2]{#2}

\end{document}